\theoremstyle{plain}
\newtheorem{thm}{Theorem}[section]
\newtheorem{lm}[thm]{Lemma}
\newtheorem{prop}[thm]{Proposition}
\newtheorem{defn}[thm]{Definition}
\newtheorem{prop-def}[thm]{Proposition-definition}
\theoremstyle{plain}
\newtheorem*{thm*}{Theorem}
\newtheorem*{lm*}{Lemma}
\newtheorem*{prop*}{Proposition}
\newtheorem*{cor*}{Corollary}
\newtheorem*{defn*}{Definition}
\newtheorem*{conj*}{Conjecture}
\theoremstyle{definition}
\theoremstyle{remark}
\newtheorem{ex}{\bfseries Example}
\newtheorem{exs}[ex]{\bfseries Examples}
\theoremstyle{remark}
\newtheorem*{rem*}{\bfseries Remark}
\newtheorem*{rems*}{\bfseries Remarks}
\newcommand{\field}[1]{\mathbb{#1}}
\newcommand{\Qfield}{\field{Q}}
\newcommand{\Zfield}{\field{Z}}
\newcommand{\Ecal}{\mathcal{E}}
\newcommand{\Fcal}{\mathcal{F}}
\newcommand{\Gcal}{\mathcal{G}}
\DeclareMathOperator{\Br}{Br}
\DeclareMathOperator{\charac}{char}
\DeclareMathOperator{\coker}{coker}
\DeclareMathOperator{\Coker}{Coker}
\DeclareMathOperator{\Gal}{Gal}
\DeclareMathOperator{\Hom}{Hom}
\DeclareMathOperator{\Id}{Id}
\DeclareMathOperator{\Ind}{Ind}
\DeclareMathOperator{\Ima}{Im}
\DeclareMathOperator{\Ker}{Ker}
\DeclareMathOperator{\Nor}{N}
\DeclareMathOperator{\divi}{\mid}
\DeclareMathOperator{\ndivi}{\nmid}
\title{\Large On Greenberg's generalized conjecture for families of number fields}
\author{ by \large NGUYEN QUANG DO Thong}
\date{October 11, 2025}
\begin{document} 
%\pagestyle{fancy}
%\fancyhf{} \fancyhead[C]{\thepage}
%\setlength{\headheight}{15pt}
%\linenumbers
\begin{center}
{\large On Greenberg's generalized conjecture \\
for families of number fields}\\
{by Thong NGUYEN QUANG DO}\\
October 11, 2025
\end{center}

%%
%%abstract
%%
\begin{abstract} For a number field $k$ and an odd prime $p$, let $\tilde{k}$ be the compositum of all the $\Zfield_p$-extensions of $k$, $\tilde{\Lambda }$ the associated Iwasawa algebra, and $X(\tilde{k})$ the Galois group over $\tilde{k}$ of the maximal abelian unramified pro-$p$-extension of $\tilde{k}$. Greenberg's generalized conjecture (\emph{GGC} for short) asserts that the $\tilde{\Lambda}$-module $X(\tilde{k})$ is pseudo-null. Very few theoritical results toward \emph{GGC} are known. We show here that for an imaginary k, \emph{GGC} is implied by certain pseudo-nullity conditions imposed on a special $\Zfield^2_p$-extension of $k$, and these conditions are partially or entirely fullfilled by certain families of number fields.
\end{abstract}
%%
%%section
%%
\section{\large\label{sec1} Introduction.} For  a number field $k$ and an odd prime $p$, for any integer $d\geq 1$, denote by $K^{(d)}$ a  $\Zfield^d_p$-extension of $k$, $\Gamma^{(d)}=\Gal\big(K^{(d)}/k\big)$, $\Lambda^{(d)}=\Zfield_p[[\Gamma^{(d)}]]$, $X(K^{(d)})$ the Galois group over $K^{(d)}$ of the maximal abelian unramified pro-$p$-extension of $K^{(d)}$. Two important particular cases are $k^{\text{cyc}}$, the cyclotomic  $\Zfield_p$-extension of $k$, and $\tilde{k}$, the compositum of all the $\Zfield_p$-extensions of $k$. If $k$ is totally real, Greenberg's classical conjecture \emph{GC} for short), which is a ``reasonable'' generalization of Vandiver's conjecture  for $\Qfield_p(\mu_p)^+$, asserts that the $\Lambda^{\text{cyc}}$-module $X(k^{\text{(cyc)}})$ is finite (see \cite{rgre1}).
%%
%%paragraph
%%
\paragraph{Geenberg's generalized conjecture (\emph{GGC} for short)}{\ }\\
For any $k$ , \emph{GGC} states that \emph{the $\tilde{\Lambda}$-module $X(\tilde{k})$ is pseudo-null, i.e. its $\tilde{\Lambda}$-annihilator has height $\geq 2$} (\cite{rgre2}).  Note that for a totally real field which satisfies Leopoldt's conjecture, \emph{GC} and \emph{GGC} coincide.\medskip

Although \emph{GC} has been checked for many families of numerical examples, mainly families of real quadratic fields with $p=3$, and although theoritical criteria exist (\cite{rgre1}, \cite{tngu2}), practically no general positive result is known, but see \cite{tngu1} and \cite{tngu2}. Curiously, the situation concerning \emph{GGC} for imaginary number fields is a little better. Let us cite R. Sharifi's result for $\Qfield(\mu_p)$ in the current range $p< 25.10^3$ (\cite{mo_sh1}), § 10, theorem~2; see also example 2 below ), and S. Fujii's for imaginary CM-fields verifying Leopoldt's conjecture  and the so called Itoh conditions (\cite{sfuj1}, thm 2; see also the appendix in §6 below). Sharifi's approach relies on remarkable relationship between modular symbols and values of cup-products of cyclotomic $p$-units, and Fujii's on elaborate constructions from class field  theory.\medskip

Fujii's theorem  encompasses previous results on quadratic and quartic abelian fields (\cite{jmin1}, \cite{tito1}) but the drawback of the stringent Itoh conditions is that we do not know whether there are infinitely many examples of fields satisfying them. Let us replace $X(K^{(d)})$ by the more natural (in an \'etale setting) module $X'(K^{(d)})$ defined as being the Galois group aver $K^{(d)}$ of the maximal abelian unramified pro-$p$-extension of $K^{(d)}$ which splits totally all the $p$-primes of $K^{(d)}$; let \emph{GGC'} be the conjecture obtained from \emph{GGC} on replacing $X(K^{(d)})$ by  $X'(K^{(d)})$, and which is equivalent to it if a certain decomposition property  (\emph{Dec}) holds for  $K^{(d)}/k$ (see details in section~\ref{sec5}). Besides, while \cite{sfuj1}  relies on Leopoldt's conjecture, we shall also appeal systematically to the Kuz'min-Gross conjecture in a certain way, which we shall loosely refer to as ``\emph{modulo the Kuz'min-Gross conjecture}''  (see \S\S \ref{sec5} and \ref{sec10} for details).
%%
%%paragraph
%%
\paragraph{The Kuz'min-Gross conjecture}{(\cite{fe_gr1}, \cite{ikuz1})}{\ }\\
For a number field $K$ and its cyclotomic $\Zfield_p$-extension $K^{\text{cyc}}$, with Galois group $\Gamma=\Gal(K^{\text{cyc}}/K)$, define as previously $X'(K^{\text{cyc}})$ to be the Galois group over $K^{\text{cyc}}$ of the maximal abelian unramified pro-$p$-extension of $K^{\text{cyc}}$ which split totally all the $p$-primes of $K^{\text{cyc}}$, and $X'(K^{\text{cyc}})^0$ the maximal finite submodule of $X'(K^{\text{cyc}})$. The Kuz'min-Gross conjecture for $K$ predicts the \emph{finiteness} of the module of invariants $X'(K^{\text{cyc}})^{\Gamma}$ (or equivalently of the module of coinvariants $X'(K^{\text{cyc}})_{\Gamma}$). If it holds $X'(K^{\text{cyc}})^{\Gamma}=\big(X'(K^{\text{cyc}})^0\big)^{\Gamma}$ has the same order as $\big(X'(K^{\text{cyc}})^0\big)_{\Gamma}$, but $\big(X'(K^{\text{cyc}})^0\big)_{\Gamma}\neq X'(K^{\text{cyc}})_{\Gamma}$ in general.\medskip

Away from Fujii's setting (see appendix), we aim to give general criteria by showing three main results modulo (\emph{Dec)} and the Kuz'min-Gross conjecture:
\begin{description}
\item[A.] If an imaginary number field $k$ admits a special $\Zfield_p^2$-extension $K^{(2)}$ such that the $\Lambda^{(2)}$-module $X'\big(K^{(2)}\big)$ is pseudo-null, then $k$ verify \emph{GGC} (theorem~\ref{t:5-5}).
\item[B.] For a special $\Zfield_p^2$-extension $K^{(2)}/k$ obtained by composing $k^{\text{cyc}}$ with an independant  $\Zfield_p$-extension $F_{\infty}/k$, with $\Gamma=\Gal(k^{\text{cyc}}/k)\simeq \Gal\big(K^{(2)}/F_{\infty}\big)$, $\Lambda=\Zfield_p[[F_{\infty}/k]]$, we give a theorem computing the deviation between the $\Lambda$-characteristic series of $X'\big(K^{(2)}\big)_{\Gamma}$ and of $X'\big(K^{(2)}\big)^{\Gamma}$ (theorem~\ref{t:10-5}). This gives a criterion for the validity of \emph{GGC} (see theorem~\ref{t:15-10}).
\item[C.] We construct families of imaginary fields which satisfiy at least partially the above criterion (see \S\S\ref{sec20}-\ref{sec25}).
\end{description}

One particular family consists of the so called  $\bm{ (p,i)-{\rm regular}}$ fields (Itoh's conditions\label{itoh}, \cite{tito1}, imply that the base field $k$ is $(p,0)-{\rm regular}$, see \S\ref{sec30}), for which the ``$e_{i-1}$-part of \emph{GGC}'' can be proved (see theorem~\ref{t:25-5}). If $k$ is $(p,i)-{\rm regular}$, so is any extension $F/k$ which is unramified outside $p$. the notion of $ (p,i)-{\rm regularity}$ being designed to ``kill a part of the arithmetic of the field at the prime $p$'',  it would be probably possible to show algebraically that  $(p,i)-{\rm regular}$ fields verify  the Kuz'min-Gross conjecture. Anyway, since this conjecture (at the same title as Leopoldt's) is proved for abelian fields and widely expected to hold true in general, its intervention in the hypothesis is ``morally superfluous''.
%%
%%section
%%
\section{\large\label{sec5} Induction on $d\geq 2$.}
From now on, $k$ denotes an imaginary field with $r_2$ complex places. It is classically known that $\widetilde{\Gamma}=\Gal(\widetilde{k}/k)$ is a free $\Zfield_p$-module of rank $1+r_2+\delta$, where $\delta\geq 0$ is the defect of Leopoldt's conjecture. recall our two types of Iwasawa modules:\\
$X(K^{(d)})$ (resp. $X'(K^{(d)})$)= the Galois group over $K^{(d)}$ of the maximal abelian pro-$p$-extension of $K^{(d)}$ which is unramified everywhere (resp. which splits completely every $p$-place of $K^{(d)}$), and the two Greenberg's generalized conjectures, with $\widetilde{\Lambda}=\Zfield_p[[\widetilde{\Gamma}]]$:\\
\emph{GGC} The $\tilde{\Lambda}$-module $X(\tilde{k})$ is pseudo-null\\
 \emph{GGC'} The $\tilde{\Lambda}$-module $X'(\tilde{k})$ is pseudo-null.\medskip

\emph{GGC} implies \emph{GGC'} because $X'(\tilde{k})$ is a quotient of $X(\tilde{k})$. For any number  field $K$, denote by $U_K$ (resp $U_{K}'$) the group of units (resp. $(p)$-units) of $K$, by $A_K$ (resp. $A'_K$) the $p$-group of ideal classes (resp. of $(p)$-ideal classes) of $K$ , and by $S$ the set of $p$-primes of $K$. The classical exact sequence induced by valuations 
\begin{equation*}
0\rightarrow U_K\rightarrow U'_K\rightarrow \Zfield_p[S]\rightarrow A_K\rightarrow A'_K\rightarrow 0
\end{equation*}
is used to show that \emph{GGC} implies \emph{GGC'} if $\tilde{k}$ satisfies the following decomposition condition:\medskip

\begin{center}
\begin{minipage}{80mm}
\emph{(D)  For any $p$-place $v$ of $\tilde{k}$, the decomposition subgroup $\tilde{\Gamma}_v$ has $\Zfield_p$-rank at least 2.}
\end{minipage}
\end{center}

It seems likely that (D) is valid for imaginary base field $k$-in general or under mild conditions- but any attempt to prove this gets bogged down by non trivial problems of $p$-adic independence when the extension $k/\Qfield$ is not normal. In the sequel we shall impose the following analogous decomposition property on certain $\Zfield_p^d$-subextensions of $\tilde{k}$:
\begin{center}
\begin{minipage}{80mm}
\emph{(Dec) $K^{(d)}/k$ is a $\Zfield^d_p$-extension, with $d\geq 2$, such that  for every $p$-place $v$ of $K^{(d)}$,  the corresponding decomposition subgroup $\Gamma^{(d)}_v$ has $\Zfield_p$-rank at least 2.}
\end{minipage}
\end{center}
One can construct examples from the following criterion:
%%
%%Lemma
%%
\begin{lm}\label{l:5-1}
Let $k$ be imaginary, and let $K^{(d)}/k$ be a $\Zfield^d_p$-extension ($d\geq 2$) which is normal over $\Qfield$. Then for any $p$-place $v$ of $K^{(d)}$,  $\Zfield_p$-rank $\Gamma^{(d)}_v\geq d/s$, where $s$ is the number of $p$-places  of $k$.
\end{lm}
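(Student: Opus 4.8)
The plan is to first use the normality of $K^{(d)}$ over $\Qfield$ to see that all the decomposition subgroups at $p$-places of $K^{(d)}$ share one common $\Zfield_p$-rank $r$, and then to run a class-field-theoretic argument showing that the $s$ decomposition subgroups attached to the $s$ distinct $p$-places of $k$ together generate a subgroup of finite index in $\Gamma^{(d)}\simeq\Zfield_p^d$; comparing $\Zfield_p$-ranks then forces $sr\ge d$, i.e. $r\ge d/s$.

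First I would put $G=\Gal(K^{(d)}/\Qfield)$, which contains $\Gamma^{(d)}=\Gal(K^{(d)}/k)$ with finite index $[k:\Qfield]$. Since $p$ has a single place in $\Qfield$ and $K^{(d)}/\Qfield$ is Galois, $G$ permutes the $p$-places of $K^{(d)}$ transitively. For a $p$-place $v$ of $K^{(d)}$ one has $\Gamma^{(d)}_v=D_v\cap\Gamma^{(d)}$, where $D_v\subseteq G$ is the decomposition group of $v$ in $G$; and if $v'=g\cdot v$ with $g\in G$, then $D_{v'}=gD_vg^{-1}$, whence $g\,\Gamma^{(d)}_v\,g^{-1}=D_{v'}\cap g\Gamma^{(d)}g^{-1}$. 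Because $\Gamma^{(d)}$ and $g\Gamma^{(d)}g^{-1}$ both have index $[k:\Qfield]$ in $G$, their intersection has finite index in each, so $\Gamma^{(d)}_{v'}=D_{v'}\cap\Gamma^{(d)}$ is commensurable with $g\,\Gamma^{(d)}_v\,g^{-1}$, which is isomorphic to $\Gamma^{(d)}_v$. As commensurable closed subgroups of a group $\simeq\Zfield_p^d$ have equal $\Zfield_p$-rank, all the $\Gamma^{(d)}_v$ with $v\mid p$ have one and the same $\Zfield_p$-rank $r$.

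It remains to bound $r$ from below. Let $u_1,\dots,u_s$ be the $p$-places of $k$, fix for each a $p$-place $w_i$ of $K^{(d)}$ above it, and let $\Delta$ be the subgroup of $\Gamma^{(d)}$ generated by $\Gamma^{(d)}_{w_1},\dots,\Gamma^{(d)}_{w_s}$. Since $\Gamma^{(d)}$ is abelian, $\Delta=\Gamma^{(d)}_{w_1}+\cdots+\Gamma^{(d)}_{w_s}$ is a finitely generated, hence closed, $\Zfield_p$-submodule, and the fixed field $M=(K^{(d)})^{\Delta}$ is Galois over $k$ with $\Gal(M/k)=\Gamma^{(d)}/\Delta$. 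By construction each $u_i$ splits completely in $M/k$. On the other hand, any $\Zfield_p^d$-extension of $k$ is unramified outside $p$, and $k$ being imaginary there is no ramification at the archimedean places, so $M/k$ is an everywhere-unramified abelian pro-$p$-extension; hence $\Gal(M/k)$ is a quotient of the $p$-part of the ideal class group of $k$, and is therefore finite. Thus $\Delta$ has finite index in $\Gamma^{(d)}$, so $\Zfield_p$-rank $\Delta=d$, and subadditivity of the rank gives
\[
d=\Zfield_p\text{-rank}\,\Delta\ \le\ \sum_{i=1}^{s}\Zfield_p\text{-rank}\,\Gamma^{(d)}_{w_i}=s\,r .
\]
Since $r$ is the common $\Zfield_p$-rank of all decomposition subgroups at $p$-places of $K^{(d)}$, this yields $\Zfield_p$-rank $\Gamma^{(d)}_v\ge d/s$ for every $p$-place $v$ of $K^{(d)}$.

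I expect the class-field-theoretic step to be the crux: one must recognize the field cut out by the sum of the decomposition subgroups at $p$ as an everywhere-unramified (abelian, pro-$p$) extension of $k$, which forces it to be of finite degree and hence forces those decomposition subgroups to be ``large''. The normality of $K^{(d)}$ over $\Qfield$ enters only in the softer first step, but it is indispensable there: without it the argument would yield the bound $d/s$ only for the \emph{largest} of the $\Gamma^{(d)}_v$, whereas transitivity of $G$ on the $p$-places of $K^{(d)}$ is exactly what propagates the estimate to every $p$-place.
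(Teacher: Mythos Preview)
Your argument is correct and reaches the same conclusion, but by a route genuinely different from the paper's. The paper works on the $H^1$ side: it invokes the Poitou--Tate exact sequence and the Brauer--Hasse--Noether theorem to prove that the global-to-local map
\[
j:\ H^1\big(G_S(k),\Zfield_p\big)\ \longrightarrow\ \bigoplus_{v\in S} H^1(k_v,\Zfield_p)
\]
is injective, then descends via inflation--restriction to the injectivity of the restriction map $H^1(\Gamma^{(d)},\Zfield_p)\to\bigoplus_{v\in S}H^1(\Gamma^{(d)}_v,\Zfield_p)$, and finally appeals to the Galois symmetry over $\Qfield$ to get the uniform bound $d/s$. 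Your proof is essentially the Pontryagin-dual computation: instead of showing that $\Hom(\Gamma^{(d)},\Zfield_p)$ injects into the sum of $\Hom(\Gamma^{(d)}_v,\Zfield_p)$, you show directly that the subgroup generated by the $\Gamma^{(d)}_{w_i}$ has finite index in $\Gamma^{(d)}$, because its fixed field is an everywhere-unramified abelian pro-$p$-extension of $k$ and hence governed by the finite $p$-class group. This is more elementary --- it replaces Poitou--Tate and Brauer--Hasse--Noether by the finiteness of the Hilbert $p$-class field --- and it makes the mechanism (decomposition groups must fill up $\Gamma^{(d)}$ up to finite index) completely transparent.

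One small remark on the first step: when you write ``commensurable closed subgroups of a group $\simeq\Zfield_p^d$'', note that $g\,\Gamma^{(d)}_v\,g^{-1}$ lies in $g\Gamma^{(d)}g^{-1}$, which need not equal $\Gamma^{(d)}$ since $k$ is not assumed normal over $\Qfield$. Your commensurability argument is nonetheless fine (both groups sit inside $G$, both are free pro-$p$ of finite rank, and commensurable such groups have equal rank); it is just the phrasing that could be tightened. In fact your treatment of this point is arguably more careful than the paper's, which simply asserts that ``all the local maps $j_v^{(d)}$ behave in the same manner'' because $K^{(d)}/\Qfield$ is Galois.
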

%%
%%proof
%%
\begin{proof} We adopt the point of view of \cite{fl_ng1}: denote by $S$ the set of $p$-primes of $k$, by $k_S$ the maximal algebraic extension of $k$ which is unramified outside $S$, $G_S(k)=\Gal(k_S/k)$; the $\Zfield_p$-extensions of $k$ are classified by $H^1(G_S(k),\Zfield_p)=\Hom(G_S(k),\Zfield_p)=\Hom(G_S(k)^{\text{ab}},\Zfield_p)$, and this group in turn can be described by the Poitou-Tate exact sequence (see \cite{psch1}, \S2, Satz 4):
\begin{multline*}
\bigoplus_{v\in S}H^1\big(k_v,\Qfield_p/\Zfield_p(1)\big)\rightarrow H^1(G_S(k),\Zfield_p)^{\vee}\rightarrow H^2\big(G_S(k),\Qfield/\Zfield_p(1)\big)\rightarrow\\
\rightarrow\bigoplus_{v\in S} H^2\big(k_v,\Qfield_p/\Zfield_p(1)\big)
\end{multline*}
where $(.)^{\vee}$ denotes the Pontryagin dual. The kernel of the map
\begin{equation*}
H^2\big(G_S(k),\Qfield_p/\Zfield_p(1)\big)\rightarrow\bigoplus_{v\in S} H^2\big(k_v,\Qfield_p/\Zfield_p(1)\big)
\end{equation*}
is dealt with the theory of Brauer groups; according to \cite{psch1}, \S 4, lemma 2, this is nothing but 
\begin{equation*}
\Ker\big(\Br(k)\{p\}\rightarrow\bigoplus_{\text{all }v}\Br(k_v)\{p\}\big)
\end{equation*}
which is null by the Brauer-Hasse-Noether theorem. Taking Pontryagin duals, we then get a short exact sequence
\begin{equation*}
0\rightarrow H^1(G_S(k),\Zfield_p)\rightarrow\bigoplus_{v\in S}H^1\big(k_v,\Qfield_p/\Zfield_p(1)\big)^{\vee} \cong \bigoplus_{v\in S}H^1\big(k_v,\Zfield_p\big)
\end{equation*}
(by local duality),which means that the natural cohomological map
\begin{align*}
j:\ H^1(G_S(k),\Zfield_p) & \rightarrow \bigoplus_{v\in S}H^1\big(k_v,\Zfield_p\big)\\
 \varphi & \mapsto\big(j_v(\varphi)\big)_{v\in S}
\end{align*}
is injective. Recall that the map $j_v$ consists in restricting first from $G_S(k)$ to a decomposition subgroup $G_S(k)_v$, then inflating to the absolute Galois group of $k_v$. Let us assume from now on  that some given $\Zfield_p$-extension $K^{(d)}/k$ is normal over $\Qfield$, with $\Gamma^{(d)}= \Gal(K^{(d)}/k))$ ($d\geq 2$). The analog of $j$ is the natural map 
\begin{equation*}
j^{(d)}=\big(j^{(d)}_v\big)_{v\in S}: H^1\big(\Gamma^{(d)},\Zfield_p\big)\xrightarrow{\text{res}}\bigoplus_{v\in S}H^1\Gamma^{(d)}_v,\Zfield_p\big)\xrightarrow{\text{inf}}\bigoplus_{v\in S}H^1(k_v,\Zfield_p)\, .
\end{equation*}
The injectivity of $j$ and of the inflation maps 
\begin{equation*}
H^1\big(\Gamma^{(d)},\Zfield_p\big)\rightarrow H^1(G_S(k),\Zfield_p) \text{ and } H^1\big(\Gamma^{(d)}_v,\Zfield_p\big)\rightarrow H^1(G_S(k)_v,\Zfield_p)
\end{equation*}
obviously implies the injectivity of the above map $\text{\it res}$. Since $K^{(d)}/\Qfield$ is Galois, all the local maps $j_v^{(d)}$ behave in the same manner, and therefore  $\Zfield_p$-rank $H^1\big(\Gamma_v^{(d)},\Zfield_p\big)\geq d/s$ for any $p$-place $v$ of $K^{(d)}$.
\end{proof}
To go further in the direction of \emph{(Dec)}, let us now restrict to extensions \hfill\break
$K^{(d)}/k^{\text{cyc}}/k$.
%%
%%lemma
%%
\begin{lm}[cp. \cite{tkat1}, lemma 4.10]\label{l:5-5}
Let $k$ be an imaginary number field, normal over $\Qfield$, admitting a $\Zfield_p^2$-extension $K^{(2)}$ which contains $k^{\text{cyc}}$ and is normal over $\Qfield$ (such a $\Zfield_p^2$-extension will be called \emph{``special''}). Suppose also that $k$ satisfies the Kuz'min-Gross conjecture. Then $K^{(2)}$ verifies  (Dec), and so does any $\Zfield_p^d$-extension containing $K^{(2)}$.
\end{lm}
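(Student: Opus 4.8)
The plan is to translate \emph{(Dec)} for $K^{(2)}$ into a statement on the splitting of $p$-places in the $\Zfield_p$-extension $K^{(2)}/k^{\text{cyc}}$, and then to show that too much splitting would contradict the Kuz'min-Gross conjecture for $k$.

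First I would record that $K^{(2)}$, being a $\Zfield_p^2$-extension of $k$, is the compositum of its $\Zfield_p$-subextensions (the intersection of the kernels of all continuous surjections $\Zfield_p^2\twoheadrightarrow\Zfield_p$ is trivial), hence $K^{(2)}\subseteq\widetilde{k}$; in particular $K^{(2)}/k$, and \emph{a fortiori} $K^{(2)}/k^{\text{cyc}}$, is unramified outside $p$. Put $N=\Gal(K^{(2)}/k^{\text{cyc}})\simeq\Zfield_p$, so $\Gamma^{(2)}/N\simeq\Gamma=\Gal(k^{\text{cyc}}/k)$. For a $p$-place $v$ of $K^{(2)}$ lying over the $p$-place $w$ of $k^{\text{cyc}}$, the image of the decomposition subgroup $\Gamma^{(2)}_v$ in $\Gamma$ is the decomposition subgroup of $w$ in $k^{\text{cyc}}/k$, which is open since $p$-places are eventually totally ramified in the cyclotomic tower; hence $\Zfield_p$-rank $\Gamma^{(2)}_v\geq 1$ unconditionally. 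Therefore \emph{(Dec)} for $K^{(2)}$ can fail only if, for some $p$-place $v$, one has $\Gamma^{(2)}_v\cap N=\{1\}$, i.e. $w$ splits completely in the $\Zfield_p$-extension $K^{(2)}/k^{\text{cyc}}$.

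Suppose some $p$-place $w$ of $k^{\text{cyc}}$ does split completely in $K^{(2)}/k^{\text{cyc}}$. Since $K^{(2)}/\Qfield$ and $k^{\text{cyc}}/\Qfield$ are Galois (the latter because $k/\Qfield$ is and $k^{\text{cyc}}=k\cdot\Qfield^{\text{cyc}}$) and $\Gal(k^{\text{cyc}}/\Qfield)$ acts transitively on the $p$-places of $k^{\text{cyc}}$, \emph{every} $p$-place of $k^{\text{cyc}}$ then splits completely in $K^{(2)}/k^{\text{cyc}}$. Combined with the fact that $K^{(2)}/k^{\text{cyc}}$ is unramified outside $p$, this means $K^{(2)}/k^{\text{cyc}}$ is everywhere unramified and totally split at every $p$-place, so $K^{(2)}$ is contained in the maximal abelian pro-$p$-extension of $k^{\text{cyc}}$ with these properties; consequently there is a surjection $X'(k^{\text{cyc}})\twoheadrightarrow N\simeq\Zfield_p$. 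As $K^{(2)}/k$ is abelian, $\Gamma$ acts trivially on $N$, and since this surjection is $\Gamma$-equivariant it factors through a surjection $X'(k^{\text{cyc}})_{\Gamma}\twoheadrightarrow\Zfield_p$. This contradicts the Kuz'min-Gross conjecture for $k$, which makes $X'(k^{\text{cyc}})_{\Gamma}$ finite. Hence no $p$-place of $k^{\text{cyc}}$ splits completely in $K^{(2)}/k^{\text{cyc}}$, every $\Gamma^{(2)}_v$ has $\Zfield_p$-rank $2$, and $K^{(2)}$ verifies \emph{(Dec)}. For a $\Zfield_p^d$-extension $K^{(d)}\supseteq K^{(2)}$ one concludes by noting that the restriction map $\Gamma^{(d)}\to\Gamma^{(2)}$ carries the decomposition subgroup of each $p$-place $v$ of $K^{(d)}$ onto that of its restriction to $K^{(2)}$, whence $\Zfield_p$-rank $\Gamma^{(d)}_v\geq 2$.

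I do not expect a genuine obstacle. The points needing a little care are: that $K^{(2)}\subseteq\widetilde{k}$, so $K^{(2)}/k^{\text{cyc}}$ is unramified outside $p$; the use of normality over $\Qfield$ to pass from ``split at one $p$-place'' to ``split at all $p$-places''; and the verification that the surjection onto $N$ is genuinely $\Gamma$-equivariant, so that it descends to the $\Gamma$-coinvariants of $X'(k^{\text{cyc}})$. The Kuz'min-Gross hypothesis is used exactly to eliminate the one way in which \emph{(Dec)} could fail.
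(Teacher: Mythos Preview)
Your argument is correct and follows essentially the same route as the paper's proof: both use that $\Gal(L'/k^{\text{cyc}})\cong X'(k^{\text{cyc}})_{\Gamma}$ is finite by Kuz'min--Gross (where $L'$ is the maximal abelian pro-$p$-extension of $k$ containing $k^{\text{cyc}}$ in which every $p$-place of $k^{\text{cyc}}$ splits completely), deduce that some $p$-place of $k^{\text{cyc}}$ is only finitely decomposed in $K^{(2)}/k^{\text{cyc}}$, and then invoke normality of $K^{(2)}/\Qfield$ to propagate this to all $p$-places. Your write-up is in fact more careful than the paper's on two points the latter leaves implicit: that $K^{(2)}\subseteq\widetilde{k}$ forces $K^{(2)}/k^{\text{cyc}}$ to be unramified outside $p$, and that the surjection $X'(k^{\text{cyc}})\twoheadrightarrow N$ descends to $\Gamma$-coinvariants because $K^{(2)}/k$ is abelian.
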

%%
%%proof
%%
\begin{proof} Let $L'$ be the maximal abelian pro-p-extension of $k$ containing $k^{\text{cyc}}$ and such that all the $p$-places of $k^{\text{cyc}}$ are totally split in $L'$. Denote $\Gamma=\Gal(k^{\text{cyc}}/k)$; then, by class field theory, $\Gal(L'/k^{\text{cyc}})$ is no other than $X'(k^{\text{cyc}})_{\Gamma}$, which is finite according to Kuz'min-Gross. This implies that there exists at least one $p$-place $w$ of $k^{\text{cyc}}$ which is only finitely decomposed in $K^{(2)}/k^{\text{cyc}}$, i.e. for which $\Gal(K^{(2)}/k^{\text{cyc}})_w$ has $\Zfield_p$-rank $1$, and this happens for all $w$ above $p$ since $K^{(2)}$ is normal aver $\Qfield$. Hence $\Gamma^{(2)}_v$ has $\Zfield_p$-rank $2$ for all $p$-places $v$ of $k$. The rest of the lemma is obvious.
\end{proof}
%%
%%exemple
%%
\begin{ex}\label{ex:5-1} Obviously any imaginary quadratic field admits a special $\Zfield_p^2$-extension, the compositum of its cyclotomic and anti-cyclotomic $\Zfield_p$-exten\-sions. Then any $k$ containing an imaginary quadratic field will admit a  special $\Zfield_p^2$-extension by translation. This can be extended as follows.
Suppose that $k$ is an abelian CM-field over $\Qfield$ and the exponent of $G=\Gal(k/\Qfield)$ divides $p-1$. Using the theory of pro-$p$-groups with $G$-action, Greenberg has shown that for every odd character $\chi$ of $G$ (viewed as taking its values in $\Zfield^*_p$), there exists a uniquely determined $\Zfield_p$-extension such that $G$ acts on $\Gamma_{\chi}=\Gal(k^{\chi}_{\infty}/k)$ by the character $\chi$; the extension $k^{\chi}_{\infty}/k$ is normal, with Galois group isomorphic to $\Gamma_{\chi} \rtimes G$ (\cite{rgre3}, remark 3.3.2). It follows that the compositum of $k^{\text{cyc}}$ and $k^{\chi}_{\infty}$ is indeed a special $\Zfield_p^2$-extension of $k$. For convenience, we shall refer to it as a special $\Zfield_p^2$-extension \emph{``of Greenberg type''}.
\end{ex}

From now on we focus on \emph{GGC'}. The point is that the fate of \emph{GGC'} can be decided very early, e.g. starting from $\Zfield_p^2$-extensions of $k$ as in \cite{sfuj1}. Under the so called Itoh conditions for a CM field k (see the appendix) and using class field theory, S. Fujii constructed a sequence of (uniquely determined) multiple $\Zfield_p$-extensions $k\subset\dots \subset K^{(d)}\subset  K^{(d+1)}\subset\dots \subset \tilde{k}$, and he showed by elaborate class field theoritic calculations that if  $X(K^{(d)})$ is $\Lambda^{(d)}$-pseudo-null for $d\geq 2$, then $X(K^{(d+1)})$ is $\Lambda^{(d+1)}$-pseudo-null for $d\geq 2$ 'see \cite{sfuj1}, theorem~3 of step 3). In the sequel , we shall show for \emph{GGC'} results analogous to Fujii's, but in another direction. For induction steps, but also for the arguments in the next secxtion, we shall need the following algebraic criterion:
%%
%%lemma
%%
\begin{lm}[\cite{ba_ba_lo1}, 2.10 and 2.11] \label{l:5-10}Let $R$ be a noetherian Krull domain (e.g. $R=\Lambda^{(d)}$) and $S=R[[t]]$. Let $M$ be a noetherian torsion $S$-module such that $M/tM$ is a noetherian torsion $R$-module. Then the $t$-torsion  submodule $M_t$ is also a noetherian $R$-torsion module, $\charac_R(M_t)$ divides $\charac_R(M/tM)$, and $M$ is $S$-pseudo-null if and only if  $\charac_R(M_t)=\charac_R(M/tM)$, where $\charac_R(.)$ denotes the characteristic ideal. Moreover,, if $M/tM$ is $R$-pseudo-null, then $M$is $S$-pseudo-null.
\end{lm}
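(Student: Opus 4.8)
The plan is to reduce the whole statement to length computations over the discrete valuation rings $R_\qfrak$, $\qfrak$ running over the height-$1$ primes of $R$, after first handling the unique height-$1$ prime of $S$ containing $t$, namely $(t)$ (it is prime, $S/tS=R$ being a domain, and $S_{(t)}$ is a DVR). Throughout, $M_t$ is the kernel of multiplication by $t$ on $M$ and $\ell_B$ denotes length over $B$. So first I would localize the four-term exact sequence $0\to M_t\to M\xrightarrow{\,t\,}M\to M/tM\to 0$ at $(t)$. As $M$ is $S$-torsion, $M_{(t)}$ has finite length over the DVR $S_{(t)}$, so $\ker$ and $\coker$ of multiplication by $t$ on it have equal length; but $M_{(t)}/tM_{(t)}=(M/tM)\otimes_R\Frac(R)$ vanishes because $M/tM$ is $R$-torsion. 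Hence $M_{(t)}=0$, i.e. $(t)\nmid\charac_S(M)$, and also $(M_t)_{(t)}=M_t\otimes_R\Frac(R)=0$. Since $M_t$ is killed by $t$ it is an $R$-submodule of the Noetherian $S$-module $M$, hence Noetherian over $R$; being moreover $R$-torsion (as $M_t\otimes_R\Frac(R)=0$), it is a Noetherian $R$-torsion module. This is the first assertion, and $\charac_R(M_t)$, $\charac_R(M/tM)$ are now both defined.

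Next I would fix a height-$1$ prime $\qfrak$ of $R$, set $S_\qfrak=(R\setminus\qfrak)^{-1}S$ (so $S_\qfrak/tS_\qfrak=R_\qfrak$ is a DVR), and note that $(M/tM)_\qfrak=\coker(t\colon M_\qfrak\to M_\qfrak)$ and $(M_t)_\qfrak=\ker(t\colon M_\qfrak\to M_\qfrak)$ have finite length over $R_\qfrak$. For a finitely generated $S_\qfrak$-module $P$ for which these two lengths are finite, put $\chi(P):=\ell_{R_\qfrak}\coker(t\colon P\to P)-\ell_{R_\qfrak}\ker(t\colon P\to P)$; the snake lemma makes $\chi$ additive on short exact sequences, so a prime filtration $0=N_0\subset\cdots\subset N_r=M_\qfrak$ with $N_i/N_{i-1}\cong S_\qfrak/\Pfrak_i$ (a short induction up the filtration keeps all intermediate kernels and cokernels of finite length) gives $\chi(M_\qfrak)=\sum_i\chi(S_\qfrak/\Pfrak_i)$. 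Each term is computed by hand: $(t)$ does not occur among the $\Pfrak_i$ since $M_{(t)}=0$; if $t\in\Pfrak_i$ then $\Pfrak_i$ must be the maximal ideal $\qfrak S_\qfrak+tS_\qfrak$ (the only prime of $S_\qfrak$ containing $t$ other than $(t)$), $t$ kills the residue field, and $\chi(S_\qfrak/\Pfrak_i)=0$; if $t\notin\Pfrak_i$ then $t$ is a nonzerodivisor on $S_\qfrak/\Pfrak_i$, so $\ker=0$ and $\chi(S_\qfrak/\Pfrak_i)=\ell_{R_\qfrak}\big((R/\overline{\Pfrak_i})_\qfrak\big)\ge 0$, where $\overline{\Pfrak_i}$ is the image of $\Pfrak_i$ in $R=S/tS$ — a proper ideal of $R$, since $t$ lies in the Jacobson radical of $S=R[[t]]$ — this term vanishing unless $\mathrm{ht}\,\Pfrak_i=1$. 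Hence $\chi(M_\qfrak)\ge0$ for every $\qfrak$, which is precisely $\charac_R(M_t)\mid\charac_R(M/tM)$ — the second assertion — and, collecting the contributions,
\[
\charac_R(M/tM)=\charac_R(M_t)\cdot\prod_{\Pfrak}\charac_R(R/\overline{\Pfrak})^{e_\Pfrak},
\]
the finite product being over the height-$1$ primes $\Pfrak\ne(t)$ of $S$, with $e_\Pfrak$ the multiplicity of $\Pfrak$ in $\charac_S(M)$.

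The rest follows from this formula. Since $(t)\nmid\charac_S(M)$, $M$ is $S$-pseudo-null iff $\charac_S(M)=(1)$ iff all $e_\Pfrak=0$ iff the displayed product is trivial iff $\charac_R(M/tM)=\charac_R(M_t)$; for the only non-formal implication one uses that every factor $\charac_R(R/\overline{\Pfrak})$ occurring with $e_\Pfrak>0$ is a nontrivial effective divisor, so that such a product equals $(1)$ only when it is empty. Finally, if $M/tM$ is $R$-pseudo-null then $\charac_R(M/tM)=(1)$, so $\charac_R(M_t)=(1)$ by the divisibility, hence $\charac_R(M_t)=\charac_R(M/tM)$ and $M$ is $S$-pseudo-null — the last assertion.

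The point needing real care, and the step I expect to be the main obstacle, is the claim used just above that the image $\overline{\Pfrak}$ in $R$ of a height-$1$ prime $\Pfrak\ne(t)$ of $S=R[[t]]$ is again of height $1$: it is never the unit ideal (as $t$ lies in the Jacobson radical of $S$) and never zero (as $\Pfrak\ne(t)$), so what has to be excluded is height $\ge2$. When $R$ is regular — in particular for $R=\Lambda^{(d)}$, the only case used in the paper — $S$ is a UFD, $\Pfrak=(f)$ is principal, and $\overline{\Pfrak}=(f\bmod t)$ is a nonzero proper principal ideal, hence of height exactly $1$ by Krull's principal ideal theorem, so this case is immediate. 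For a general Noetherian Krull domain one has to run the same argument with divisors in place of prime ideals, which is the technical core of the cited reference.
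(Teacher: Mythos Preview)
The paper does not prove this lemma at all: it is simply quoted from \cite{ba_ba_lo1} (with the final sufficient condition attributed to \cite{jmin1}), so there is no in-paper argument to compare against. Your Herbrand-quotient approach via prime filtrations is the standard one and is correct for regular $R$ --- in particular for $R=\Lambda^{(d)}$, which is the only case the paper actually uses.

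A couple of minor imprecisions worth cleaning up. First, $S_\qfrak=(R\setminus\qfrak)^{-1}S$ need not be local, so calling $\qfrak S_\qfrak+tS_\qfrak$ ``the'' maximal ideal is inaccurate; what you really use (and what is true) is that the only primes of $S_\qfrak$ containing $t$ are $tS_\qfrak$ and $\qfrak S_\qfrak+tS_\qfrak$, since $S_\qfrak/tS_\qfrak=R_\qfrak$ is a DVR. Second, the assertion ``this term vanishes unless $\mathrm{ht}\,\Pfrak_i=1$'' amounts to showing that a prime $\Pfrak_i$ of $S_\qfrak$ of height $\ge 2$ with $t\notin\Pfrak_i$ satisfies $\Pfrak_i+(t)=S_\qfrak$ after restriction to $R_\qfrak$; your argument implicitly uses that $S$ is catenary (so that $\Pfrak_i\subsetneq(\qfrak,t)$ forces $\mathrm{ht}\,\Pfrak_i\le 1$), which is fine for $R$ regular. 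You also conflate $R$ with $R_\qfrak$ and $S$ with $S_\qfrak$ in a couple of places, but the intended meaning is clear.

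You correctly isolate the one genuinely nontrivial point for a general noetherian Krull domain --- that $\overline{\Pfrak}$ has height exactly $1$ in $R$ for every height-$1$ prime $\Pfrak\neq(t)$ of $S$ --- and your principal-ideal argument disposes of the regular case cleanly.
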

(note that the last sufficient condition in the lemma can already be found in \cite{jmin1}.)\medskip

To simplify the language, we shall say that a $\Zfield_p^d$-extension $K^{(d)}/k$ is \emph{green} if the $\Lambda^{(d)}$-module $X'(K^{(d)})$ is pseudo-null. So \emph{GGC'} simply means that $\tilde{k}/k$ is green. Here is a refinement of the proposition~4.B of Minardi's thesis \cite{jmin1}:
%%
%%theorem
%%
\begin{thm}\label{t:5-5} If an imaginary number field $k$, normal over $\Qfield$, admits a $\Zfield_p^2$-extension which is both special (see the definition in lemma ~\ref{l:5-1}) and green, then $k$ verifies \emph{GGC'}.
\end{thm}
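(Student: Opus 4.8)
The plan is to climb from $K^{(2)}$ up to $\tilde{k}$ one $\Zfield_p$-layer at a time, feeding Lemma~\ref{l:5-10} at each step. Since $k$ is imaginary and normal over $\Qfield$ and admits a special $\Zfield_p^2$-extension $K^{(2)}$, Lemma~\ref{l:5-5} shows that $K^{(2)}$ satisfies \emph{(Dec)}; moreover \emph{(Dec)} is automatically inherited by every $\Zfield_p^d$-extension $K^{(d)}$ with $K^{(2)}\subseteq K^{(d)}\subseteq\tilde{k}$, because the surjection $\Gamma^{(d)}\twoheadrightarrow\Gamma^{(2)}$ carries decomposition subgroups onto decomposition subgroups and can only increase $\Zfield_p$-ranks. (As announced in the introduction, the Kuz'min--Gross conjecture intervenes here only through Lemma~\ref{l:5-5}.) Fix a descending chain $\Gal(\tilde{k}/K^{(2)})=H_2\supset H_3\supset\cdots\supset H_n=\{1\}$ of direct $\Zfield_p$-summands of $\widetilde{\Gamma}$ with $H_d/H_{d+1}\cong\Zfield_p$; setting $K^{(d)}=\tilde{k}^{H_d}$ produces a tower $K^{(2)}\subset K^{(3)}\subset\cdots\subset K^{(n)}=\tilde{k}$, $n=1+r_2+\delta$, in which each $K^{(d+1)}/K^{(d)}$ is a $\Zfield_p$-extension unramified outside $p$ and each $K^{(d)}$ satisfies \emph{(Dec)}.

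I would then prove by induction on $d$, for $2\le d\le n$, that $X'(K^{(d)})$ is $\Lambda^{(d)}$-pseudo-null. The case $d=2$ is exactly the hypothesis that $K^{(2)}$ is green. For the step $d\to d+1$, set $R=\Lambda^{(d)}$, pick a topological generator $\gamma$ of $\Gamma_d:=\Gal(K^{(d+1)}/K^{(d)})\cong\Zfield_p$ and put $t=\gamma-1$, so that $\Lambda^{(d+1)}=R[[t]]$; let $M=X'(K^{(d+1)})$. Since $M$ is a noetherian torsion $R[[t]]$-module (the standard Iwasawa finiteness for the $(p)$-split module along $\Zfield_p^{d+1}$-extensions inside $\tilde{k}$), the last assertion of Lemma~\ref{l:5-10} reduces the step to proving that $M/tM=X'(K^{(d+1)})_{\Gamma_d}$ is $R$-pseudo-null.

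The crux is a class-field-theoretic (genus-theoretic) descent comparing $X'(K^{(d+1)})_{\Gamma_d}$ with $X'(K^{(d)})$. Writing $L'_{d+1}$, $L'_d$ for the maximal abelian unramified pro-$p$-extensions of $K^{(d+1)}$, $K^{(d)}$ in which all $p$-places split completely, restriction yields a natural $R$-morphism $X'(K^{(d+1)})_{\Gamma_d}\to X'(K^{(d)})$ whose kernel and cokernel are assembled from local contributions of the $p$-places in the layer $K^{(d+1)}/K^{(d)}$ (inertia, decomposition, and an $H^1$-term). Here \emph{(Dec)} is decisive: since every $p$-place of $K^{(d+1)}$ has decomposition subgroup of $\Zfield_p$-rank $\ge 2$ in $\Gamma^{(d+1)}$, all these correction modules are $\Lambda^{(d)}$-pseudo-null (indeed trivial or finite, and a finite $\Lambda^{(d)}$-module is pseudo-null as soon as $d\ge 2$, its annihilator having height $\ge d+1\ge 3$). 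Hence $X'(K^{(d+1)})_{\Gamma_d}$ is pseudo-isomorphic to $X'(K^{(d)})$, which is pseudo-null by the induction hypothesis; in particular $X'(K^{(d+1)})_{\Gamma_d}$ is noetherian torsion over $R$ and pseudo-null, so Lemma~\ref{l:5-10} gives that $X'(K^{(d+1)})$ is $\Lambda^{(d+1)}$-pseudo-null. Taking $d=n$ yields that $X'(\tilde{k})$ is $\widetilde{\Lambda}$-pseudo-null, i.e. \emph{GGC'} for $k$.

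The main obstacle is the descent step: one must make the comparison $X'(K^{(d+1)})_{\Gamma_d}\to X'(K^{(d)})$ fully explicit — pinning down its kernel and cokernel as an exact sum of local terms indexed by the $p$-places of $K^{(d+1)}$, uniformly in $d$ — and verify that \emph{(Dec)} (equivalently, finite decomposition of the $p$-places in each successive layer, available modulo Kuz'min--Gross by Lemma~\ref{l:5-5}) really does force every such term into the pseudo-null range at every level of the tower, not only at $d=2$ where the role of $k^{\text{cyc}}$ is most transparent. A secondary and essentially routine point is to record, uniformly in $d$, the noetherian torsion property of $X'(K^{(d+1)})$ over $\Lambda^{(d+1)}$ required by Lemma~\ref{l:5-10}.
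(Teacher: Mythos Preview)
Your plan is the paper's: climb a tower $K^{(2)}\subset\cdots\subset K^{(n)}=\tilde{k}$, and at each step feed the last clause of Lemma~\ref{l:5-10} by showing that $X'(K^{(d+1)})_{\Gamma_d}$ is $\Lambda^{(d)}$-pseudo-null via a comparison with $X'(K^{(d)})$ whose defect is controlled by \emph{(Dec)}. The paper resolves your acknowledged ``main obstacle'' not by attacking the map $X'(K^{(d+1)})_{\Gamma_d}\to X'(K^{(d)})$ head-on, but by interposing the modules $Y^{(d)}$ (Galois group over $K^{(d)}$ of the maximal abelian pro-$p$-extension unramified outside $p$) and $D^{(d)}=\ker\big(Y^{(d)}\twoheadrightarrow X'^{(d)}\big)$. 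Inflation--restriction handles $Y$ cleanly: $(Y^{(d+1)})_{\Gamma_d}\hookrightarrow Y^{(d)}$ with cokernel exactly $\Zfield_p$. The semi-local presentation $\overline{U'}^{(d)}\to\Fcal^{(d)}=\bigoplus_{v\mid p}\Ind_v\Fcal^{(d)}_v\to D^{(d)}\to 0$ from class field theory, together with the local analogue of inflation--restriction, bounds $\Coker\big((D^{(d+1)})_{\Gamma_d}\to D^{(d)}\big)$ by a quotient of $\bigoplus_{v\mid p}\Gamma_v$ with $\Gamma_v\in\{0,\Zfield_p\}$. Two snake-lemma diagrams then reduce the pseudo-nullity of $X'(K^{(d+1)})_{\Gamma_d}$ to that of these two pieces.

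One point in your sketch needs correcting: the local correction terms are \emph{not} ``trivial or finite''. The global defect $\Zfield_p=\Coker\big((Y^{(d+1)})_{\Gamma_d}\to Y^{(d)}\big)$ and each local $\Gamma_v\cong\Zfield_p$ are infinite abelian groups. What makes them $\Lambda^{(d)}$-pseudo-null is that the first is killed by the augmentation ideal of $\Zfield_p[[\Gamma^{(d)}]]$, which has height $d\ge 2$, and each $\Gamma_v$ is killed by the augmentation ideal of $\Zfield_p[[\Gamma^{(d+1)}_v]]$, which has height $\ge 2$ \emph{precisely} by \emph{(Dec)}. This is where \emph{(Dec)} enters, and it is the only place; replace your parenthetical ``indeed trivial or finite'' by this annihilator argument and the induction closes. (A minor aside: the paper invokes \cite{ba_ba_lo1}, lemma~3.2, to choose the tower so that $(X'^{(d+1)})_{\Gamma_d}$ is $\Lambda^{(d)}$-torsion a priori; since the diagram chase ends up proving pseudo-nullity of this module anyway, your arbitrary choice of tower is harmless here.)
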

%%
%%begin proof
%%
\begin{proof} Starting from given $\Zfield_p^2$-extension, let us build a particular tower of multiple $\Zfield_p$-extensions
\begin{equation*}
K^{(2)}\subset \dots \subset K^{(d)}\subset K^{(d+1)}\subset \dots\subset \tilde{k}\, .
\end{equation*}
To ease the notations, write ${X'}^{(d)}$ for $X'(K^{(d)})$. Since $d+1\geq 2$, it is known (\cite{ba_ba_lo1}, lemma 3.2) that one can choose the tower in such a way that the co-invariant module $({X'}^{(d+1)})_{\Gamma}$, where $\Gamma=\Gal\big(K^{(d+1)}/K^{(d)}\big)\cong \Zfield_p$, is $\Lambda^{(d)}$-torsion. We want to show that if $K^{(d)}$ is green, then  $K^{(d+1)}$ is too. By the sufficient condition in lemma~\ref{l:5-10}, it is enough to show that $({X'}^{(d+1)})_{\Gamma}$ is $\Lambda^{(d)}$-pseudo-null. Let $Y^{(d+1)}=Y(K^{(d+1)})$ denote the Galois group over $K^{(d+1)}$ of the maximal abelian pro-p-extension of $K^{(d+1)}$ which is unramified outside $(p)$. By definition, we have an exact sequence of $\Lambda^{(d+1)}$-modules
\begin{equation*}
0\rightarrow D^{(d+1)}\rightarrow Y^{(d+1)}\rightarrow ({X'}^{(d+1)})\rightarrow 0
\end{equation*}
where $D^{(d+1)}=D\big(K^{(d+1)}\big)$ denotes the decomposition subgroup relative to all $p$-places of $K^{(d+1)}$. The snake lemma yields an exact sequence of invariant/co-invariant modules
\begin{equation*}
\dots \rightarrow\big({X'}^{(d+1)}\big)^{\Gamma}\xrightarrow{\varphi}\big(D^{(d+1)}\big)_{\Gamma}\rightarrow\big(Y^{(d+1)}\big)_{\Gamma}\rightarrow \big({X'}^{(d+1)}\big)_{\Gamma}\rightarrow 0
\end{equation*}
which can be inserted into a commutative diagram of $\Lambda^{(d)}$-modules:
\begin{center}
\begin{tikzcd}
0\arrow[r] & \coker \varphi\arrow[d,"\psi_1"] \arrow[r] & \big(Y^{(d+1)}\big)_{\Gamma}  \arrow[d,"\psi_2"] \arrow[r]  & \big({X'}^{(d+1)}\big)_{\Gamma} \arrow[d,"\psi_3"] \arrow[r] & 0\\
0\arrow[r] & D^{(d)}\arrow[r] &  Y^{(d)}\arrow[d]\arrow[r] & {X'}^{(d)}\arrow[r] & 0\\
& & \Zfield_p &&
\end{tikzcd}
\end{center}
the right vertical arrow $\psi_3$ is the natural one, and the left vertical arrow $\psi_1$ is induced by the natural map $\psi_4: D^{(d+1)}\rightarrow  D^{(d)}$. Let us explain the middle arrow $\psi_2$. Making a slight abuse of language, denote by $S$ the set of $p$-primes of any extension of $k$, and by $k_S$ the maximal algebraic extension of $k$ unramified outside $S$. Note that $k_S$ contains $ K^{(d)}$.

Writing $G_S\big( K^{(d)}\big)=\Gal\big(k_S/ K^{(d)}\big)$, our middle vertical column is just the Pontryagin dual of the inflation-restriction sequence with coefficients $\Qfield_p/\Zfield_p$
\begin{multline*}
0\rightarrow H^1(\Gamma,\Qfield_p/\Zfield_p)\rightarrow H^1\big(G_S\big( K^{(d)}\big),\Qfield_p/\Zfield_p\big)\rightarrow\\
\rightarrow H^1\big(G_S\big( K^{(d+1)}\big),\Qfield_p/\Zfield_p\big)^{\Gamma}\rightarrow H^2(\Gamma,\Qfield_p/\Zfield_p)=0\, .
\end{multline*}
In particular $\psi_2$ is injective and has cokernel $\Zfield_p$. Since $\Zfield_p$ is pseudo-null because $d\geq 2$, a simple diagram chase shows that $\Ker \psi_3$ and $\Coker \psi_1$ are pseudo-isomorphic and $\Coker\psi_3$ is pseudo-null over $\Lambda^{(d)}$. Since ${X'}^{('d)}$ is pseudo-null by hypothesis, the pseudo-nullity of $\big({X'}^{(d+1)}\big)_{\Gamma}$ is then equivalent to that of $\Coker \psi_1$. Let us recall the description of $D^{(d+1)}$ by class field theory. For any finite subextension $F$ of $K^{(d+1)}$, let $U'_F$ be the group of ($p$)-units of $F$, $\overline{U_F'}=U'_F\otimes\Zfield_p$, $\overline{F_v^*}$ the $p$-adic completion of $F^*_v$, $\Fcal=\bigoplus_{v\divi p} \overline{F_v^*}$, $X'_F$, $Y_F$ and $D_F$ the analogues at the level $F$ of ${X'}^{(d+1)}$, $Y^{(d+1)}$ and $D^{(d+1)}$. The exact sequence of class field theory relative to decomposition at the level $F$ reads:
\begin{center}
\begin{tikzcd}
\overline{U_F'}\arrow[r] & \Fcal \arrow[rr] \arrow[dr]& &Y_F   \arrow[r]  & X'_F  \arrow[r] & 0\\
& & D_F  \arrow[ur]&&
\end{tikzcd}
\end{center}
Taking the inverse limit with respect to the finite subextensions $F$ of $K^{(d+1)}$, we get a sequence
\begin{equation*}
\overline{U'}^{(d+1)}\rightarrow \Fcal^{(d+1)}\rightarrow D^{(d+1)}\rightarrow 0
\end{equation*}
where 
\begin{equation*}
\overline{U'}^{(d+1)}=\varprojlim\overline{U_F'}\text{ and }\Fcal^{(d+1)}=\bigoplus_{v\divi p}\Ind_v\Fcal_v^{(d+1)}
\end{equation*}
where $\Ind_v$ denotes the induced module from $\Gamma_v^{(d+1)}$ to $\Gamma^{(d+1)}$, and $\Fcal_v^{(d+1)}=\varprojlim\overline{F_v^*}$. As previously, the snake lemma yields a commutative diagram
\begin{center}
\begin{tikzcd}
\arrow[r] & \overline{U'}^{(d+1)}\arrow[d] \arrow[r] & \big(\Fcal^{(d+1)}\big)_{\Gamma}  \arrow[d,"\psi_4"] \arrow[r]  & \big(D^{(d+1)}\big)_{\Gamma} \arrow[d,"\psi_5"] \arrow[r] & 0\\
\arrow[r] & \overline{U'}^{(d)}\arrow[r] &  \Fcal^{(d)} \arrow[r] & D^{(d)}\arrow[r] & 0
\end{tikzcd}
\end{center}
The module $(\Fcal^{(d+1)}_v$ being the local analogue of $Y^{(d+1)}$, the same inflation-restriction argument as previously yields an exact sequence
\begin{equation*}
0\rightarrow \big(\Fcal_v^{(d+1)}\big)_{\Gamma_v}\rightarrow \Fcal_v^{(d)}\rightarrow\Gamma_v\rightarrow 0
\end{equation*}
where $\Gamma_v$ is null or isomorphic to $\Zfield_p$. But $\Coker \psi_4$ is a quotient  of $\bigoplus_{v\divi p}\Gamma_v$ and each $\Gamma_v$ is killed by the augmentation ideal of $\Zfield_p[[\Gamma^{(d+1)}_v]]$ (simply because $\Gamma^{(d+1)}_v$ is abelian), so the pseudo-nullity of $\Coker\psi_4$ is a consequence of (\emph{Dec})  for $K^{(d+1)}$ (see lemma~\ref{l:5-5}). It follows that $\Coker \psi_1$, as a quotient of $\Coker\psi_5$, which is itself a quotient of $\Coker\psi_4$, is actually pseudo-null and we are done.
\end{proof}
%%
%%section
%%
\section{\large\label{sec10} Green $\Zfield_p^2$-extensions.} Recall that \emph{green} was defined just before in theorem~\ref{t:5-5}. In order to apply this theorem we must construct a green $\Zfield_p^2$-extension of $k$. The previous induction argument works no more because for $d=1$, the module $\Zfield_p$ is no longer pseudo-null. Anyway it cannot work in general since not all $\Zfield_p^2$-extensions are green. Under Itoh's conditions, (page~\pageref{itoh}), Fujii showed, again by elaborate class field theoritic calculations in his (uniquely defined) tower of extensions $k\subset K^{(1)}\subset\dots\subset K^{(d)}\subset \dots \subset \tilde{k}$, that $X^{(1)}$ is trivial and $X^{(2)}$ pseudo-null (\cite{sfuj1}, step 1 and proposition 1 of step 2). We aim to produce another criterion in a different setting, but before proceeding, let us fix some notations and conventions.
%%
%%definition
%%
\begin{defn}\label{d:10-5} The Galois setting is the following: $K^{(2)}/k$ is the compositum of the cyclotomic  $\Zfield_p$-extension $k^{\text{cyc}}=\cup k_n$ and of an independant $\Zfield_p$-extension $F_{\infty}=\cup F_m$; by definition $k^{\text{cyc}}\cap F_{\infty}=k$, but this is not a restriction since for any subgroup $G$ of finite index in $\Gamma^{(2)}$, pseudo-nullity over $G$ implies pseudo-nullity over $\Gamma^{(2)}$ (see e.g. \cite{sfuj1}, lemma 6).
\end{defn}
%%
%%paragraph
%%
\paragraph{Extension maps.} For all $m$, denote $\Gamma= \Gal\big(k^{\text{cyc)}}/k\big)\cong \Gal\big(F^{\text{cyc}}_m/F_m\big)$; for all $m$, $n$, letting $K_{m,n}=F_m\cdot k_n$, write 
\begin{equation*}
\Gal\big(F^{\text{cyc}}_m/K_{m,n}\big)=\Gamma_{m,n}\cong \Gamma^{p^n}=  \Gal\big(k^{\text{cyc)}}/k\big)=\varinjlim_n A'(K_{m,n})\, .
\end{equation*}
and
\begin{equation*}
G_{m,n}=\Gal(K_{m,n}/F_m)\cong G_n=\Gal(k_n,k)
\end{equation*}
(it is recommended to draw a Galois diagram). Denote by $A'(K_{m,n})$ the $p$-class group of the ring of $(p)$-integers of $K_{m,n}$, $A'(F_m^{\text{cyc}})=\varinjlim_n A'(K_{mn,})$. Recall that between two levels $h\geq n$, the extension map 
\begin{equation*}
j_{n,h}:\  A'(k_n)\rightarrow A'(k_h)
\end{equation*}
induced by extension of ideals is compatible with the natural map 
\begin{equation*}
X'\big(k^{\text{cyc}}\big)_{\Gamma^{p^n}}\rightarrow X'\big(k^{\text{cyc}}\big)_{\Gamma^{p^h}}
\end{equation*}
 induced by $\displaystyle v_{h,n}=\frac{\gamma^{p^h}-1}{\gamma^{p^n}-1}$ in $X'\big(k^{\text{cyc}}\big)$, where $\gamma$ is a topological generator of $\Gamma$.
%%
%%paragraph
%%
\paragraph{Norm maps.} At finite level $h\geq n$, the arithmetic norm $\Nor_{h,n}:\ A'(k_h)\rightarrow A'(k,n)$ is related to the extension map $j_{n,h}$ and to the algebraic norm $v_h:\ A'(k_h)\rightarrow A'(k_h)$ by $\Nor_{h,n}\circ j_{n,h}=v_h$. We need to introduce norm maps between some ``obstruction kernels''. For any $n\geq 0$, recall that class field theory gives a natural map $X'\big(k^{\text{cyc}}\big)_{\Gamma^{p^n}}\rightarrow A'n$ which is surjective for $n\gg 0$; let $\Psi_n=\Psi(k_n)$ be its kernel and $\Psi\big(k^{\text{cyc}}\big)=\varinjlim_n\Psi_n$ w.r.t. the extension maps. It will be recalled in proposition~\ref{p:10-5} below that the $\Psi_n$'s stabilize by extension and become isomorphic to $\Psi\big(k^{\text{cyc}}\big)$ for $n\gg 0$.\medskip

We can now state our main technical result. Recall that $\lambda'$ and $\mu'$ (resp. $\lambda$ and $\mu$) are the Iwasawa invariants relative to $X'(.)$  (resp. $X(.)$).
%%
%%theorem
%%
\begin{thm}\label{t:10-5} Let $K^{(2)}$ be a special, (lemma~\ref{l:5-5}), $\Zfield_p^2$-extension of the normal imaginary number field $k$, obtained by composing  $k^{\text{cyc}}$ with an independant $\Zfield_p$-extension $F_{\infty}=\cup F_m$ such that $k$ and all the fields $F_m$ verify the Kuz'min-Gross conjecture (``modulo the Kuz'min-Gross conjecture'' for short). \medskip

If $ \varprojlim_m\big(X'\big(F_m^{\text{cyc}}\big)^0\big)^{\Gamma}$ and $\varprojlim_m\Psi\big(F^{\text{cyc}}_m\big)$ are finite, then $K^{(2)}/k$ is green (in the terminology of theorem ~\ref{t:5-5}) if and only if  $\varprojlim_m A'(F_m)=X'(F_{\infty})$ is finite i.e. the Iwasawa  invariants $\lambda'(F_{\infty})$ and $\mu'F_{\infty})$ are simultaneously null.
\end{thm}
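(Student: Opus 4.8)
The plan is to apply Lemma~\ref{l:5-10} to $M:=X'(K^{(2)})$, viewed over $S=\Lambda^{(2)}$ written as $R[[t]]$, where $R:=\Lambda=\Zfield_p[[\Gal(F_{\infty}/k)]]$ and $t=\gamma-1$ for a fixed topological generator $\gamma$ of $\Gamma\cong\Gal(K^{(2)}/F_{\infty})$; note that $K^{(2)}=\bigcup_m F_m^{\text{cyc}}$, so this $\Gamma$ is the cyclotomic direction over $F_{\infty}$, and $R$ is a regular local, hence noetherian Krull, domain. Since $K^{(2)}=\bigcup_m F_m^{\text{cyc}}$ one has $M=\varprojlim_m X'(F_m^{\text{cyc}})$ along the norms of the $\Zfield_p$-extension $K^{(2)}/k^{\text{cyc}}$; as every module met below is profinite, this and the subsequent inverse limits are exact. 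It is classical that $M$ is a finitely generated torsion $\Lambda^{(2)}$-module, and $K^{(2)}$ satisfies \emph{(Dec)} by Lemma~\ref{l:5-5}. Lemma~\ref{l:5-10} then reduces the pseudo-nullity of $M$ over $\Lambda^{(2)}$ --- i.e. the greenness of $K^{(2)}/k$ --- to the equality $\charac_R(M_t)=\charac_R(M/tM)$, once $M/tM$ is known to be $R$-torsion.

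First I would compute the $t$-torsion submodule $M_t=M^{\Gamma}$ (the kernel of $\gamma-1$ on $M$). Kernels commute with inverse limits, so $M_t=\varprojlim_m X'(F_m^{\text{cyc}})^{\Gamma}$. The Kuz'min--Gross conjecture for each $F_m$, which is among the hypotheses, makes every $X'(F_m^{\text{cyc}})^{\Gamma}$ finite and equal to $\big(X'(F_m^{\text{cyc}})^0\big)^{\Gamma}$; hence $M_t=\varprojlim_m\big(X'(F_m^{\text{cyc}})^0\big)^{\Gamma}$, which is finite by assumption, so $\charac_R(M_t)=(1)$.

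Next I would identify the coinvariants $M/tM=M_{\Gamma}=\varprojlim_m X'(F_m^{\text{cyc}})_{\Gamma}$ (again by profiniteness, the finiteness of the $X'(F_m^{\text{cyc}})^{\Gamma}$ killing the relevant $\varprojlim^1$). Here the decisive ingredient is the cyclotomic descent of Proposition~\ref{p:10-5}: for each $m$ there is an exact sequence $0\to\Psi(F_m^{\text{cyc}})\to X'(F_m^{\text{cyc}})_{\Gamma}\to A'(F_m)\to C_m\to 0$ with $C_m$ finite and zero for $m\gg 0$. Taking $\varprojlim_m$ yields $0\to\varprojlim_m\Psi(F_m^{\text{cyc}})\to M/tM\to X'(F_{\infty})\to 0$, with $X'(F_{\infty})=\varprojlim_m A'(F_m)$. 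Since $\varprojlim_m\Psi(F_m^{\text{cyc}})$ is finite by hypothesis and $X'(F_{\infty})$ is finitely generated and torsion over $R$, this shows that $M/tM$ is $R$-torsion and that $\charac_R(M/tM)=\charac_R\big(X'(F_{\infty})\big)$.

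Finally, Lemma~\ref{l:5-10} gives that $K^{(2)}/k$ is green iff $\charac_R(M_t)=\charac_R(M/tM)$, i.e. iff $\charac_R\big(X'(F_{\infty})\big)=(1)$. As $R\cong\Zfield_p[[\Gal(F_{\infty}/k)]]$ is a two-dimensional regular local ring, a finitely generated torsion $R$-module has trivial characteristic ideal exactly when it is finite; so the condition becomes the finiteness of $X'(F_{\infty})=\varprojlim_m A'(F_m)$, which by Iwasawa's theorem for $F_{\infty}/k$ is equivalent to $\lambda'(F_{\infty})=\mu'(F_{\infty})=0$. I expect the main difficulty to lie in Step~3: establishing the cyclotomic-descent sequence uniformly over all layers $F_m$ at once (this is Proposition~\ref{p:10-5}, together with the stabilization of the obstruction kernels $\Psi_n$ and the norm maps between them) and checking that the passage to $\varprojlim_m$ stays exact --- which is precisely where the profiniteness of all the modules in play, and the Kuz'min--Gross hypothesis forcing the relevant groups to be finite, do the real work.
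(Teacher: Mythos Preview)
Your overall strategy --- reducing via Lemma~\ref{l:5-10} to the comparison of $\charac_{\Lambda}(M^{\Gamma})$ and $\charac_{\Lambda}(M_{\Gamma})$ --- is exactly the paper's, and your computation of $M^{\Gamma}$ in Step~2 is correct (indeed cleaner than the paper's route through the double-indexed system $A'(K_{m,n})^{G_{m,n}}$). The identification $M_{\Gamma}=\varprojlim_m X'(F_m^{\text{cyc}})_{\Gamma}$ is also fine, by the profiniteness/Mittag--Leffler argument you sketch.

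The genuine gap is in Step~3, where you write the exact sequence
\[
0\to\Psi(F_m^{\text{cyc}})\to X'(F_m^{\text{cyc}})_{\Gamma}\to A'(F_m)\to C_m\to 0.
\]
By definition (see Definition~\ref{d:10-5}), the kernel of $X'(F_m^{\text{cyc}})_{\Gamma}\to A'(F_m)$ is $\Psi(F_m)$, \emph{not} $\Psi(F_m^{\text{cyc}})$. Proposition~\ref{p:10-5}(2) only identifies $\Psi(K_n)$ with $\Psi(K^{\text{cyc}})$ for $n\gg 0$ \emph{within a fixed cyclotomic tower}; here you are sitting at the bottom layer $n=0$ of each tower $F_m^{\text{cyc}}/F_m$, and varying $m$ does not help. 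The discrepancy is measured by $\Phi(F_m):=\Ker\big(\Psi(F_m)\to\Psi(F_m^{\text{cyc}})\big)$, which need not vanish. This is precisely what Lemma~\ref{l:10-5} is for: it gives an injection $\Phi(F_m)\hookrightarrow X'(F_m^{\text{cyc}})^0_{\Gamma}$, and since the latter has the same order as $(X'(F_m^{\text{cyc}})^0)^{\Gamma}$, the finiteness hypothesis on $\varprojlim_m(X'(F_m^{\text{cyc}})^0)^{\Gamma}$ forces $\varprojlim_m\Phi(F_m)$ to be finite. Only then does $\varprojlim_m\Psi(F_m)$ become pseudo-isomorphic to $\varprojlim_m\Psi(F_m^{\text{cyc}})^{\Gamma}$, which is finite by the second hypothesis. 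Without this step your argument does not close, and in fact your proof never explains why the hypothesis on $\varprojlim_m(X'(F_m^{\text{cyc}})^0)^{\Gamma}$ should be needed in the coinvariant computation at all.

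A secondary point: your claim ``$C_m=0$ for $m\gg 0$'' is not supported by Proposition~\ref{p:10-5}, whose asymptotics are in $n$ (the cyclotomic direction), not in $m$. You only need $\varprojlim_m C_m$ to be $\Lambda$-pseudo-null for the equivalence of finiteness, but that still requires an argument.
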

(recall that \emph{special} just means that $K^{(2)}$ contains $k^{\text{cyc}}$ and is normal over $\Qfield$, see lemma~\ref{l:5-5}).\medskip

The proof will proceed in several steps. For simplicity, denote  by $\big({X'}^{(2)}\big)_{\Gamma}$ and $\big({X'}^{(2)}\big)^{\Gamma}$ respectively the modules  $X'\big(K^{(2)}\big)_{\Gamma}=\varprojlim_{m,n}A'\big(K_{m,n}\big)_{G_{m,n}}$ and $X'\big(K^{(2)}\big)^{\Gamma}=\varprojlim_{m,n}A'\big(K_{m,n}\big)^{G_{m,n}}$ (w.r.t. the norms). Since the property (\emph{Dec)}) is valid because $K^{(2)}$ is special, no $p$-place is totally decomposed in $F_{\infty}/k$, hence (see \cite{ba_ba_lo1}, lemma~3.2) $\big({X'}^{(2)}\big)_{\Gamma}$ is $\Zfield_p[[\Gamma]]$-torsion, and lemma~\ref{l:5-10} will be applicable.
%%
%%paragraph
%%
\paragraph{Miscellaneous results on (co)capitulation} { \ }\\
Let $K$ be a number field, $\Gamma=\Gal\big(K^{\text{cyc}}/K\big)$, $K_n=\text{ the fixed field of }\Gamma^{p^n}$, $A'n=A'(K_n)$ and $A'\big(K^{\text{cyc}}\big)=\varinjlim_nA'_n$ as before. For $h\geq n\geq 0$ write $G^h_n=\Gamma^{p^n}/\Gamma^{p^h}=\Gal\big(K_h/K_n\big)$. The \emph{kernel and cokernel of capitulation} associated with the extension $K_h/K_n$ are defined by the exact sequence
\begin{center}
\begin{tikzcd}[column sep=scriptsize]
0\arrow[r] & {\rm Cap}(K_h/K_n) \arrow[r] & A'_n \arrow[r, , "j_{n,h}"] & (A'_h)^{G_n^h} \arrow[r] & {\rm Cocap}(K_h/K_n)\arrow[r] &0
\end{tikzcd}
\end{center}
One defines analogously ${\rm Cap}\big(K^{\text{cyc}}/K_n\big)$ and  ${\rm Cocap}\big(K^{\text{cyc}}/K_n\big)$. The asymptotic links between the (co)kernels in the capitulation exact sequence can be summarized in the following
%%
%%proposition
%%
\begin{prop}\label{p:10-5} Assume the Kuz'min-Gross conjecture for all $n$ large\\ enough. Then:
\begin{description}
\item[(1)] The kernel ${\rm Cap}\big(K^{\text{cyc}}/K_n\big)$ stabilize w.r.t. norm maps between ideal classes and become isomorphic to $X'\big(K^{\text{cyc}}\big)^0$ for $n\gg 0$;
\item[ (2)] The kernel $\Psi(K_n)$ of the natural maps $X'\big(k^{\text{cyc}}\big)_{\Gamma^{p^n}}\rightarrow A'_n $ stabilize w.r.t. extension maps and become isomorphic to $\Psi\big(K^{\text{cyc}}\big)$ for $n\gg 0$. More precisely, for $h\geq n\gg0$, the map $v_{h,n}:\ X'\big(K^{\text{cyc}}\big)_{\Gamma^{p^n}}\rightarrow X'\big(K^{\text{cyc}}\big)_{\Gamma^{p^h}}$ of definition~\ref{d:10-5} induces the isomorphisms $\Psi_n\cong\Psi_h\cong\Psi\big(K^{\text{cyc}}$\big);
\item[(3)] The cokernels ${\rm Cocap}\big(K^{\text{cyc}}/K_n\big)$ also stabilize and become isomorphic to $\Psi\big(K^{\text{cyc}}\big)$. More precisely, the isomorphism is 
\begin{equation*}
{\rm Cocap}\big(K^{\text{cyc}}/K_n\big)\cong\Psi\big(K^{\text{cyc}}\big)_{\Gamma^{p^n}}\cong \Psi\big(K^{\text{cyc}}\big)\text{ for } n\gg 0\, .
\end{equation*}.
\end{description}
\end{prop}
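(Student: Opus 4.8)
The plan is to recast everything in terms of the finitely generated torsion $\Lambda$-module $X':=X'(K^{\text{cyc}})$, where $\Lambda=\Zfield_p[[\Gamma]]\cong\Zfield_p[[T]]$, $T=\gamma-1$ and $\omega_j=(1+T)^{p^j}-1$, so that $X'_{\Gamma^{p^n}}=X'/\omega_nX'$ and $(X')^{\Gamma^{p^n}}=X'[\omega_n]$, and then to run a d\'evissage assembling the relevant classical facts (Iwasawa, Kuz'min). First I would record the two consequences of the hypothesis. Since $\omega_n=T\Phi_p(1+T)\cdots\Phi_{p^n}(1+T)$ is a product of distinguished polynomials, finiteness of $X'[\omega_n]$ for all $n\gg 0$ is equivalent to $\charac_{\Lambda}(X')$ being prime to $T$ and to every $\Phi_{p^j}(1+T)$, hence prime to every quotient $\omega_h/\omega_n$; therefore $X'[g]$ is finite, and so contained in the maximal finite submodule ${X'}^0$ of $X'$, for $g=\omega_n$ or $g=\omega_h/\omega_n$. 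Feeding $0\to{X'}^0\to X'\to X'/{X'}^0\to 0$ into the $\Gamma^{p^n}$-cohomology sequence and using that $X'/{X'}^0$ has no nonzero finite submodule (so $(X'/{X'}^0)[\omega_n]=0$) gives $(X')^{\Gamma^{p^n}}=({X'}^0)^{\Gamma^{p^n}}={X'}^0$ once $\Gamma^{p^n}$ acts trivially on ${X'}^0$; then $\omega_h/\omega_n$ acts on ${X'}^0$ as $p^{h-n}$, and a short manipulation yields ${X'}^0\cap\omega_nX'=0$ for $n\gg 0$.

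Next comes the finite-level dictionary, which is pure class field theory. Write $H'_n$ for the maximal abelian unramified $p$-split pro-$p$-extension of $K_n$ (so $A'_n=\Gal(H'_n/K_n)$) and $L'_\infty$ for its analogue over $K^{\text{cyc}}$ (so $X'=\Gal(L'_\infty/K^{\text{cyc}})$ and $L'_\infty=\bigcup_h H'_hK^{\text{cyc}}$). For $n\gg 0$ every $p$-ramified prime of $K$ is totally ramified in $K^{\text{cyc}}/K_n$, hence $H'_n\cap K^{\text{cyc}}=K_n$, the group $\Gal(H'_nK^{\text{cyc}}/K_n)=\Gal(H'_n/K_n)\times\Gamma^{p^n}$ is abelian, and $H'_nK^{\text{cyc}}\subseteq L'_\infty$; restriction then gives a surjection $X'\twoheadrightarrow\Gal(H'_nK^{\text{cyc}}/K^{\text{cyc}})\cong\Gal(H'_n/K_n)=A'_n$ which, $\Gamma^{p^n}$ acting trivially on this abelian quotient, factors through a canonical surjection $\pi_n\colon X'_{\Gamma^{p^n}}\twoheadrightarrow A'_n$ with finite kernel $\Psi_n$. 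Running this through the tower, $\pi_\bullet$ intertwines the extension maps $j_{n,h}$ with the maps $v_{h,n}$ of Definition~\ref{d:10-5} (multiplication by $\omega_h/\omega_n$ on coinvariants) and the arithmetic norms $N_{h,n}$ with the canonical projections $X'_{\Gamma^{p^h}}\to X'_{\Gamma^{p^n}}$; moreover the idelic exact sequence relating $(p)$-units, decomposition at $p$ and $A'$ (the finite-level form of the sequence $0\to D\to Y\to X'\to 0$ of the proof of Theorem~\ref{t:5-5}) describes $\Psi_n$ cohomologically in terms of the decomposition subgroups of the $p$-primes and of the (co)capitulation of ${X'}^0$ — which is the lever for proving that it stabilises.

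From here the three assertions are a d\'evissage. For (1): transporting $\mathrm{Cap}(K_h/K_n)=\Ker(j_{n,h})$ through $\pi_\bullet$ and using the elementary identity $\Ker\big(M/\omega_nM\xrightarrow{\times(\omega_h/\omega_n)}M/\omega_hM\big)=M[\omega_h/\omega_n]/\big(M[\omega_h/\omega_n]\cap\omega_nM\big)$ with $M=X'$, together with $X'[\omega_h/\omega_n]={X'}^0$ for $h\gg n\gg 0$ (an increasing union of submodules of ${X'}^0$, exhausting it once $p^{h-n}$ kills ${X'}^0$) and ${X'}^0\cap\omega_nX'=0$, one finds $\Ker(v_{h,n})={X'}^0$ inside $X'_{\Gamma^{p^n}}$; since moreover the capitulating classes are genuine ideal classes, i.e. ${X'}^0\cap\Psi_n=0$ for $n\gg 0$, this gives $\mathrm{Cap}(K^{\text{cyc}}/K_n)=\varinjlim_h\mathrm{Cap}(K_h/K_n)\cong{X'}^0$, compatibly with norms. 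For (2): $v_{h,n}$ restricts to $\Psi_n\to\Psi_h$; its kernel is $\Psi_n\cap{X'}^0=0$ by the same fact, and its surjectivity for $h\ge n\gg 0$ comes from the cohomological description of $\Psi_n$ together with the stabilisation of the decomposition data and of the capitulation of ${X'}^0$, so $v_{h,n}\colon\Psi_n\xrightarrow{\sim}\Psi_h$; since $\Psi(K^{\text{cyc}})=\varinjlim_n\Psi_n$ along these maps, $\Psi_n\cong\Psi(K^{\text{cyc}})$ and in particular $\Psi(K^{\text{cyc}})$ is finite.

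For (3), which I expect to be the hardest step, I would pass to the limit over $h$ in the four-term capitulation sequences: colimits being exact and each $A'_h$ finite, $\varinjlim_h(A'_h)^{G^h_n}$ is canonically $\big(A'(K^{\text{cyc}})\big)^{\Gamma^{p^n}}$, so that $\mathrm{Cocap}(K^{\text{cyc}}/K_n)=\varinjlim_h\mathrm{Cocap}(K_h/K_n)=\coker\big(A'_n\to(A'(K^{\text{cyc}}))^{\Gamma^{p^n}}\big)$. Applying $\Gamma^{p^n}$-(co)homology to $0\to\Psi(K^{\text{cyc}})\to\varinjlim_hX'_{\Gamma^{p^h}}\to A'(K^{\text{cyc}})\to 0$ (whose kernel is $\varinjlim_h\Psi_h=\Psi(K^{\text{cyc}})$, finite by (2)) and comparing with the image of $A'_n=X'_{\Gamma^{p^n}}/\Psi_n$ reads off $\mathrm{Cocap}(K^{\text{cyc}}/K_n)\cong\Psi(K^{\text{cyc}})_{\Gamma^{p^n}}$, which equals $\Psi(K^{\text{cyc}})$ once the action is trivial. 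The real obstacle, and the true role of the Kuz'min--Gross hypothesis, is the control of the finite error terms: that $\charac_{\Lambda}(X')$ is prime to every $\Phi_{p^j}(1+T)$ (so $X'[\omega_h/\omega_n]$ does not grow with $h$), that ${X'}^0\cap\omega_nX'=0$ and ${X'}^0\cap\Psi_n=0$ for $n\gg 0$, and — for (3) — that $\varinjlim_h$ commutes with the formation of $\Gamma^{p^n}$-invariants on $A'(K^{\text{cyc}})$ up to the now-finite module $\Psi(K^{\text{cyc}})$; the genuinely technical point is to make ``$n\gg 0$'' effective enough that all the transition maps become honest isomorphisms rather than mere pseudo-isomorphisms.
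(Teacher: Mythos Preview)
The paper does not give a self-contained proof: it cites classical work (Greenberg, Iwasawa, Kuz'min) for (1) and \cite{fl_mo_ng1} for (2) and (3). The actual machinery behind those references --- and this is visible in the proof of Lemma~\ref{l:10-5} immediately following --- is not pure $\Lambda$-module d\'evissage but the Sinnott exact sequence \eqref{e:10-5} together with Kato's results on universal norms and the cohomological triviality of $\widetilde{U'_n}$.

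Your $\Lambda$-module skeleton is correct (Kuz'min--Gross does force $\charac_{\Lambda}(X')$ prime to every $\omega_h/\omega_n$, hence $X'[\omega_h/\omega_n]={X'}^0$ and ${X'}^0\cap\omega_nX'=0$ for $n\gg 0$), but the key arithmetic input is asserted rather than proved. The claim ${X'}^0\cap\Psi_n=0$ inside $X'_{\Gamma^{p^n}}$ is \emph{exactly} the vanishing $\Phi_n=0$ of Lemma~\ref{l:10-5}, since $\Phi_n=\Ker\big(\Psi_n\to\varinjlim_h\Psi_h\big)=\Psi_n\cap\bigcup_h\Ker(v_{h,n})=\Psi_n\cap{X'}^0$. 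You invoke this both for the injectivity in (2) and (via the snake lemma) for (1), but your justification (``the capitulating classes are genuine ideal classes'') is circular --- it presupposes (1). The paper's route to $\Phi_n=0$ is to compare the Sinnott sequences at levels $n$ and $h$, use $M_n\cong(M_h)^{G^h_n}$ via Shapiro, and identify the resulting error term with $({X'}^0)_{\Gamma^{p^n}}\to{\rm Cap}(K^{\text{cyc}}/K_n)$; none of this is formal $\Lambda$-module theory.

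Likewise, the surjectivity of $v_{h,n}\colon\Psi_n\to\Psi_h^{G^h_n}$ in (2) is not a consequence of ``the cohomological description'' you gesture at: in Lemma~\ref{l:10-5} it is again the snake lemma on the Sinnott sequences, with the isomorphism $M_n\cong(M_h)^{G^h_n}$ forcing surjectivity on the quotient. Without the Sinnott-type input you have no handle on $\Psi_n$ beyond its definition, and (1), (2), (3) remain entangled. Your sketch of (3) via $\Gamma^{p^n}$-cohomology of $0\to\Psi(K^{\text{cyc}})\to\varinjlim_hX'_{\Gamma^{p^h}}\to A'(K^{\text{cyc}})\to 0$ is along the right lines and close to what \cite{fl_mo_ng1} does, but it only works once (2) is in hand.
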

%%
%%proof
%%
\begin{proof}  Property {\bfseries (1)} is a classical result (\cite{rgre1}, \cite{kiwa2}, \cite{ikuz1}). For property {\bfseries (2)} and {\bfseries (3)}, see \cite{fl_mo_ng1}, lemma 1.3 and theorem 1.4.
\end{proof}
All the above assertions are asymptotic in nature. The proof of our technical theorem~\ref{t:10-5} will require the following more precise
%%
%%lemma
%%
\begin{lm}\label{l:10-5} For any $n\geq 0$, the kernel  $\Phi_n=\Phi(K_n)$  of the natural map $\Psi_n=\Psi(K_n)\rightarrow \Psi(K^{\text{cyc}})$ is described by two exact sequences
\begin{gather*}
0\rightarrow \Phi_n\rightarrow \Psi_n\rightarrow \Psi\big(K^{\text{cyc}}\big)^{\Gamma^{p^n}}\rightarrow 0\\
\text{and}\\
0\rightarrow\Phi_n\rightarrow X'\big(K^{\text{cyc}}\big)^0_{\Gamma^{p^n}}\rightarrow {\rm Cap}\big(K^{\text{cyc}}/K_n\big)\rightarrow \dots
\end{gather*}
For $n\gg0$ we have $\Phi_n=(0)$.
\end{lm}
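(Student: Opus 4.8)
The plan is to extract both exact sequences by comparing, for $h\ge n$, the class-field-theory descent sequence at level $n$ with the one at level $h$ through a snake-lemma argument, and then to read off $\Phi_n=(0)$ for $n\gg0$ from the stabilization statements of Proposition~\ref{p:10-5}. Write $X:=X'(K^{\text{cyc}})$. For $n$ large, class field theory furnishes the short exact sequence $0\to\Psi_n\to X_{\Gamma^{p^n}}\xrightarrow{\pi_n}A'_n\to0$ (for small $n$ one replaces $A'_n$ by the image of $\pi_n$); for $h\ge n$ the maps induced by $v_{h,n}$ on the middle terms and the extension maps $j_{n,h}$ on the class groups, which are compatible by the discussion preceding Proposition~\ref{p:10-5}, give a morphism of short exact sequences. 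Taking the direct limit in $n$ one obtains $0\to\Psi(K^{\text{cyc}})\to\varinjlim_nX_{\Gamma^{p^n}}\to A'(K^{\text{cyc}})\to0$, the right-hand cokernel vanishing because $\pi_n$ is onto for $n\gg0$.

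Applying the snake lemma to the morphism from the level-$n$ to the level-$h$ descent sequence gives a six-term exact sequence of the kernels and cokernels of $v_{h,n}$ and $j_{n,h}$. In it, $\Ker(v_{h,n}\colon\Psi_n\to\Psi_h)$ increases, as $h\to\infty$, to $\Phi_n$ (by the definition of $\Phi_n$ and Proposition~\ref{p:10-5}(2)); the kernel $\Ker(v_{h,n}\colon X_{\Gamma^{p^n}}\to X_{\Gamma^{p^h}})$ increases to $X^0_{\Gamma^{p^n}}$, where $X^0:=X'(K^{\text{cyc}})^0$ is the maximal finite submodule --- here one uses that under Kuz'min-Gross the $v_{h,n}$-torsion of $X$ is finite, hence contained in $X^0$, together with the fact that $X/X^0$ has no nonzero finite submodule; and $\Ker(j_{n,h}\colon A'_n\to A'_h)={\rm Cap}(K_h/K_n)$ increases to ${\rm Cap}(K^{\text{cyc}}/K_n)$. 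Passing to the limit in $h$, and noting that $\pi_n$ already restricts to a map $X^0\to{\rm Cap}(K^{\text{cyc}}/K_n)$ (compare Proposition~\ref{p:10-5}(1)), one gets the second exact sequence of the lemma, the trailing "$\cdots$" being the continuation of the snake sequence through $\varinjlim_h\Coker(v_{h,n}\colon\Psi_n\to\Psi_h)$. The first exact sequence then follows: the snake identifies $\Psi_n/\Phi_n$ with the image of $\Psi_n$ in $\Psi(K^{\text{cyc}})$, and since $\gamma^{p^n}\equiv1$ on $X_{\Gamma^{p^n}}$ the group $\Gamma^{p^n}$ acts trivially on $\Psi_n$, so this image lies in $\Psi(K^{\text{cyc}})^{\Gamma^{p^n}}$; the reverse inclusion comes from a further chase using the capitulation sequence $0\to{\rm Cap}(K^{\text{cyc}}/K_n)\to A'_n\to A'(K^{\text{cyc}})^{\Gamma^{p^n}}\to{\rm Cocap}(K^{\text{cyc}}/K_n)\to0$ together with Proposition~\ref{p:10-5}(3).

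Finally, $\Phi_n=(0)$ for $n\gg0$ is immediate: by Proposition~\ref{p:10-5}(2), for $n$ large $v_{h,n}$ identifies $\Psi_n$ with the stable value $\Psi(K^{\text{cyc}})$, so $\Psi_n\to\Psi(K^{\text{cyc}})$ is an isomorphism and $\Phi_n=0$; equivalently, for $n\gg0$ one has $X^0_{\Gamma^{p^n}}=X^0\cong{\rm Cap}(K^{\text{cyc}}/K_n)$ by Proposition~\ref{p:10-5}(1), with the displayed map an isomorphism. I expect the main obstacle to be carrying out the chase uniformly in $n$, not only for $n\gg0$: one must control the Tate-cohomology terms $\widehat H^i(\Gamma^{p^n}/\Gamma^{p^h},-)$ that appear when (co)invariants at the levels $n$ and $h$ are compared, make the reverse inclusion in the first sequence precise, and cope with the failure of $\pi_n$ to be onto for small $n$. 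Proposition~\ref{p:10-5} is precisely what forces all of these to collapse for $n$ large; the content of the lemma is the sharper description, valid for every $n\ge0$.
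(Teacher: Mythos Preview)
Your approach is genuinely different from the paper's, and while the second exact sequence can be obtained along your lines, the first one has a real gap.

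\textbf{The paper's route.} The paper does not work with the defining sequence $0\to\Psi_n\to X_{\Gamma^{p^n}}\to A'_n\to 0$ at all. Instead it uses the \emph{Sinnott exact sequence}
\[
0\longrightarrow \overline{U'_n}/\widehat{U'_n}\longrightarrow M_n:=\widetilde{\bigoplus_{v\mid p}}\,\overline{K^*_{n,v}}/\widehat{K^*_{n,v}}\longrightarrow \Psi_n\longrightarrow 0,
\]
where the middle term is built from local universal norms. The key arithmetic input is the cohomological triviality of local universal norms (Kato, or a direct computation), which yields $M_n\cong (M_h)^{G_n^h}$ for all $h\ge n$. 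The snake lemma then gives \emph{immediately} that $\Psi_n\to(\Psi_h)^{G_n^h}$ is surjective for every $h$, hence $\Psi_n\twoheadrightarrow\Psi(K^{\mathrm{cyc}})^{\Gamma^{p^n}}$ for every $n\ge 0$. The kernel $\Phi_n$ is then identified with a cokernel of $p$-unit groups, and the second sequence is extracted from the filtration $\widetilde{U'_h}\subset\widehat{U'_h}\subset\overline{U'_h}$ together with $\widehat{U'_h}/\widetilde{U'_h}\cong X^{\Gamma^{p^h}}$ and $H^1(\Gamma^{p^n},\varinjlim\overline{U'_h})\cong{\rm Cap}(K^{\mathrm{cyc}}/K_n)$.

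\textbf{Where your argument is fine.} Your derivation of the second sequence via the snake lemma on $0\to\Psi_n\to X_{\Gamma^{p^n}}\to A'_n\to 0$ is basically correct, but one point deserves care: the direct limit of $\ker(v_{h,n})$ is the \emph{image} of $X^0$ in $X_{\Gamma^{p^n}}$, namely $X^0/(X^0\cap\omega_n X)$, not a priori $(X^0)_{\Gamma^{p^n}}=X^0/\omega_nX^0$. These coincide because $(X/X^0)[\omega_n]=0$: indeed $(X/X^0)[\omega_n]$ injects into the finite group $(X^0)_{\Gamma^{p^n}}$ by the snake lemma for $\omega_n$, and $X/X^0$ has no nonzero finite submodule. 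You should make this step explicit.

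\textbf{The gap.} For the first sequence you only show that the image of $\Psi_n$ lies in $\Psi(K^{\mathrm{cyc}})^{\Gamma^{p^n}}$; the surjectivity for \emph{every} $n\ge 0$ is asserted to follow from ``a further chase using the capitulation sequence together with Proposition~\ref{p:10-5}(3)'', but no such chase is given, and I do not see how to complete it with the tools you have on the table. Proposition~\ref{p:10-5}(3) controls $\Psi(K^{\mathrm{cyc}})_{\Gamma^{p^n}}$ via ${\rm Cocap}$, not the invariants, and the snake sequence you obtain relates ${\rm Cap}$ to $\mathrm{coker}\bigl(\Psi_n\to\Psi(K^{\mathrm{cyc}})\bigr)$, not to $\mathrm{coker}\bigl(\Psi_n\to\Psi(K^{\mathrm{cyc}})^{\Gamma^{p^n}}\bigr)$. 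To close the gap along your lines you would essentially need $X_{\Gamma^{p^n}}\to\bigl(\varinjlim_h X_{\Gamma^{p^h}}\bigr)^{\Gamma^{p^n}}$ to be surjective for every $n$, and that is not a formal consequence of what you have written. In the paper this surjectivity is obtained for free from $M_n\cong(M_h)^{G_n^h}$, which is exactly the local class-field-theory input your approach avoids. Your final paragraph correctly identifies this as ``the main obstacle'', but the lemma claims the sequences for all $n\ge 0$, and your sketch only delivers them for $n\gg 0$.
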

%%
%%proof
%%
\begin{proof}
Let us come back to the decomposition exact sequence of class field theory introduced in the proof of theorem~\ref{t:5-5} and modify slightly the notations: $\overline{K^*_{n,v}}=$pro-p-completion of $K^*_{n,v}$; $\widehat{K^*_{n,v}}=$the subgroup of universal norms, such that $\overline{K^*_{n,v}}/\widehat{K^*_{n,v}}\cong \Gamma^{p^n}$ by local class field theory; $\overline{U'_n}=\overline{U'_{L_n}};\widehat{U'_n}=$ the kernel of the diagonal map $\overline{U'_n}\rightarrow \bigoplus_{v\divi p}\overline{K^*_{n,v}}/\widehat{K^*_{n,v}}$, which is the subgroup of $\overline{U'_n}$ consisting of elements which are universal norms locally everywhere. Because of the product formula, the image of the diagonal map is actually contained in the kernel $\widetilde{\bigoplus_{v\divi p}} \overline{K^*_{n,v}}$ of the linear form ``sum of coordonates'', and we have the so called Sinnott exact sequence of $G_n$-modules (\cite{fe_gr1}, appendix)
\begin{equation}\label{e:10-5}\tag{$\text{Sinn}_n$}
0\rightarrow \overline{U'_n}/\widehat{U'_n}\rightarrow M_n=\widetilde{\bigoplus_{v\divi p}} \overline{K^*_{n,v}}/\widehat{K^*_{n,v}}\rightarrow \Psi_n\rightarrow 0
\end{equation} 
where the middle term is an induced module.\medskip

For $h\geq n$, the natural maps between the levels $h$ and $n$ allow to compare the sequences ($\text{Sinn}_n$) and ($\text{Sinn}_h$). Denote $G_n^h=\Gal(K_h/K_n)$ and let $G_{n,v}^h$ be the corresponding local Galois group, more precisely $G_{n,v}^{h}=\Gal\big(K_{h,w}/K_{n,v}\big)$ for an arbitrarily fixed choice of $w\divi v$. Il is important to note that by definition of $\widehat{K^*_{h,w}}$, $G_{n,w}^h$ acts trivially on $\overline{K^*_{h,w}}/\widehat{K^*_{h,w}}\cong\Gamma^{p^h}$. It can be shown ``by hand'' (see \cite{tngu3}, proposition~2.2) or, more easily, by using cohomological machinery  for universal norm (local here) as in \cite{kkat1}, (1b), p.553, that $\widehat{K^*_{n,v}}\cong (\widehat{K^*_{h,w}})^{G_{n,v}^h}$, thus by the cohomological triviality of the local universal norms, $\overline{K^*_{n,w}}/\widehat{K^*_{n,w}} \cong \big(\overline{K^*_{h,w}}/\widehat{K^*_{h,w}}\big)^{G_{n,v}^h} \cong \overline{K^*_{h,w}}/\widehat{K^*_{h,w}}$. We get from Shapiro's lemma that $M_n\cong ({M_h})^{G_n^h}$ (and $M_n\cong M_h$ for $h\geq n\gg 0$).  The snake lemma then shows that the map $\Psi_n\rightarrow{\Psi_h}^{G_n^h}$ is surjective and its kernel is isomorphic to the cokernel of the injective map $\overline{U'_n}/\widehat{U'_n}\rightarrow \big(\overline{U'_h} /\widehat{U'_h}\big)^{G_n^h}$. Letting $h$ grow to infinity, we have the exact sequences
\begin{equation*}
0\rightarrow \Phi_n\rightarrow \Psi_n\rightarrow \Psi\big(K^{\text{cyc}}\big)^{\Gamma^{p^n}}\rightarrow 0
\end{equation*}
(which is the first sequence of lemma~\ref{l:10-5}) and
\begin{equation*}
0\rightarrow \overline{U'_n} /\widehat{U'_n}\rightarrow \varinjlim_h\big( \overline{U'_h} /\widehat{U'_h}\big)^{G_n^h}\rightarrow \Phi_n\rightarrow 0\, .
\end{equation*}
To go further, let us introduce the subgroup $\widetilde{U'_n}$ of $\overline{U'_n}$ consisting of global universal norms, which is described  more precisely by co-descent as $\widetilde{U'_n}\cong (\varprojlim_h\overline{U'_h})_{\Gamma^{p^n}}$. It is known (see e.g. \cite{ikuz1}) that
\begin{itemize}
\item the $A$-module $\varprojlim_h\overline{U'_h}$ is isomorphic to $\Zfield_p(1)\bigoplus \Lambda^{1+r_2}$ or $\Lambda^{1+r_2}$ according as $K$ contains or not a primitive $p$-th root of unity, hence $\widetilde{U'}_n$ is $G_n$-cohomologically trivial;
\item the quotient $\widehat{U'_n}/\widetilde{U'_n}$ is canonically isomorphic to $X'\big(K^{\text{cyc}}\big)^{\Gamma^{p^n}}$ (this can be considered as a kind of obstruction to a Hasse principle in the present situation).
\end{itemize}
Put $V_h=\overline{U'_n}/\widehat{U'_h}$, $W_h=\widehat{U'_h}/\widetilde{U'_h}$, $X'_{\text{cyc}}=X'\big(K^{\text{cyc}}\big)$. Taking the $G_n^h$-cohomolo\-gy of the tautological exact sequence 
\begin{equation*}
0\rightarrow \big(X'_{\text{cyc}}\big)^{\Gamma^{p^h}}\rightarrow W_h\rightarrow V_h\rightarrow 0\, , 
\end{equation*}
we get
\begin{multline*}
0 \rightarrow \big(X'_{\text{cyc}}\big)^{\Gamma^{p^n}}\rightarrow (W_h)^{G_n^h}\rightarrow (V_n)^{G_n^h}\rightarrow H^1\big( G_{G^h_n}, (X'_{\text{cyc}})^{\Gamma^{p^h}}\big)\rightarrow\\ \rightarrow H^1(G_n^h, W_h)
\rightarrow H^1(G_n^h,V_h)\rightarrow \dots
\end{multline*}
By Kato's result on (global here) universal norms (\cite{kkat1}, \emph{op. cit.}), $\widetilde{U'_n}\cong (\widetilde{U'_h})^{G_n^h}$ hence  $W_n\cong (W_h)^{G_n^h}$ and our exact sequence becomes
\begin{multline*}
0 \rightarrow V_h \rightarrow (V_h)^{G_n^h}\rightarrow H^1\big( G_{G^h_n}, (X'_{\text{cyc}})^{\Gamma^{p^h}}\big)\rightarrow\\ \rightarrow H^1(G_n^h, W_h)
\rightarrow H^1(G_n^h,V_h)\rightarrow \dots
\end{multline*}
or taking $\varinjlim$ and recalling that $\big(X'_{\text{cyc}}\big)^{\Gamma^{p^h}}=\big((X'_{\text{cyc}})^0\big)^{\Gamma^{p^h}}$ by the Kuz'min-Gross conjecture:
\begin{multline}\label{e:10-10}\tag{\bfseries A}
0\rightarrow \Phi_n\rightarrow H^1\big(\Gamma^{p^n}, (X'_{\text{cyc}})^0\big) = {(X'_{\text{cyc}})^0}_{\Gamma^{p^n}}\rightarrow \\ \rightarrow H^1(\Gamma^{p^n}, \varinjlim_h W_h)\rightarrow H^1(\Gamma^{p^n},\varinjlim_h V_h)\dots
\end{multline}
Taking into account the equality $H^1(G_n^h, W_h)=H^1(G_n^h, \overline{U'_h})$ because $ \overline{U'_h}$ is cohomologically trivial, we get from the above \eqref{e:10-10}, when $h$ grows to infinity, the exact sequence
\begin{equation*}
0\rightarrow \Phi_n\rightarrow {(X'_{\text{cyc}})^0}_{\Gamma^{p^n}}\rightarrow H^1(\Gamma^{p^n},\varinjlim_h \overline{U'_h})\rightarrow\dots
\end{equation*}
but the term $H^1(\Gamma^{p^n},\varinjlim_h \overline{U'_h})$ is known to be isomorphic to ${\rm Cap}(K^{\text{cyc}}/K_n)$ (e.g. \cite{ng_le1}, theorem~1.1), whence the second exact sequence of lemma~\ref{l:10-5}.\medskip

Let now $n$ grow to infinity. The last term in \eqref{e:10-10} becomes null because $G_n^h$ acts trivially on the $\Zfield_p$-free module $M_h$ (see above). The comparison  with \eqref{e:10-5} and a quick diagram chase immediately show the nullity of $H^1(G_n^h,V_h)$ hence the exact sequence
\begin{equation*}
0\rightarrow \Phi_n\rightarrow  (X'_{\text{cyc}})^0_{\Gamma^{p^n}}\rightarrow {\rm Cap}(K^{\text{cyc}}/K_n) \rightarrow 0\, .
\end{equation*}
But for $n \gg 0$, the rightmost map is an isomorphism so $\Phi_n=(0)$ (this also follows from proposition~\ref{p:10-5} {\bfseries (2)}).
\end{proof}
%%
%%proof
%%
\begin{proof}[\emph{\bfseries Proof of theorem~\ref{t:10-5}:}] Going back to the notations of the theorem (in particular, the base field is $k$), we must compute $\charac_{\Lambda}\big(X'^{(2)}\big)_{\Gamma}/\charac_{\Lambda}\big(X'^{(2)}\big)^{\Gamma}$ in order to use lemma~\ref{l:5-10}.\medskip

Write 
\begin{gather*}
K_{m,n}=F_m\cdot k_n, \ \Gamma_m=\Gal\big(F^{\text{cyc}}_m/F_m\big)\cong \Gamma =\Gal\big(k^{\text{cyc}}/k\big), \\ \Gamma_{m,n}=\Gal\big(F^{\text{cyc}}_m/K_{m,n}\big),\
 G_{m,n}=\Gal(K_{m,n}/F_m)\, .
\end{gather*}
 Recall from definition~\ref{d:10-5} that for all $m,n\geq 0$ 
\begin{gather*}
\Gamma_m=\Gal\big(F^{\text{cyc}}_m/F_{m}\big)\cong\Gamma=\Gal\big(k^{\text{cyc}}/k\big),\\
 \Gamma_{m,n} =\Gal\big(F^{\text{cyc}}_m/K_{m,n}\big)\cong \Gamma^{p^n},\ G_{m,n}=\Gal(K_{m,n}/F_m)
\end{gather*} (beware of a possible confusion with $G_n^h$ above). Again, it is recommanded to draw a Galois diagram.
%%
%%paragraph
%%
\paragraph{(i) Computation of $\big(X'^{(2)})^{\Gamma}$.} Taking $G_{ m,n}$-invariants in the first half of the cap-cocap exact sequence of proposition~\ref{p:10-5} relative to $F_m^{\text{cyc}}/K_{m,n}$, we get
\begin{equation*}
0\rightarrow {\rm Cap}\big(F_m^{\text{cyc}}/K_{m,n}\big)^{G_{m,n}} \rightarrow A'(K_{m,n})^{G_{m,n}}\rightarrow A'(F_m^{\text{cyc}})^{\Gamma}\rightarrow \dots
\end{equation*}
Let us compute $\varprojlim_{m,n}$ of the three term in this sequence. When taking  
\begin{equation*}
\varprojlim_{m,n}A'(F_m^{\text{cyc}})^{\Gamma}
\end{equation*}
 we pass first, for $m$ fixed, through the inverse limit $\varprojlim_{n}A'(F_m^{\text{cyc}})^{\Gamma}$ w.r.t. $p$-power-th maps, which is null because $A'(F_m^{\text{cyc}})^{\Gamma}$ is finite. 

As for $\displaystyle \varprojlim_{m,n}{\rm Cap}\big(F_m^{\text{cyc}}/K_{m,n}\big)^{G_{m,n}}$, we already know that, for $m$ fixed and $n$ large ${\rm Cap}\big(F_m^{\text{cyc}}/K_{m,n}\big)\cong X'\big(F^{\text{cyc}}_m\big)^0$, so that ultimately 
\begin{equation*}
 \big(X'^{(2)}\big)^{\Gamma}\cong \varprojlim_{m} \big(X'\big(F^{\text{cyc}}_m\big)^0\big)^{\Gamma}
 \end{equation*} 
 which is finite by hypothesis. Thus it remains to study the finiteness of $\big(X'^{(2)}\big)_{\Gamma}$.
%%
%%paragraph
%%
\paragraph{(ii) Computation of $\big(X'^{(2)}\big)_{\Gamma}$.} By definition 
\begin{equation*}
\big(X'^{(2)}\big)_{\Gamma}=\varprojlim_{m,n} A'(K_{m,n})_{G_{m,n}}\, .
\end{equation*} 
First and foremost, we must explain the relationship between 
\begin{equation*}
 \varprojlim_{m,n}\big(X'\big(F^{\text{cyc}}_m\big)_{\Gamma_{m,n}}\big)_{G_{m,n}}\text{ and } \varprojlim_{m}X'\big(F^{\text{cyc}}_m\big)_{\Gamma_{m}}\, .
\end{equation*}
Starting from the exact sequence
\begin{equation*}
0\rightarrow \Psi(K_{m,n}) \rightarrow X'\big( F^{\text{cyc}}_m\big)_{\Gamma_{m,n}}\rightarrow A'(K_{m,n})\rightarrow 0
\end{equation*}
of definition~\ref{d:10-5}, apply the snake lemma for the action of $G_{m,n}$ to derive the exact sequence of invariants-coinvariants 
\begin{multline*}
\dots \rightarrow A'(K_{m,n})^{G_{m,n}}\rightarrow \Psi(K_{m,n})_{G_{m,n}}\rightarrow \big(X'\big(F^{\text{cyc}}_m\big)_{\Gamma_{m,n}}\big)_{G_{m,n}}\rightarrow \\
\rightarrow A'(K_{m,n})_{G_{m,n}}\rightarrow 0\, .
\end{multline*}
Let us take $\varprojlim_{m,n}$ along a cofinal system of lexicographically ordered indices by considering the following diagram where , for $(m',n')\geq (m,n)$, the vertical connecting maps , generically denoted $N^{m',n'}_{m,n}$, are induced by norms. We obtain the following commutative diagram ({\bfseries B}):

\hspace{-60pt}
\begin{tikzpicture}[>=angle 90,scale=1,text height=1.5ex, text depth=0.25ex, x=0.5cm]
%%First place the nodes
\node (k1) at (0,0) {$A'(K_{m',n'})^{G_{m',n'}}$};
\node (k0) [left=of  k1] {$\Psi(K_{m',n'})_{G_{m',n'}}$};
\node (k-1) [left=of k0]  {${}$};
\node (a-1) [below=of k-1] {${}$};
\node (a1) [below=of k0] {$A'(K_{m,n})^{G_{m,n}}$};
\node (a2) [below=of k1] {$\Psi(K_{m,n})_{G_{m,n}} $};
\node (k2) [below=of a1] {$\big(X'(F^{\text{cyc}}_{m'})_{\Gamma_{m',n'}}\big)_{G_{m',n'}}$};
\node (a3) [below=of k2] {$ \big(X'(F^{\text{cyc}}_{m})_{\Gamma_{m,n}}\big)_{G_{m,n}}$};
\node (b1) [below=of a2] {$A'(K_{m',n'})_{G_{m',n'}}'$};
\node (b2) [right=of b1] {$0$};
\node (c1) [below=of b1] {$A'(K_{m,n})_{G_{m,n}}$};
\node (c2) [below=of b2] {$0$};
%%Draw the curvy black arrows
\draw[->,black]
(k1) edge[out=0,in=180,black]  (k2)
(a2) edge[out=0,in=180,black]  (a3);
%%Draw the black arrows
\draw[->, black, thick]
(k0) edge (k1)
(k2) edge (b1)
(c1) edge (c2)
(k0) edge (a1)
(k1) edge (a2)
(k2) edge (a3)
(a3) edge (c1)
(b1) edge (c1)
(a1) edge (a2)
(b1) edge (b2);
\draw[->,black,dashed]
(k-1) edge (k0)
(a-1) edge (a1);
\end{tikzpicture}

The connecting maps $N_{m,n}^{m',n'}$ between the extreme terms are naturally induced by the norm maps from $K_{m',n'}$ to $K_{m,n}$ which factorize as $N_{m,n}^{m',n'}=N_{m,n}^{m',n}\circ N_{m',n}^{m',n'}$ by going through $K_{m',n}$\, . \\
In the middle column, we may identify $\big(X'(F^{\text{cyc}}_{m})_{\Gamma_{m,n}}\big)_{G_{m,n}}$ with $X'\big(F^{\text{cyc}}_m\big)_{\Gamma_m}$, but then we must compare $\displaystyle\varprojlim _m X'\big(F^{\text{cyc}}_m\big)_{\Gamma_m}$ and  $\displaystyle\varprojlim _{m,n} \big(X'\big(F^{\text{cyc}}_m\big)_{\Gamma_{m,n}}\big)_{G_{m,n}}$ using the factorization of $N_{m,n}^{m',n'}$: more precisely $N_{m',n}^{m',n'}$ is induced by $\Id$ of $X'\big(F^{\text{cyc}}_{m'}\big)$ because the elements of $X'\big(F^{\text{cyc}}_{m'}\big)$ are already coherent normic systems w.r.t. the index $n$, whereas $N_{m,n}^{m',n}$ is induced by the norm from $X'\big(F^{\text{cyc}}_{m'}\big)$ to $X'\big(F^{\text{cyc}}_{m}\big)$.\\medskip

Thus taking $\displaystyle\varprojlim _{m,n}$ in diagram ({\bfseries B}) gives an exact sequence
\begin{multline*}
\dots\varprojlim _{m,n}A'(K_{m',n'})^{G_{m,n}}\rightarrow \varprojlim _{m,n}\Psi(K_{m,n})_{G_{m,n}} \rightarrow \varprojlim _{m,n} \big(X'(F^{\text{cyc}}_{m})_{\Gamma_{m,n}}\big)_{G_{m,n}}\cong \\
\cong\varprojlim _{m}X'(F^{\text{cyc}}_{m})_{\Gamma}\rightarrow \big(X'^{(2)}\big)_{\Gamma}\rightarrow 0\, .
\end{multline*}
But for $m$ given and  $n$ large $\Psi(K_{m,n})\cong \Psi\big(F^{\text{cyc}}_m\big)$ via the direct limit of the extension maps $\Psi(K_{m,n})\rightarrow \Psi(K_{m,h})$ induced by $\nu_{h,n}=\frac{\gamma^{p^h}-1}{\gamma^{p^n}-1}$ for $h\geq n$ ( see proposition~\ref{p:10-5} {\bfseries (2)}), so that $\Psi(K_{m,n})_{G_{m,n}}$ can be viewed as the co-invariant quotient of $\Psi(F^{\text{cyc}}_{m})$ under the action of $G_{m,n}$, more precisely $\Psi(K_{m,n})_{G_{m,n}}\cong \Psi(F^{\text{cyc}}_{m})/(\gamma^{p^h}-1)$, and the previous exact sequence reads
\begin{equation*}
\dots\big( X'^{(2)}\big)^{\Gamma}\rightarrow \varprojlim _{m}\Psi(F^{\text{cyc}}_{m})_{\Gamma} \rightarrow \varprojlim _{m} X'(F^{\text{cyc}}_{m})_{\Gamma}\rightarrow \big(X'^{(2)}\big)_{\Gamma}\rightarrow 0\, .
\end{equation*}
Because of our hypothesis on $\displaystyle  \varprojlim _{m}\Psi(F^{\text{cyc}}_{m})$, the finiteness of $\big(X'^{(2)}\big)_{\Gamma}$ becomes equivalent to that of $\varprojlim _{m} \big(X'(F^{\text{cyc}}_{m})\big)_{\Gamma}$.\medskip

We must now appeal to lemma~\ref{l:10-5} in order to work at the level $n=0$. Recall the two exact sequences
\begin{gather*}
0\rightarrow\Phi(F_m)\rightarrow X'\big(F^{\text{cyc}}_m\big)^0_{\Gamma}\rightarrow \mathrm{Cap}\big(F^{\text{cyc}}_m/F_m\big) \rightarrow\dots\\
\text{and}\\
0\rightarrow\Phi(F_m)\rightarrow \Psi(F_m)\rightarrow \Psi\big(F^{\text{cyc}}_m\big)^{\Gamma}\rightarrow 0\,
\end{gather*} 
Because $X'\big(F^{\text{cyc}}_m\big)^0_{\Gamma}$ and $\big(X'\big(F^{\text{cyc}}_m\big)^0\big)^{\Gamma}$ have the same order, the finiteness by hypothesis of $\displaystyle \varprojlim_m\big(X'\big(F^{\text{cyc}}_m\big)^0\big)^{\Gamma}$ implies that of  $\displaystyle \varprojlim_m\Phi(F_m)$, so $\displaystyle \varprojlim_m\Psi(F_m)$ becomes pseudo-isomorphic to $\displaystyle \varprojlim_m \Psi\big(F^{\text{cyc}}_m\big)^{\Gamma}$, which is finite by hypothesis. This means, by the very definition of $\Psi(F_m)$ as the kernel of $X'\big(F^{\text{cyc}}_m\big)\rightarrow A'(F_m)$, that $\displaystyle \varprojlim_mX'\big(F^{\text{cyc}}_m\big)_{\Gamma}$ is pseudo-isomorphic to $\displaystyle  \varprojlim_m A'(F_m)$, and shows, as desired, that $\big(X'^{(2)}\big)_{\Gamma}$ is finite if and only if $\displaystyle  \varprojlim_m A'(F_m)$ is finite.\end{proof}
%%
%%section
%%
\section{\large\label{sec15} Undecomposed $p$-primes and density properties.}
In the rest of the paper, our problem will be to provide families of number fields to which theorem~\ref{t:10-5} can apply. In principle, it ``suffices'' to exhibit a non cyclotomic $\Zfield_p$-extension $F_{\infty}/k$ such that $\lambda'(F_{\infty})=\mu'(F_{\infty})=0$. However this is not easy. Let $\Ecal(k)$ be the space of all $\Zfield_p$-extension of $k$ equipped with the Greenberg topology. It is easy to show that the desired extensions $F_{\infty}/k$ form an open  and closed  subspace of $\Ecal(k)$, but the point is that it could be empty. For commodity, let us baptize \emph{obstruction kernels} the three inverse limits $\displaystyle \varprojlim_m\big(X'\big(F^{\text{cyc}}_m\big)^0\big)^{\Gamma}$, $\displaystyle \varprojlim_m \Psi\big(F^{\text{cyc}}_m\big)$ and  $\displaystyle  \varprojlim_m A'(F_m)$, whose simultaneous finiteness allows to conclude that $K^{(2)}/k$ is green. Let us write (I), (II), (III) for these respective properties  and study their interdependence. Using lemma~\ref{l:10-5}, one can readily show that (II)+(III) imply (I), (III)+(I) imply the finiteness of  $\displaystyle \varprojlim_m \Psi(F_m)$, and (I)+(II) imply that this finiteness is equivalent to (III). Note the absence of a direct implication between (I) and (II) alone.\medskip

There are two possible ways to build a $\Zfield_p^2$-extension $K^{(2)}=k^{\text{cyc}}\cdot F_m$ such as in theorem~\ref{t:10-5}:
\begin{description}
\item[(i)] Given a base field $k$, build a special independent $\Zfield_p$-extension $F_{\infty}/k$ s.t. $K^{(2)}$ is green;
\item[(ii)] Construct an adequate base field $k$ s.t for any independent $\Zfield_p$-extension $F_{\infty}/k$, the compositum  $K^{(2)}$ of $k^{\text{cyc}}$ and $F_{\infty}$ is at least, in a precise sense, ``partially green'' (see \S\ref{sec25}).
\end{description}
We undertake the approach {\bfseries (i)} in this section. The ``undecomposed $p$-primes''  in the headline mean that that we are mainly interested in base fields $k$ such that no prime of $k$ above $p$ can decompose in $k^{\text{cyc}}$ (but $p$ can decompose in $k$). Let us first collect (without any pretention to exhaustivity) some miscellaneous known results depending on the decomposition of the rational primes $p$ in $k/\Qfield$:
%%
%%theorem
%%
\begin{prop}\label{p:15-1} \emph{\bfseries (1) Suppose that $p$ does not decompose in $k$} and $A(k)$ is null. Then $X(\tilde{k})$ is null,\\
\emph{\bfseries (2)} Suppose that $p$ does not decompose in $k$ and its class generates $A(k)$. Then $k$ satisfies \emph{GGC},\\
\emph{\bfseries (3)} Let $k$ be the abelian extension of an imaginary quadratic field $\kappa$ such that $p$ splits in $\kappa$ but does not decompose further in $k$. Suppose that $A(k)$ is cyclic, generated  by the classes of the two ideals above $p$. Then $k$ satisfies \emph{GGC}.
\end{prop}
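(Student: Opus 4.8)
These three assertions are recalled as known facts, and I would assemble the proofs from one common mechanism: control $X(\tilde k)$ through its finite layers $A(K)$ ($K/k$ finite, $K\subset\tilde k$) and through its $\tilde\Gamma$-coinvariants, the arithmetic engine being Chevalley's ambiguous class number formula together with topological Nakayama, and then, when $A(k)\neq 0$, upgrade to pseudo-nullity with the module-theoretic criteria of \S\ref{sec5}. The decisive input, present in all three parts, is that $A(k)$ is \emph{finite}: this forbids any unramified $\Zfield_p$-extension of $k$, so every $\Zfield_p$-extension — hence $\tilde k/k$ — is ramified at a prime above $p$, and the decomposition hypothesis on $p$ in $k$ then lets one pin down how the $p$-adic primes ramify and split in $\tilde k/k$.

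For \textbf{(1)}, let $\mathfrak p$ be the unique prime of $k$ above $p$. I would first show that $\tilde k/k$ is \emph{totally ramified} at $\mathfrak p$: the fixed field of the inertia subgroup $I_{\mathfrak p}\subseteq\tilde\Gamma$ is abelian pro-$p$ and unramified over $k$ (nothing outside $p$ ramifies in $\tilde k/k$, and since $p$ is odd the archimedean places are unramified), hence it is contained in the Hilbert $p$-class field of $k$, which equals $k$ because $A(k)=0$; so $I_{\mathfrak p}=\tilde\Gamma$. Thus every finite $K\subset\tilde k$ has a single prime above $p$, and a composition series $k=K_0\subset\cdots\subset K_r=K$ by $\Zfield/p$-steps realizes $K$ through cyclic degree-$p$ extensions $K_{i+1}/K_i$ that are unramified outside the unique prime above $p$ and totally ramified there. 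Chevalley's formula then gives that the order of $A(K_{i+1})^{\Gal(K_{i+1}/K_i)}$ equals $|A(K_i)|\big/[\,E_{K_i}:E_{K_i}\cap N_{K_{i+1}/K_i}K_{i+1}^{\times}\,]$ (the ramification contributes a factor $p$ cancelling the degree), so $A(K_i)=0$ forces $A(K_{i+1})^{\Gal(K_{i+1}/K_i)}=0$, whence $A(K_{i+1})=0$ by Nakayama for the finite $p$-group $\Gal(K_{i+1}/K_i)$. Inductively $A(K)=0$ for all finite $K\subset\tilde k$, so $X(\tilde k)=\varprojlim_KA(K)=0$.

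For \textbf{(2)} and \textbf{(3)}, $A(k)\neq 0$ but is cyclic and generated by classes of primes above $p$. In case (2), applying the above to the fixed field $H'$ of $I_{\mathfrak p}$ in $\tilde k/k$ shows that $H'/k$ is unramified, that $\mathfrak p$ is non-split in it (its Frobenius, the Artin image of the generator $[\mathfrak p]$, generates $\Gal(H'/k)$, a quotient of the cyclic group $A(k)$), and that $\tilde k/H'$ is totally ramified at the unique prime above $p$; one then runs the Chevalley/Nakayama analysis over $H'$, where the cyclicity of $A(k)$ keeps the number of generators under control, to deduce finiteness of $X(\tilde k)_{\tilde\Gamma}$ together with enough structural information, and — this is the delicate point — upgrade this to $\tilde\Lambda$-pseudo-nullity via Lemma~\ref{l:5-10} and the refinement of Minardi's Proposition~4.B recalled before Theorem~\ref{t:5-5} (finite coinvariants alone would not suffice, so this last step genuinely uses the precise shape of the hypothesis). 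For \textbf{(3)} I would descend to the imaginary quadratic subfield $\kappa$: since $p$ splits in $\kappa$, the compositum of its cyclotomic and anticyclotomic $\Zfield_p$-extensions is a special $\Zfield_p^2$-extension of Greenberg type, and \emph{GGC} for $\kappa$ in this split-prime case is classical (Minardi, \cite{jmin1}); the hypotheses on $k$ — exactly two $p$-adic primes, $A(k)$ cyclic and generated by their classes — are tailored so that, $k/\kappa$ being abelian and unramified outside $p$, this green $\Zfield_p^2$-extension propagates to $k$ and Theorem~\ref{t:5-5} applies, after the same coinvariant computation as in (2).

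The main obstacle throughout is controlling the \emph{decomposition} (not merely the ramification) of the $p$-adic primes along the whole infinite extension $\tilde k/k$ — i.e.\ translating ``$p$ does not decompose in $k$'' into a statement usable on $\tilde k$ — and, in (2)--(3), both the Chevalley bookkeeping when several primes may lie above $p$ and the passage from ``finitely generated with finite coinvariants'' to genuine pseudo-nullity. This is exactly where the cyclicity of $A(k)$ and the hypothesis that it is generated by the classes of the $p$-adic primes are indispensable, and where the genus-theoretic estimates must be matched with the module-theoretic criteria of \S\ref{sec5} and with the known imaginary-quadratic results.
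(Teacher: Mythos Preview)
Your treatment of part \textbf{(1)} is correct and is essentially the argument the paper invokes: it cites Iwasawa and Washington's theorem~10.4, which is precisely the Chevalley/Nakayama propagation you spell out (a normal $p$-extension with at most one ramified prime inherits $A=0$). For parts \textbf{(2)} and \textbf{(3)} the paper simply defers to \cite{skle1}, theorem~4.6 and \cite{jmin1}, proposition~3.A respectively, and your sketches do not close the gaps you yourself flag.

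In \textbf{(2)}, the entire difficulty is the ``upgrade'' from control of $X(\tilde k)_{\tilde\Gamma}$ to $\tilde\Lambda$-pseudo-nullity. Lemma~\ref{l:5-10} and Theorem~\ref{t:5-5} let one climb from a green $\Zfield_p^2$-extension up to $\tilde k$, but they do not manufacture such a $\Zfield_p^2$-extension out of genus-theoretic data; and Lemma~\ref{l:5-10} applied directly requires comparing $\charac_R(M_t)$ with $\charac_R(M/tM)$, not merely bounding the latter. Saying that cyclicity of $A(k)$ ``keeps the number of generators under control'' is not enough: the hypothesis that $[\mathfrak p]$ \emph{generates} $A(k)$ must be used in a much sharper way (as Kleine does, via an explicit analysis of decomposition subgroups in carefully chosen $\Zfield_p^2$-subextensions). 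As written, your argument yields only finiteness of coinvariants, which you rightly note is strictly weaker than pseudo-nullity.

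In \textbf{(3)}, your reduction to $\kappa$ has two concrete problems. First, nothing in the statement forces $k/\kappa$ to be unramified outside $p$, so the propagation step is unjustified. Second, even granting a green $\Zfield_p^2$-extension $K^{(2)}$ of $\kappa$, the compositum $kK^{(2)}$ is a $\Zfield_p^2$-extension of $k$, but there is no general principle ensuring that $X(kK^{(2)})$ is pseudo-null over the Iwasawa algebra of $\Gal(kK^{(2)}/k)$: pseudo-nullity does not ascend through finite base change without further arithmetic input. In fact Minardi's proposition~3.A, which the paper cites, works directly with $k$ and its two $p$-primes, exploiting exactly the hypothesis that their classes generate the cyclic group $A(k)$; your descent to $\kappa$ discards precisely the information about $k$ that the statement is built around.
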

%%
%%proof
%%
\begin{proof} Statement {\bfseries (1)} is an early result of \cite{kiwa2}. Let us show the following refinement from \cite{lwas1}, chap. 10, theorem~10.4: If $L/k$ is a (normal) $p$-extension such that at most one prime of $k$ ramifies in $L$, then the nullity of $A(k)$ implies that of $A(L)$. Since any $\Zfield_p^d$-extension $K^{(d)}$ of $k$ is unramified outside $(p)$, the non-decomposition of $p$ in $k$ allows to apply the theorem to any extension $L/k$ contained in $K^{(d)}$, and conclude that $X(K^{(d)})=0$. As for {\bfseries (2)} (resp. {\bfseries (3)}), see  \cite{skle1}, theorem~4.6 (resp. \cite{jmin1}, proposition 3.A).
\end{proof}
Actually the reminders in proposition~\ref{p:15-1} can be  seen to belong to the general setting of theorem~\ref{t:15-10} below. Rather unexpectedly, the desired existence of a special $\Zfield_p$-extension $F_{\infty}/k$ such that $X'(F_{\infty})$ is finite can be viewed, in the the setting of theorem~\ref{t:10-5}, as a kind of converse to a ``density'' result due to T. Kataoka in the topological space $\Ecal(k)$ just  introduced above. More precisely for a number field $k$, let $\Ecal_{\text{ram}}(k)$ (resp. $\Ecal_{\text{ns}}(k)$) be the set of $\Zfield_p$-extension $F_{\infty}/k$ in which every $p$-prime of $k$ is ramified (rep. undecomposed). The ``non-decomposition'' headline means that our main subspace of interest will be $\Ecal_{\text{ns}}(k)$, which is open and closed in $\Ecal(k)$ (\cite{tkat1}, lemma~5.1). To ensure that $\Ecal_{\text{ns}}(k)\neq\emptyset$, we can  suppose that $k^{\text{cyc}}\in \Ecal_{ns}(k)$, which happens e.g. when $p\ndivi [k/\Qfield]$ or $p$ unramified in $k$ (\cite{tkat1}, remark~2.5). Denote by $s(F_{\infty}/k)$ the $\Zfield_p$-rank of $X(F_{\infty})_{G(F_{\infty}/k)}$ and by $s(k)$ the minimum of $s(F_{\infty}/k)$ in $\Ecal_{\text{ram}}(k)$. From theorems~1.3 and 5.3 of \cite{tkat1} we extract the following
%%
%%theorem
%%
\begin{thm}\label{t:15-5} Suppose that the number field $k$ satisfies \emph{GGC} and the conjecture of Kuz'min-Gross. Then the subset of $\Ecal_{ns}(k)$ consisting of the $\Zfield_p$-extensions $K/k$ such that $\mu(K/k)=0$ and $\lambda(K/k)=s(k)$ contains an open dense subset of $\Ecal_{ns}(k)$. Moreover, $s(k)=0$ if $p$ does not decompose in $k$ (exemple~4.4.2). If $p$ splits  completely in $k$, suppose in addition that $k$ is CM and satifies Leopoldt's conjecture; then $s(k)=[k:\Qfield]/2$ (exemple~4.4.1).
\end{thm}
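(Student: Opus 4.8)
The plan is to deduce the statement from T.~Kataoka's work, specifically Theorems~1.3 and~5.3 of \cite{tkat1}, by making explicit the translation of hypotheses and by computing $s(k)$ in the two distinguished cases.

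First I would recall Kataoka's framework. He studies, as $F_{\infty}$ runs over $\Ecal(k)$ with the Greenberg topology, the Iwasawa module $X(F_{\infty})$ and the $\Zfield_p$-rank $s(F_{\infty}/k)$ of its $G(F_{\infty}/k)$-coinvariants; recall that $s(k)$ is the minimum of $s(F_{\infty}/k)$ over $\Ecal_{\mathrm{ram}}(k)$. Theorem~1.3 of \cite{tkat1} asserts that, granting \emph{GGC} for $k$ (pseudo-nullity of $X(\widetilde{k})$ over $\widetilde{\Lambda}$) and the Kuz'min--Gross conjecture, the locus of $F_{\infty}\in\Ecal_{\mathrm{ram}}(k)$ at which $\mu(F_{\infty}/k)=0$ and $\lambda(F_{\infty}/k)=s(k)$ contains an open dense subset: heuristically, $F_{\infty}$ corresponds to a quotient $\widetilde{\Lambda}\twoheadrightarrow\Lambda$, pseudo-nullity of $X(\widetilde{k})$ forces its specialisation to be $\mu$-free with $\lambda$-invariant equal to the ``boundary'' contribution $s(k)$ outside a proper closed subset, and Kuz'min--Gross controls the finite-submodule defect relating that specialisation back to $X(F_{\infty})$. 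Theorem~5.3 of \cite{tkat1} refines this from $\Ecal_{\mathrm{ram}}(k)$ to the open and closed subspace $\Ecal_{\mathrm{ns}}(k)$ (open and closed by \cite{tkat1}, lemma~5.1, and non-empty under our hypotheses since then $k^{\text{cyc}}\in\Ecal_{\mathrm{ns}}(k)$). Combining the two gives the first assertion.

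Next I would unwind $s(k)$ in the two distinguished cases. If $p$ does not decompose in $k$, there is a single prime $\pfrak\mid p$, which for every $F_{\infty}\in\Ecal_{\mathrm{ram}}(k)$ is the unique ramified prime and is hence totally ramified from some finite layer on; the classical descent isomorphism then shows that $X(F_{\infty})_{\Gamma}$ is a quotient of $A(F_{n})$ for $n\gg 0$, so it is finite and $s(F_{\infty}/k)=0$; thus $s(k)=0$. (This is \cite{tkat1}, example~4.4.2.) If $p$ splits completely in $k$, so that there are $[k:\Qfield]$ primes above $p$, and $k$ is CM and satisfies Leopoldt, I would instead run a $\Lambda$-rank count through the class-field-theory exact sequences of \S\ref{sec5} --- namely $0\rightarrow D(F_{\infty})\rightarrow Y(F_{\infty})\rightarrow X'(F_{\infty})\rightarrow 0$ together with the valuation sequence linking $X$ and $X'$ --- using the $\Lambda$-ranks of the global and local unit modules recorded there (which depend, respectively, on Leopoldt and on the local degrees at $p$) to obtain $s(F_{\infty}/k)\geq[k:\Qfield]/2$ for every $F_{\infty}\in\Ecal_{\mathrm{ram}}(k)$, with equality on a dense open set; hence $s(k)=[k:\Qfield]/2$. (This is \cite{tkat1}, example~4.4.1.) The contribution of $D(F_{\infty})$ here is controlled exactly as in the proof of theorem~\ref{t:5-5}, the relevant decomposition groups of the $p$-primes being of $\Zfield_p$-rank one in the pertinent range.

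The main obstacle is not any of these computations, which are either classical or the cited examples; it is the bookkeeping of hypotheses. One must verify that the standing assumptions under which Kataoka proves Theorems~1.3 and~5.3 of \cite{tkat1} are implied by ``\emph{GGC} $+$ Kuz'min--Gross'' as formulated here, and that his ``generic minimal $\lambda$'' coincides verbatim with our $s(k)$ rather than with an a priori distinct invariant attached to a different model of $\Ecal(k)$. Once this dictionary is fixed, the proof reduces to the two rank computations just sketched together with a direct appeal to \cite{tkat1}.
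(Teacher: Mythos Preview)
Your proposal is correct and matches the paper's approach: the paper does not give an independent proof of this theorem but simply extracts it from Kataoka's Theorems~1.3 and~5.3 in \cite{tkat1}, with the two values of $s(k)$ taken directly from his examples~4.4.1 and~4.4.2. Your additional sketches of the rank computations go slightly beyond what the paper records, but the strategy is identical.
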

%%
%%theorem
%%
\begin{thm}\label{t:15-10}  Let $k$ be a number field such that $k^{\text{cyc}}\in\Ecal_{ns}(k) $. Then, modulo the Kuz'min-Gross conjecture:
\begin{description}
\item[(a)] If $p$ is not decomposed in $k$, \emph{GGC} holds if and only if $k$ admits an independent $\Zfield_p$-extension $F_{\infty}=\cup F_m$ such that $\lambda'(F_{\infty})=\mu'(F_{\infty})=0$ and that $\displaystyle\varprojlim_{m}\Psi\big(F^{\text{cyc}}_m\big)$ is finite.
\item[(b)] If $p$ splits completely in $k$, suppose in addition that $k$ is CM and satisfies Leopoldt's conjecture. Then the same conclusion as in (a) remains valid.
\end{description}
\end{thm}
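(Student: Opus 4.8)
The plan is to obtain the equivalence by chaining Theorems~\ref{t:5-5} and~\ref{t:10-5} for the ``if'' part with Kataoka's density Theorem~\ref{t:15-5} for the ``only if'' part, the two cases (a), (b) entering only through the value of $s(k)$ and through the decomposition behaviour of $p$. I would first reduce \emph{GGC} to \emph{GGC'}. In case (a) the field $k$ has a unique place $v$ above $p$, and then the injectivity of the localisation map $j$ established in the proof of Lemma~\ref{l:5-1}, which rests on the Brauer-Hasse-Noether theorem, forces the decomposition group $\widetilde{\Gamma}_v$ to have \emph{finite index} in $\widetilde{\Gamma}$; hence the condition (D) of \S\ref{sec5} --- and, a fortiori, (Dec) for every $\Zfield_p^d$-subextension with $d\geq 2$ --- holds unconditionally, and the decomposition submodule $D(\tilde{k})$ of $X(\tilde{k})$, being a quotient of the induced $\widetilde{\Lambda}$-module $\Zfield_p[\widetilde{\Gamma}/\widetilde{\Gamma}_v]$ whose annihilator has height $1+r_2+\delta\geq 2$, is pseudo-null; so \emph{GGC}$\,\Leftrightarrow\,$\emph{GGC'}. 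In case (b), the inputs ``$k$ CM, Leopoldt, $p$ totally split'' are precisely what is needed to reach, at each $v\mid p$, the full local group $\Zfield_p^2$ of abelian $\Zfield_p$-extensions of $k_v=\Qfield_p$, whence (D) again and \emph{GGC}$\,\Leftrightarrow\,$\emph{GGC'}. This also explains why the normality over $\Qfield$ assumed in Theorems~\ref{t:5-5} and~\ref{t:10-5} --- used there only to invoke Lemma~\ref{l:5-5} --- can be dispensed with here. It thus remains to characterise ``$\tilde{k}/k$ is green''.

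\emph{Sufficiency.} Suppose $k$ has an independent $\Zfield_p$-extension $F_\infty=\cup F_m$ with $\lambda'(F_\infty)=\mu'(F_\infty)=0$, i.e. $\varprojlim_m A'(F_m)=X'(F_\infty)$ finite (property (III) of \S\ref{sec15}), and with $\varprojlim_m\Psi(F^{\text{cyc}}_m)$ finite (property (II)). Applying the second exact sequence of Lemma~\ref{l:10-5} at the bottom layer of each tower $F^{\text{cyc}}_m/F_m$ yields the implication of \S\ref{sec15} by which (II) and (III) together give (I), so that the third obstruction kernel $\varprojlim_m(X'(F^{\text{cyc}}_m)^0)^{\Gamma}$ is finite as well. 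Next, $K^{(2)}=k^{\text{cyc}}\cdot F_\infty$ must be seen to satisfy (Dec): in case (a) this is automatic by the first paragraph, while in case (b) one invokes the (D)-type input above, if necessary after replacing $F_\infty$ by another member of the open-and-closed set of $\Zfield_p$-extensions with $\lambda'=\mu'=0$ that moreover lies in the dense open locus of $\Ecal_{ns}(k)$ on which $K^{(2)}$ is suitably decomposed. Then the proof of Theorem~\ref{t:10-5} applies, reading ``special'' as ``$K^{(2)}\supseteq k^{\text{cyc}}$ and (Dec) holds'': its hypotheses (I), (II) are finite and (III) holds, so $K^{(2)}/k$ is green, and the induction of Theorem~\ref{t:5-5} propagates greenness along a suitable tower $K^{(2)}\subset\cdots\subset\tilde{k}$, giving \emph{GGC'} and hence \emph{GGC}.

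\emph{Necessity.} Conversely, assume \emph{GGC} (hence \emph{GGC'}) and the Kuz'min-Gross conjecture. By Theorem~\ref{t:15-5} the $\Zfield_p$-extensions $F_\infty/k$ with $\mu(F_\infty/k)=0$ and $\lambda(F_\infty/k)=s(k)$ contain a dense open subset of $\Ecal_{ns}(k)$, which is non-empty since $k^{\text{cyc}}\in\Ecal_{ns}(k)$; and since the independent $F_\infty$ with $\lambda'(F_\infty)=\mu'(F_\infty)=0$ form an open-and-closed subset of $\Ecal(k)$, it is enough to exhibit one $F_\infty$ in the former dense open set, independent of $k^{\text{cyc}}$, that also satisfies (III) and (II). In case (a), $s(k)=0$ (Theorem~\ref{t:15-5}), so $X(F_\infty)$, and with it its quotient $X'(F_\infty)$, is finite --- this is (III) --- and the unique place above $p$ makes the Sinnott module $M_n$ in the proof of Lemma~\ref{l:10-5} vanish at every layer $K_{m,n}$, after intersecting if necessary with the dense open locus of $\Ecal_{ns}(k)$ on which $p$ stays undecomposed in each $F^{\text{cyc}}_m$ (non-empty by the non-decomposition criteria of \S\ref{sec15}), so $\Psi(F^{\text{cyc}}_m)=0$ and (II) is trivial. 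In case (b), $s(k)=[k:\Qfield]/2=r_2$, and the point is to prove, by a class-field-theoretic rank count using Leopoldt for $k$ and the CM structure, that in $0\to D(F_\infty)\to X(F_\infty)\to X'(F_\infty)\to 0$ the $p$-decomposition submodule $D(F_\infty)$ has $\Zfield_p$-rank exactly $r_2$; as $X(F_\infty)$ has $\Zfield_p$-rank $\lambda(F_\infty)=r_2$ and $\mu'(F_\infty)\leq\mu(F_\infty)=0$, this forces $X'(F_\infty)$ finite, i.e. (III). Property (II) then follows from the interdependence of \S\ref{sec15} by which (I) and (III) imply finiteness of $\varprojlim_m\Psi(F_m)$, from the first exact sequence of Lemma~\ref{l:10-5} passing from $\Psi(F_m)$ to $\Psi(F^{\text{cyc}}_m)$, and from the finiteness of (I), which \emph{GGC'} supplies through the same interdependences.

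\emph{The main obstacle.} The crux is the rank computation of $D(F_\infty)$ in case (b): proving that, generically on $\Ecal_{ns}(k)$, the $p$-decomposition module absorbs exactly the $\lambda$-drop $s(k)=r_2$ provided by Kataoka's theorem. This amounts to a $p$-adic independence statement --- that the global $(p)$-units and the local norm-completion modules at the totally split primes have the ranks one expects --- of precisely the kind the paper flags as delicate outside the normal case, and it is here that Leopoldt's conjecture for the CM field $k$ is indispensable. A subsidiary difficulty is to secure, on a non-empty (dense open) portion of $\Ecal_{ns}(k)$, both (Dec) for $K^{(2)}$ \emph{and} the finiteness of $\varprojlim_m\Psi(F^{\text{cyc}}_m)$: trivial in case (a) because of the single prime, but in case (b) resting on the fine structure of the Sinnott sequences along the two independent directions of $K^{(2)}$.
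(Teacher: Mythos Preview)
Your overall architecture matches the paper's: chain Theorems~\ref{t:5-5} and~\ref{t:10-5} for the ``if'' direction, and invoke Kataoka's density Theorem~\ref{t:15-5} for the ``only if''. The paper, however, is far terser than your proposal. It declares that ``it suffices to show the \emph{if} part'' and then, for case (a), simply points to the implication (II)+(III)$\Rightarrow$(I) already recorded at the start of \S\ref{sec15}; everything else (application of Theorems~\ref{t:10-5} and~\ref{t:5-5}, the passage \emph{GGC'}$\Rightarrow$\emph{GGC}) is left implicit.

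The one substantive divergence is in case (b). Where you propose to compute directly that $\mathrm{rank}_{\Zfield_p}D(F_\infty)=r_2$ and flag this as the ``main obstacle'', the paper sidesteps the computation entirely by quoting an off-the-shelf identity from \cite{ne_sc_wi1} (Propositions~11.4.5 and~11.4.7): for a CM field,
\[
\lambda(k^{\text{cyc}})-\lambda'(k^{\text{cyc}})=\#S_p(k^{\text{cyc}})-\#S_p(k^{\text{cyc},+}),
\]
and under the total-splitting and Leopoldt hypotheses this right-hand side equals $[k:\Qfield]/2=s(k)$. This furnishes the $\lambda$--$\lambda'$ bridge as a citation rather than as a new rank calculation, so what you identify as the crux dissolves into a reference. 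Your approach via $D(F_\infty)$ would in principle recover the same count, but it is more laborious and, as you note yourself, runs into the $p$-adic independence issues the paper warns about in the non-normal setting.

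Finally, your careful treatment of the ``only if'' direction --- in particular, exhibiting an $F_\infty$ for which (II) holds, and securing (Dec) for the associated $K^{(2)}$ --- goes well beyond what the paper actually writes down. The paper does not explicitly produce (II) from \emph{GGC}; it treats the converse as essentially the content of Theorem~\ref{t:15-5} and moves on. Your observation that in case (a) the Sinnott module $M_n$ vanishes when there is a single $p$-place (so $\Psi=0$) is correct and is presumably what the paper has in mind, but the paper does not say so.
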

%%
%%proof
%%
\begin{proof}It suffices to show the ``if'' part in cases {\bfseries (a)} and {\bfseries (b)} of theorem~\ref{t:15-10}. As for {\bfseries (a)}, this just means that (II)+(III) imply (I), as we have seen at the beginning of \S\ref{sec15}. As for {\bfseries (b)}, it is classically known (\cite{ne_sc_wi1}, proposition~114.5 and 11.4.7) that
\begin{equation*}
\lambda(k^{\text{cyc}})-\lambda'(k^{\text{cyc}})=\#S_p(k^{\text{cyc}})-\#S_p(k^{\text{cyc},+})
\end{equation*}
for a CM-fields, where $S_p(.)$ denotes the set of $p$-places. But our ramification hypothesis implies that, modulo Leopoldt's conjecture 
\begin{equation*}
\#S_p(k)-\#S_p(k^+)=\frac{[k:\Qfield]}{2}=\#S_p(k^{\text{cyc}})-\#S_p(k^{\text{cyc}, +})\, .\hspace{15mm}\qed
\end{equation*}
\renewcommand{\qed}{}\end{proof}
Thus, in the setting of theorem~\ref{t:10-5}, theorems~\ref{t:15-5} and \ref{t:15-10} assert, roughly speaking, that \emph{GGC} is equivalent to (I)+(II)+(III).
%%
%%exemples
%%
\begin{exs}\label{ex:15-5} It is easy to provide examples with trivial (non only finite) obstruction  kernels $\Psi(.)$ because the nullity of $\Psi(k^{\text{cyc}})$ propagates to all the $\Psi(F^{\text{cyc}}_m)$'s (see proposition~\ref{p:20-10} below). The triviallity of $\Psi(k^{\text{cyc}})$ is an asymptotic property, but which can be detected at finite levels. For example, the nullity occurs if the prime $p$ does not decompose in $k^{\text{cyc}}/\Qfield$, see proposition~\ref{p:15-1} {\bfseries (1)} above. However this is not the only possibility:
\begin{description}
\item[(a)] Assuming the Kuz'min-Gross conjecture at all levels $n\gg 0$, the nullity of $A'(k^{\text{cyc}})$ (which is Greenberg's classical conjecture when the base field $k$ is totally real ) is equivalent to the nullity of $\Psi(k_n)$ together with the capitulation of $A'(k_n)$ in $k^{\text{cyc}}$ for $n\gg 0$ (\cite{fl_mo_ng1}, corollary~1.8).
\item[(b)] In general, if the vanishing of $\Psi(k_N)$ occurs at a certain level $N$, then it propagates to all levels, and the map 
\begin{equation*}
A'(k_n)\longrightarrow A'(k^{\text{cyc}})^{\Gamma^{p^n}}
\end{equation*}
is surjective  for $n\geq N$ (\cite{fl_mo_ng1}, corollary~1.6).
\end{description}
As it is well known, the analogous problem for $\big(X'\big(F^{\text{cyc}}_m\big)^0\big)^{\Gamma}$ can be much harder. But recall that  (II)+(III) imply (I). For a more systematic study, see \S\ref{sec20} below.
\end{exs}
%%
%%section
%%
\section{\large Nullity of the obstructions kernels \label{sec20}}
The construction {\bfseries (ii)} evoked at the beginning of \S\ref{sec15}, the auxiliary $\Zfield_p$-exten\-sion $F_{\infty}/k$ being arbitrary, the condition imposed on (resp. the results obtained from) the control of the obstruction kernels will naturally be stronger (resp. weaker). Let us first look at the simultaneous nullity of $\Psi\big(K{\text{cyc}}\big)$ and $X'\big(K^{\text{cyc}}\big)^0$ for a given number field $K$. Examples of fields $K$ with trivial $\Psi\big(K^{\text{cyc}}\big)$ or $X'\big(K^{\text{cyc}}\big)^0$ (separately) can be found in the literature, but our problem from here on will be twofold:
\begin{itemize}
\item to construct families of fields $K$ for which both$\Psi\big(K^{\text{cyc}}\big)$ and $X'\big(K^{\text{cyc}}\big)^0$ are trivial;
\item to study the propagation of the vanishing  conditions from $K$ to $F^{\text{cyc}}$. The point is that, in the non semi-simple case, the modules $X'(.)$ do not behave obediently under descent. To circumvent this difficulty, we must appeal to ``reflection relations'' (``Spielgelung''), which will require beforehand.
\end{itemize}
%%
%%paragraph
%%
\paragraph{A survey of the Bertrandias-Payan module.}(see \cite{fl_mo_ng1}, \cite{ng_ni1})

For a number field $K$ containing $\mu_p$ (with no other assumption), ``Spielgelung'' consists in a combination of isomorphism  (class field theory) and duality (Kummer theory), so it is best expressed above $K(\mu_{p^{\infty}}):=K_{\infty}$, in the framework of Iwasawa theory. For the following reminders, we refer mainly to \cite{fl_mo_ng1}, \S\S2-3; a convenient detailed summary can be found in \cite{ng_ni1} \S6. Let us fix some notations, most of them coming from \cite{kiwa2}: let $M_{\infty}$ be the maximal abelian pro-$p$-extension of $K_{\infty}$ unramified outside $p$, so that  $Y(K_{\infty})=\Gal(M_{\infty}/K_{\infty})$, and let $T_{\infty}$ be the fixed field of $t_{\Lambda}Y(K_{\infty})$, the $\Lambda$-torsion of $Y(K_{\infty})$, with $\Lambda=\Zfield_p[[T]]$. The field $M_{\infty}$ contains three remarquable subfields, $L'_{\infty}=$the maximal abelian unramified pro-$p$-extension of $K_{\infty}$ such that all the $p$-places of $K_{\infty}$ are totally split in $L'_{\infty}$; $N'_{\infty}=$the field obtained by adding to $K_{\infty}$ all $p$-primary roots of all $p$-units of $K_{\infty}$; and $K_{\infty}^{\text{BP}}=$the so-called field of Bertrandias-Payan (\cite{fl_mo_ng1}, \S2). Although the Bertrandias-Payan fields can be introduced at finite levels in the setting of the Galois embedding problem (as in \cite{tngu1}), we shall give here a definition together with some properties directly at infinite level (as in \cite{tngu2}). It is again recommanded to draw a Galois diagram;
%%
%%subparagraph
%%
\subparagraph{(P1)} The natural surjection $Y(K_{\infty})\rightarrow X'(K_{\infty})$ induces a map 
\begin{equation*}
t_{\Lambda} Y(K_{\infty})\rightarrow X'(K_{\infty})
\end{equation*} 
whose kernel and cokernel can be described using class field theory and Poitou-Tate duality. With obvious notations for local objects $(.)_{v}$, it is known that $t_{\Lambda}Y(K_{\infty})$ contains $W(K_{\infty}):= \bigoplus_{v\divi p}\big(\text{Ind}_{\Lambda_{v}}^{\Lambda}\Zfield_p(1)\big)/\Zfield_p(1)$. Denoting  by $K_{\infty}^{\text{BP}}$ the subfield of $M_{\infty}$ fixed by $W(K_{\infty})$ and calling $\text{BP}(K_{\infty}):=\Gal\big(K_{\infty}^{\text{BP}}/K_{\infty}\big)$ the \emph{Bertandias-Payan module} over $K_{\infty}$, one has by definition a canonical exact sequence of $\Lambda$-modules
\begin{equation*}
0 \rightarrow W(K_{\infty}) \rightarrow t_{\Lambda}Y(K_{\infty}) \rightarrow t_{\Lambda}\text{BP}(K_{\infty})=\Gal(K_{\infty}^{\text{BP}}/T_{\infty})\rightarrow 0
\end{equation*}
%%
%%subparagraph
%%
\subparagraph{(P2)} The relationship with the obstruction kernel $\Psi(K_{\infty})$ appears when going down from $M_{\infty}$ to $T_{\infty}$ through $K_{\infty}^{\text{BP}}$ and $N'_{\infty}$. Get back to the Sinnott exact sequence in lemma~\ref{l:10-5}, where $\widehat{U'_n}$ is the subgroup of $\overline{U'_n}$ consisting of elements which are universal norms locally everywhere:
\begin{equation}\tag{\ref{e:10-5}}
0\rightarrow \overline{U'_n}/\widehat{U'_n} \rightarrow \widetilde{\bigoplus_{v\divi p}} \overline{K^*_{n,v}}/\widehat{K^*_{n,v}}\rightarrow \Psi_n\rightarrow 0
\end{equation}
Tensoring with $\Qfield_p/\Zfield_p$ and taking inductive limits on $n$ we get , with obvious notations:
\begin{equation*}
0\rightarrow \Psi(K_{\infty})\rightarrow \varinjlim \big(\overline{U'}_n/\widehat{U'}_n\big)\otimes\Qfield_p/\Zfield_p\rightarrow \varinjlim \big(\widetilde{\bigoplus_{v\divi p}} \overline{K^*_{n,v}}/\widehat{K^*_{n,v}}\big)\otimes\Qfield_p/\Zfield_p\big)\rightarrow 0
\end{equation*}
(for details see \cite{fl_mo_ng1}, p. 866). Under the Kuz'min-Gross conjecture, we know from \cite{mkol2}, \cite{ikuz1} that $\Gal(T_{\infty}/K_{\infty})=f_{\Lambda}Y(K_{\infty})=Y(K_{\infty})/t_{\Lambda}Y(K_{\infty})$
is the Kummer dual of $\Gal\big({L'}_{\infty}\cap {N'}_{\infty}/{L'}_{\infty}\cap T_{\infty}\big)\cong \Hom(\Psi\big(K_{\infty}),\mu_p^{\infty}\big)$ (\cite{fl_mo_ng1}, theorem~2.5 and 3.2). Note that the Kummer dual of the previous sequence is just the exact sequence of Galois groups
\begin{equation*}
0\rightarrow  W(K_{\infty})\rightarrow \Gal({N'}_{\infty}/T_{\infty})\rightarrow \Gal\big(K_{\infty}^{\text{BP}}\cap N'_{\infty}/T_{\infty}\big)\rightarrow 0\, .
\end{equation*}
The key point is that together with \cite{ng_ni1}, \S6, this gives an ``almost'' direct sum decomposition of the module $t_{\Lambda}Y(K_{\infty})$:
\begin{multline*}
0\rightarrow  W(K_{\infty})\bigoplus t_{\Lambda}\text{BP}(K_{\infty})\rightarrow t_{\Lambda}Y(K_{\infty})\rightarrow  \\
\rightarrow G\big(K_{\infty}^{\text{BP}}\cap N'_{\infty}/T_{\infty}\big)\cong \Hom\big(\Psi(K_{\infty}),\mu_{p^{\infty}}\big)\rightarrow 0
\end{multline*}
The last term is finite. Its nullity is equivalent to $K^{\text{BP}}_{\infty}\cap N'_{\infty}=T_{\infty}$, in which case $\Gal(N'_{\infty}/T_{\infty})\cong W(K_{\infty})$ and $t_{\Lambda}\text{BP}(K_{\infty})\cong \Gal(M_{\infty}/N'_{\infty})$.
%%
%%subparagraph
%%
\subparagraph{(P3)} Independently from any consideration on the Bertrandias-Payan mo\-dule, recall that, after the structure theorem for $\Lambda$-torsion free modules, 
\begin{equation*}
f_{\Lambda} Y(K_{\infty}) =Y(K_{\infty})/t_{\Lambda}Y(K_{\infty})
\end{equation*}
 injects into a free $\Lambda$-module, say with finite cokernel $V_{\infty}$ (note that this cokernel is actually canonical; see \cite{ujan1}). In the kummerian situation, $X'(K_{\infty})^0$ is the Kummer dual of $V_{\infty}$ (\cite{kiwa2}, \cite{ikuz1}), so that $X'(K_{\infty})^0=(0)$ if and only if $f_{\Lambda}Y(K_{\infty})$ is $\Lambda$-free.\medskip

Summarizing, we get the following criteria for the nullity of the obstruction kernels in the kummerian situation.
%%
%%proposition
%%
\begin{prop}\label{p:20-5} If $K$ contains $\mu_p$, then:
\begin{description}
\item[(i)] Under the Kuz'min-Gross conjecture, $\Psi(K_m)=(0)$ if and only if 
\begin{equation*}
t_{\Lambda}Y(K_{\infty})\cong W_{\infty}\bigoplus t_{\Lambda}\text{BP}(K_{\infty})\, ;
\end{equation*}
\item[(ii)] $X'(K_{\infty})^0=(0)$ if and only if $Y(K_{\infty})\cong t_{\Lambda}Y(K_{\infty})\bigoplus \Lambda^{r_2}$.
\end{description}
\end{prop}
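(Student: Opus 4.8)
The plan is to read both equivalences off the survey \textbf{(P1)}--\textbf{(P3)}: most of the content is already recorded there as ``almost'' direct-sum decompositions, and the task is to turn these into genuine if-and-only-if statements, the rest being formal.

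For \textbf{(i)} I would start from the fundamental exact sequence of \textbf{(P2)}, valid modulo the Kuz'min-Gross conjecture,
\[
0\rightarrow W(K_{\infty})\bigoplus t_{\Lambda}\text{BP}(K_{\infty})\longrightarrow t_{\Lambda}Y(K_{\infty})\longrightarrow \Hom\big(\Psi(K_{\infty}),\mu_{p^{\infty}}\big)\rightarrow 0 ,
\]
together with the elementary remark that $\mu_{p^{\infty}}$ is an injective cogenerator of the category of $\Zfield_p$-modules, so that $\Hom\big(\Psi(K_{\infty}),\mu_{p^{\infty}}\big)=(0)$ if and only if $\Psi(K_{\infty})=(0)$. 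Reading the isomorphism in the statement through the canonical injection of this sequence, \textbf{(i)} is then exactly the assertion that the submodule $W(K_{\infty})\bigoplus t_{\Lambda}\text{BP}(K_{\infty})$ exhausts $t_{\Lambda}Y(K_{\infty})$ precisely when its finite cokernel $\Hom\big(\Psi(K_{\infty}),\mu_{p^{\infty}}\big)$, hence $\Psi(K_{\infty})$, is trivial. To obtain the sharper statement that no \emph{abstract} $\Lambda$-isomorphism $t_{\Lambda}Y(K_{\infty})\cong W(K_{\infty})\bigoplus t_{\Lambda}\text{BP}(K_{\infty})$ can hold when $\Psi(K_{\infty})\neq(0)$, one would rule out accidental injections with finite nonzero cokernel by invoking that $t_{\Lambda}\text{BP}(K_{\infty})$ and $W(K_{\infty})$ (explicit as a sum of induced twists of $\Zfield_p(1)$) carry no nonzero finite $\Lambda$-submodule, and comparing with the canonical sequence $0\to W(K_{\infty})\to t_{\Lambda}Y(K_{\infty})\to t_{\Lambda}\text{BP}(K_{\infty})\to 0$ of \textbf{(P1)}.

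For \textbf{(ii)}, which requires no conjectural input, I would invoke \textbf{(P3)}: $X'(K_{\infty})^0$ is the Kummer dual of the finite cokernel $V_{\infty}$ of the embedding of the $\Lambda$-torsion-free module $f_{\Lambda}Y(K_{\infty})=Y(K_{\infty})/t_{\Lambda}Y(K_{\infty})$ into a free $\Lambda$-module, so that $X'(K_{\infty})^0=(0)$ if and only if $f_{\Lambda}Y(K_{\infty})$ is $\Lambda$-free. Since the $\Lambda$-rank of $Y(K_{\infty})$, hence of $f_{\Lambda}Y(K_{\infty})$, equals $r_2$ (the standard rank formula for this Iwasawa module, also implicit in the structure of $\varprojlim_h\overline{U'_h}$ used in the proof of lemma~\ref{l:10-5}), freeness of $f_{\Lambda}Y(K_{\infty})$ means $f_{\Lambda}Y(K_{\infty})\cong\Lambda^{r_2}$, and then the exact sequence $0\to t_{\Lambda}Y(K_{\infty})\to Y(K_{\infty})\to f_{\Lambda}Y(K_{\infty})\to 0$ splits, $\Lambda^{r_2}$ being projective, giving $Y(K_{\infty})\cong t_{\Lambda}Y(K_{\infty})\bigoplus\Lambda^{r_2}$. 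Conversely, from such an isomorphism one recovers $f_{\Lambda}Y(K_{\infty})\cong\Lambda^{r_2}$ by taking $\Lambda$-torsion submodules on both sides ($\Lambda^{r_2}$ is torsion-free while $t_{\Lambda}Y(K_{\infty})$ is torsion), whence $V_{\infty}=(0)$ and $X'(K_{\infty})^0=(0)$.

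The step I expect to be the main obstacle is the ``if'' direction of \textbf{(i)} --- making precise in which sense $t_{\Lambda}Y(K_{\infty})$ ``decomposes'' and showing this forces $\Psi(K_{\infty})=(0)$ without circularity --- since \textbf{(ii)} is a formal consequence of \textbf{(P3)} together with the splitting of the torsion sequence.
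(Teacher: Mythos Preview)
Your proposal is correct and follows exactly the paper's intended argument: the proposition is stated as a \emph{summary} of \textbf{(P1)}--\textbf{(P3)} and carries no separate proof, so reading \textbf{(i)} off the ``almost'' direct-sum sequence of \textbf{(P2)} and \textbf{(ii)} off the freeness criterion of \textbf{(P3)} (plus the splitting of $0\to t_{\Lambda}Y\to Y\to f_{\Lambda}Y\to 0$ by projectivity of $\Lambda^{r_2}$) is precisely what is meant.

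One remark on your self-identified ``main obstacle'': the isomorphism in \textbf{(i)} is meant to be the canonical one coming from \textbf{(P2)}, not an abstract $\Lambda$-isomorphism (this is how the proposition is used later, e.g.\ in the proof of theorem~\ref{t:25-5}, where one applies $e_{-i}$ to the \emph{exact sequence} of \textbf{(P2)}). So your extra worry is unnecessary. It is just as well, because your sketched fix---ruling out accidental injections with finite nonzero cokernel via absence of finite submodules---does not work as stated: an injective $\Lambda$-endomorphism of a torsion module without finite submodule can perfectly well have nonzero finite cokernel (multiplication by $T$ on $\Lambda/p$, say), so ``no finite submodule'' alone does not force such an injection to be surjective. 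If you really wanted the abstract statement you would need a finer invariant (e.g.\ a Fitting-ideal or length-along-height-one-primes argument), but the paper does not require this.
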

We can now show a key ``propagation'' result for the vanishing of the obstrction kernels.
%%
%%proposition
%%
\begin{prop}\label{p:20-10}
For a number field $K$, let $K^{(2)}$ be the compositum of $K^{\text{cyc}}$ with an independent $\Zfield_p$-extension $F_{\infty}=\cup_{m}F_m$. Then the nullity of $\Psi(K^{\text{cyc}})$ propagates to all levels $F_m^{\text{cyc}}$.
\end{prop}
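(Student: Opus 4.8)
The plan is to leverage the structural criterion of Proposition~\ref{p:20-5}(i), together with the functorial behaviour of the modules $Y(\cdot)$ and $W(\cdot)$ under the $\Zfield_p$-extensions of $K^{\text{cyc}}$ cut out inside $K^{(2)}$. First I would reduce to the case where $K$ contains $\mu_p$: one replaces $K$ by $K(\mu_p)$ and notes that $\Psi(\cdot)$ is controlled, up to finite modules of order prime to $p$ (recall $p$ is odd), by the $+$/$-$-eigenspace decomposition under $\Gal(K(\mu_p)/K)$, so triviality of $\Psi(K^{\text{cyc}})$ is equivalent to triviality of the relevant eigencomponents of $\Psi(K(\mu_p)^{\text{cyc}})$, and likewise for each $F_m$; the same descent then propagates the conclusion back down. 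Hence from now on assume $\mu_p\subset K$, so we may work above $K_\infty=K(\mu_{p^\infty})$ as in the survey of the Bertrandias-Payan module.

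Next I would set up the Iwasawa-theoretic comparison. For each $m$, the field $(F_m^{\text{cyc}})_\infty=F_m^{\text{cyc}}(\mu_{p^\infty})$ sits over $K_\infty$ as the $\Zfield_p^{\,?}$-extension generated by $F_\infty$ (restricted to the $m$-th layer), and the modules $Y\big((F_m^{\text{cyc}})_\infty\big)$ fit into a descent picture relative to $Y(K_\infty^{(2)})$, where $K_\infty^{(2)}$ is the cyclotomic-by-$\mu_{p^\infty}$ cover of $K^{(2)}$. The key input is that $Y(K_\infty^{(2)})$ is a torsion $\widetilde{\Lambda}$-module outside the contribution of the $\Lambda$-free rank-$r_2$ piece, and that $W(\cdot)$, being an explicit induced-module construction from the local pieces $\mathrm{Ind}_{\Lambda_v}^{\Lambda}\Zfield_p(1)/\Zfield_p(1)$, is compatible with the corestriction/restriction maps between levels — this is where (P1) and (P2) do the work. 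Concretely, I would take $\Gamma$-coinvariants (where $\Gamma=\Gal(K_\infty^{(2)}/(F_\infty)_\infty K_\infty)$-type Galois groups, one slot at a time going down the $F$-tower) and invoke Lemma~\ref{l:5-10}: since $\Psi(K^{\text{cyc}})=(0)$, the decomposition $t_\Lambda Y(K_\infty)\cong W_\infty\oplus t_\Lambda\mathrm{BP}(K_\infty)$ holds, and this splitting, being induced by a canonical exact sequence whose outer terms ($W$ and the $\mathrm{BP}$-part) are themselves defined level-by-level, is stable under the coinvariant functors; the cokernel $\mathrm{Hom}(\Psi((F_m^{\text{cyc}})_\infty),\mu_{p^\infty})$ of the ``almost'' direct-sum sequence at level $m$ is then squeezed between a zero module and a module that a diagram chase forces to vanish, whence $\Psi(F_m^{\text{cyc}})=(0)$ for all $m$ by Proposition~\ref{p:20-5}(i) again.

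The main obstacle I anticipate is precisely the non-semisimple behaviour of the modules under descent that the paper itself flags: a priori the coinvariants of a direct sum need not detect the splitting, because the relevant $\Gamma$-action may have pseudo-null (but nonzero) fixed/cofixed modules introduced by the arithmetic at $p$. The way around it is to exploit that the obstruction is measured by a genuinely \emph{finite} module $\mathrm{Hom}(\Psi,\mu_{p^\infty})$ — so it suffices to show its order does not grow — combined with the precise control of $\widehat{K^*_{n,v}}\cong(\widehat{K^*_{h,w}})^{G^h_{n,v}}$ and the cohomological triviality of local and global universal norms already established in the proof of Lemma~\ref{l:10-5}. Feeding those into the Sinnott sequence $(\mathrm{Sinn}_n)$ at the levels of $K$ and of $F_m$ and using that an induced module has no nontrivial $\Gamma_v$-cohomology in the relevant degree, one gets that $\Psi(F_m^{\text{cyc}})$ is a sub\-quotient of $\Psi(K^{\text{cyc}})\otimes(\text{something})$, hence zero. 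The remaining bookkeeping — checking that $F_\infty/K$ being \emph{independent} of $K^{\text{cyc}}$ is exactly what guarantees the auxiliary $\Zfield_p$-directions contribute no new ramification at $p$ beyond what $K^{(2)}$ already sees — is routine given Lemma~\ref{l:5-5} and the hypothesis that $K^{(2)}$ is special.
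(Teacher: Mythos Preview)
Your reduction to $\mu_p\subset K$ is fine and matches the paper. After that, however, the proposal drifts away from the actual mechanism and never lands on the one clean step that does all the work.

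The paper's argument is this: set $H_m=\Gal(F_m^{\text{cyc}}/K^{\text{cyc}})$. Because $H_m$ acts trivially on $\Zfield_p(1)$, the definition of $W(\cdot)$ gives $W(K^{\text{cyc}})\cong W(F_m^{\text{cyc}})^{H_m}$, and Shapiro's lemma makes $W(F_m^{\text{cyc}})$ cohomologically trivial for $H_m$. Taking $H_m$-invariants in the exact sequence of {\bfseries (P1)} then yields $t_\Lambda\mathrm{BP}(K^{\text{cyc}})\cong t_\Lambda\mathrm{BP}(F_m^{\text{cyc}})^{H_m}$, and comparing the Galois description in {\bfseries (P2)} at the two levels gives
\[
\Psi(K^{\text{cyc}})\;\cong\;\Psi(F_m^{\text{cyc}})^{H_m}.
\]
Now $H_m$ is a finite $p$-group acting on the finite $p$-group $\Psi(F_m^{\text{cyc}})$; if the invariants vanish, the whole module vanishes. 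That is the entire proof.

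Against this, three concrete problems with your sketch:
\begin{itemize}
\item You invoke Lemma~\ref{l:5-10}, but that lemma compares characteristic ideals and detects \emph{pseudo-nullity}; it says nothing about literal vanishing of a finite module, which is what is at stake here.
\item Your claim that ``$\Psi(F_m^{\text{cyc}})$ is a subquotient of $\Psi(K^{\text{cyc}})\otimes(\text{something})$'' has the arrow pointing the wrong way. The natural map goes from the base to the invariants at level $m$, not the reverse; it is $\Psi(K^{\text{cyc}})$ that is identified with $\Psi(F_m^{\text{cyc}})^{H_m}$. Without the $p$-group fixed-point principle you have no mechanism to pass from vanishing of invariants to vanishing of the module, and nothing in your outline supplies one.
\item The appeal to Lemma~\ref{l:5-5} and the ``special'' hypothesis is misplaced: Proposition~\ref{p:20-10} does not assume $K^{(2)}$ is special, and no ramification bookkeeping is needed.
\end{itemize}
What is missing is precisely the identification $\Psi(K^{\text{cyc}})\cong\Psi(F_m^{\text{cyc}})^{H_m}$ (via the cohomological triviality of $W$) together with the elementary fact that a $p$-group action on a nontrivial finite $p$-group always has nontrivial fixed points. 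Once you see that, the heavier machinery you propose (coinvariants over $K_\infty^{(2)}$, the Sinnott sequence at both levels, growth-of-order estimates) becomes unnecessary.
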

%%
%%proof
%%
\begin{proof} Since Galois descent works smoothly on $A'( \cdot )$ and $X'(\cdot )$ through $\Delta=\Gal(K(\mu_{p})/K)$ because $p$ is odd, we may as well suppose that $K$ contains $\mu_p$. Put $H_m=\Gal\big(F_m^{\text{cyc}}/K^{\text{cyc}}\big)$. For the first property, because $H_m$ acts trivially on $\Zfield_p(1)$ and by definition of $W\big(F_m^{\text{cyc}}\big)$, one has $W\big(K^{\text{cyc}}\big)\cong W\big(F_m^{\text{cyc}}\big)^{H_m}$. Moreover, Shapiro's lemma shows that $W\big(F_m^{\text{cyc}}\big)$ is $H_m$-cohomo\-logically trivial, and the $H_m$-cohomology of the exact sequence describing $t_{\Lambda}\text{BP}\big(K^{\text{cyc}}\big)$ in {\bfseries (P3)} implies immediately that 
\begin{equation*}
t_{\Lambda}\text{BP}\big(K^{\text{cyc}}\big)\cong t_{\Lambda}\text{BP}\big(F_m^{\text{cyc}}\big)^{H_m}.
\end{equation*}
 It follows that the $H_m$-fixed field of $\big(F_{m}^{\text{cyc},\text{BP}}\big)\cap N'\big(F_m^{\text{cyc}}\big)$ (with obvious notations, see property {\bfseries (P2)} above) is $K^{\text{cyc},\text{BP}}\cap N'\big(K^{\text{cyc}}\big)$, i.e. $\Psi\big(K^{\text{cyc}}\big)=\Psi\big(F_m^{\text{cyc}}\big)^{Hm}$. As $H_m$ is a $p$-group, the nullities of $\Psi\big(K^{\text{cyc}}\big)$ and of $\Psi\big(F_m^{\text{cyc}}\big)$ are equivalent.
\end{proof}
%%
%%exemples
%%
\begin{exs}\label{ex:20-5} Take $K=\Qfield(\mu_p)$, which admits a special $\Zfield_p^2$-extension of Greenberg type as in exemple~\ref{ex:5-1} of \S\ref{sec5}. Over $\Qfield(\mu_p)$ one can easily check that $M_{\infty}=K^{\text{BP}}_{\infty}$, $T_{\infty}={N'}_{\infty}$, $W\big(K^{\text{cyc}})=(0)$, hence $\Psi(K^{\text{cyc}})=(0)$. With the usual notations $(\cdot )^{\pm}$ for the $\pm$-eigenspaces of complex conjugation, recall that the classical \emph{GC} for $K^*$ is equivalent to the finiteness of $X'\big(K^{\text{cyc}}\big)^+$, and Vandiver's conjecture to the vanishing of $X'(K_{\infty})^+$. If we follow Vandiver, the nullity of $X'\big(K^{\text{cyc}}\big)^0$ is reduced to that of its minus part, but it is well known that the later property automatically holds for CM-fields (\cite{kiwa2}, \cite{ikuz1}). Summarizing, under Vandiver's conjecture (which has been checked up to $p<2^{27}$), proposition~\ref{p:20-10} applies.
\end{exs}
%%
%%section
%%
\section{\large On $(p,i)$-regular fields\label{sec25}}
To justify the introduction of this notion and the main theorem~\ref{t:25-5} of this section, it is not superfluous to recall the general relations between the conditions called {\bfseries (I)}, {\bfseries (II)}, {\bfseries (III)}, at the beginning of \S\ref{sec15}. Roughfly speaking \emph{GGC} is equivalent to {\bfseries (I)}+{\bfseries (II)}+{\bfseries (III)} and {\bfseries (II)}+{\bfseries (III)} imply {\bfseries (I)}, but {\bfseries (I)} and {\bfseries (III)} are independent \emph{a priori}. The $(p,i)$-regularity of the base field will impose strong dependence relations between  {\bfseries (I)}, {\bfseries (II)}, {\bfseries (III)} (see the end of the proof of theorem~\ref{t:25-5}) but, because precisely this strength, limited to their $e_{i-1}$-parts.\medskip

So our task is now to produce families of number fields to which the above (or slightly modified) condition of vanishing obstructions in theorem~\ref{t:10-5} could be applied using e.g. proposition~\ref{p:20-10}. We would like also to replace the $p$-class groups by more amenable $p$-adic cohomology groups. To this end let us introduce the particular class of the so called \emph{$(p,i)$-regular fields ($i\in\Zfield$)}, for which our main reference will be \cite{mkol2}. A special cases has been much studied $i=0$ (the so-called \emph{$p$-rational fields}, \cite{amov1}, \cite{mo_ng1}).The gist of $(p,i)$-regularity is that it kills a part of our obstruction kernel, see theorem~\ref{t:25-5} below. Let us first recall or introduce some notations for a general number field $K$: by a slight abuse of language, $S$ denotes the set of $p$-primes of any algebraic extension of $k$, $G_S(K)$ the Galois group over $K$ of the maximal algebraic extension of $K$ unramified outside $S$, $\Gcal_S(K)$ the maximal pro-$p$-quotient of $K_S(K)$, $Y(K)=\Gcal_S(K)^{\text{ab}}$, $Y(K^{\text{cyc}})=\varprojlim Y(K_n)$, $K'=K(\mu_p)$, $K'_{\text{cyc}}=$the cyclotomic $\Zfield_p$-extension of $K'$, $\Gamma=\Gal(K'_{\text{cyc}}/K')$, $\Lambda=\Zfield_p[[\Gamma]]$, $G=\Gal(K'_{\text{cyc}}/K$, $\Delta=\Gal(K'/K)$, cyclic of order $d$. Recall that for a Galois module $M$, the $j$-th Tate twist (with $j\in\Zfield$ is denoted $M(j)$.
%%
%%proposition-definition
%% 
\begin{prop-def}\label{p:25-5} For any $i\in\Zfield$, the number field $K$ is called $(p,i)$-regular if and only if  it satisfies one of the following equivalent conditions:
\begin{description}
\item[(1)] The Galois cohomology group $H^2\big(G_S(K),\Zfield/p\Zfield(i)\big)$ is trivial,
\item[(2)] The co-descent module $Y\big(K'_{\text{cyc}}\big)(-i)_{G}$ is   $\Zfield_p$-free,
\item[(3)] If $K$ contains $\mu_p$ or $i\equiv 1\mod d$, then $K$ admits only one $p$-place and $A'(K)$ is null. If $K$ does not contain $\mu_p$ and $i\not\equiv 1\mod d$, then $A'(K')(i-1)^{\Delta}$ is null, and for any $p$-place $v$ of $K$, the local degree $[K_v(\mu_p):K_v]$ does not divide $(i-1)$.
\end{description} 
If $i\neq 1$, there is also equivalence with any of the following:
\begin{description}
\item{(4)} The $p$-adic cohomology group $H^2\big(G_S(K),\Zfield_p(i)\big)$ is trivial,
\item{(5)} The module  $Y\big(K'_{\text{cyc}}\big)(-i)_{\Delta}$ is   $\Lambda$-free.
\end{description} 
\end{prop-def}
%%
%%proof
%%
\begin{proof}For the equivalence between properties {\bfseries (1)} and {\bfseries (4)}, see \cite{as_ng1}. For the equivalence between {\bfseries (4)} and{\bfseries (5)}, notice first that for $i\neq 1$, 
\begin{equation*}
t_{\Zfield_p}\big( Y\big(K'_{\text{cyc}}\big)(-i)_G\big)
\end{equation*}
 is the Pontryagin dual of $H^2\big(G_S(K),\Zfield_p(i)\big)$ (\cite{tngu1}, corollary~5.4), and that the property of $(p,i)$-regularity propagates along the cyclotomic tower (see {\bfseries (M2)} below). Going up the tower, we also know that $\varprojlim t_{\Zfield_p}\big(Y(K'_{\text{cyc}})(-i)_{\Delta}\big)_{\Gamma^{p^n}}\cong t_{\Lambda}\big(Y(K'_{\text{cyc}}\big)(-i)_{\Delta}$ (see \cite{ng_ni1}, proposition~3.1), hence condition {\bfseries (4)} implies that $Y\big(K'_{\text{cyc}}\big)(-i)_{\Delta}$ has no $\Lambda$-torsion. But more is true: from the structure theorem for a noetherian $\Lambda$-torsion-free module $M$, it is easy to show that $M$ is $\Lambda$-free if and only if $M_{\Gamma}$ is $\Zfield_p$-free, so {\bfseries (4)} implies {\bfseries (5)}. Conversely, condition  {\bfseries (5)} implies that $t_{\Zfield_p}\big(Y\big(K'_{\text{cyc}}\big)(-i)_{G}\big)\cong H^2\big(G_S(K),\Zfield_p(i)\big)=(0$.
\end{proof}

\paragraph{ Miscellaneaous properties of $(p,i)$-regular fields}
\begin{description}
\item[(M1)] By {\bfseries (1)} of proposition-definition~\ref{p:25-5},  $(p,i)$-regularity property is periodic $\mod d$. Of course, if $K$ contains $\mu_p$, then $d=1$ and all the properties of  $(p,i)$-regularity are equivalent to $p$-rationality (see \S\ref{sec30}). A typical example is $\Qfield_p(\mu_p)$, which is $p$-rational if and only if $p$ is a regular prime. One can take advantage of the periodicity $\pmod d$ to choose for each class $i$ a representative $i\geq 2$. Then the Quillen-Lichtenbaum conjecture (now a theorem of Voevodsky-Rost) asserts the existence of a canonical isomorphism $H^2\big(G_S(K),\Zfield_p(i)\big)\cong K_{2i-2}(O_K)\otimes\Zfield_p$ so that the condition {\bfseries (4)} of the proposition-definition~\ref{p:25-5} is equivalent to the \emph{nullity} of $K_{2i-2}(O_K)\otimes\Zfield_p$.
\item[(M2)] The property  of $(p,i)$-regularity goes up any normal $p$-extension $L/K$ which is unramified outside $(p)$. This gives an amply infinite number of $(p,i)$-regular fields.
\item[(M3)] For $i\neq 1$, it is easy to see that the finiteness of $H^2(G_S(K),\Zfield_p(i))$ is equivalent to the nullity of $H^2(G_S(K), \big(\Qfield_p/\Zfield_p\big)(i)$. The latter property is usually called \emph{$i^{\text{th}}$-twisted Leopoldt conjecture} (only for $i\neq 1$ because $H^2(G_S(K),\mu_{p^{\infty}})=\Br_S(K)\{p\}$ is not trivial in general), which is thus implied by $(p,i)$-regularity.
\end{description}

Keeping the notations of the previous sections, we now study \emph{GGC} for $(p,i)$-regular fields $K$, in the same spirit as the article \cite{ mkur1} (see also the example~\ref{ex:25-5} {\bfseries (3)} below for the \emph{``$e_i$-parts''} of the Vandiver conjecture. We shall use the expression ``modulo the Leopoldt conjecture'' in the same sense  as the previous expression ``modulo-the Kuz'min-Gross conjecture''.
%%
%%theorem
%%
\begin{thm}\label{t:25-5} Let  $k$ be an imaginary $(p,i)$-regular field admitting a special $\Zfield_p^2$-extension. Then modulo the Kuz'min-Gross and the Leopoldt conjecture, $e_{i-1}X'\big(\widetilde{k}(\mu_p)\big)$ is pseudo-null. In particular, if $k$ is $(p,1)$-regular, or if $k$ contains $\mu_p$ and is $p$-rational, then $k$ satisfies \emph{GGC}.
\end{thm}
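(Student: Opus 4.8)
The idea is to run the inductive machinery of Sections~\ref{sec5}--\ref{sec20} after applying the idempotent $e_{i-1}$ of $\Zfield_p[\Delta]$, where $\Delta=\Gal\big(k(\mu_p)/k\big)$. First I would replace $k$ by $k'=k(\mu_p)$: it is again imaginary, normal over $\Qfield$, admits a special $\Zfield_p^2$-extension (translate the given one, e.g.\ $K^{(2)}\!\cdot\Qfield(\mu_p)$), and — since $p$ is odd — Galois descent through $\Delta$ identifies $X'(\widetilde{k})$ with $e_0X'\big(\widetilde{k}(\mu_p)\big)$, so that for $i\equiv 1\pmod d$ (in particular for $(p,1)$-regular $k$) or for $d=1$ ($\mu_p\subset k$, $p$-rational) the assertion ``$e_{i-1}X'\big(\widetilde k(\mu_p)\big)$ pseudo-null'' is precisely \emph{GGC'} for $k$. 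By Definition~\ref{d:10-5} and \cite{sfuj1}, lemma~6 we may moreover take the special $\Zfield_p^2$-extension of $k'$ in the form $K^{(2)}=(k')^{\mathrm{cyc}}\cdot F_\infty$ with $F_\infty=\cup F_m$ an independent $\Zfield_p$-extension. Every field occurring in the associated towers — the $F_m$, the $F_m(\mu_p)^{\mathrm{cyc}}$, the $K_{m,n}$, and every finite layer of $\widetilde k(\mu_p)/k'$ — is a normal $p$-extension of $k$ unramified outside $p$, hence is again $(p,i)$-regular by property~\textbf{(M2)}.

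\textbf{Step 1: the $e_{i-1}$-part of $K^{(2)}$ is ``green''.} I would apply Theorem~\ref{t:10-5}, localized at $e_{i-1}$: it suffices that the three obstruction kernels $\varprojlim_m\big(X'\big(F_m(\mu_p)^{\mathrm{cyc}}\big)^0\big)^{\Gamma}$, $\varprojlim_m\Psi\big(F_m(\mu_p)^{\mathrm{cyc}}\big)$ and $\varprojlim_m A'\big(F_m(\mu_p)\big)$ become finite — in fact trivial — after applying $e_{i-1}$. This is exactly what $(p,i)$-regularity provides. By proposition-definition~\ref{p:25-5}\,(2) and~(5), $(p,i)$-regularity of $F_m$ says that $Y\big(F_m(\mu_p)^{\mathrm{cyc}}\big)(-i)_{\Delta}$ is $\Lambda$-free; equivalently (characterization~(3)) the relevant $\Delta$-eigenspace of $A'\big(F_m(\mu_p)\big)$ vanishes, and (property~\textbf{(M3)}) the $i$-th twisted Leopoldt statement holds for $F_m$. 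Feeding $\Lambda$-freeness of $Y(-i)_{\Delta}$ into the Kummer-theoretic dictionary of Section~\ref{sec20} — $X'(K_\infty)^0$ is the Kummer dual of the cokernel $V_\infty$ of $f_\Lambda Y(K_\infty)$ in a free module (\textbf{(P3)}, proposition~\ref{p:20-5}\,(ii)), and $\Gal(T_\infty/K_\infty)$ is the Kummer dual of $\Hom\big(\Psi(K_\infty),\mu_{p^\infty}\big)$ (\textbf{(P2)}, proposition~\ref{p:20-5}\,(i)) — forces $e_{i-1}$ of $X'\big(F_m(\mu_p)^{\mathrm{cyc}}\big)^0$ and of $\Psi\big(F_m(\mu_p)^{\mathrm{cyc}}\big)$ to vanish, while proposition~\ref{p:20-10} guarantees that this vanishing of $\Psi$ is consistent along the tower. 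Hence the hypotheses of Theorem~\ref{t:10-5}, read in the $e_{i-1}$-eigenspace, hold, and $e_{i-1}X'\big(K^{(2)}\big)$ is pseudo-null over the $e_{i-1}$-component of $\Lambda^{(2)}$.

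\textbf{Step 2: induction and conclusion.} Since $|\Delta|$ is prime to $p$, the functor $e_{i-1}(-)$ is exact and commutes with the inverse limits and snake-lemma diagrams in the proof of Theorem~\ref{t:5-5}; moreover the only non-pseudo-null term appearing there, a copy of $\Zfield_p$ with trivial $\Delta$-action, is killed by $e_{i-1}$ when $i-1\not\equiv 0\pmod d$ and is pseudo-null anyway (the relevant $\Zfield_p^d$ has $d\geq 2$) when $i-1\equiv 0$. Therefore, building a tower $K^{(2)}\subset\dots\subset\widetilde k(\mu_p)$ as in Theorem~\ref{t:5-5} and propagating $e_{i-1}$-pseudo-nullity step by step via lemma~\ref{l:5-10}, we obtain that $e_{i-1}X'\big(\widetilde k(\mu_p)\big)$ is pseudo-null. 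Finally, for $(p,1)$-regular $k$ (or $\mu_p\subset k$ $p$-rational), $e_{i-1}X'\big(\widetilde k(\mu_p)\big)=X'(\widetilde k)$ by the descent of the first paragraph, so \emph{GGC'} holds for $k$; since $k$ is normal over $\Qfield$, admits a special $\Zfield_p^2$-extension and (modulo Kuz'min--Gross) condition~(D) then holds for $\widetilde k$ by lemma~\ref{l:5-5}, \emph{GGC'} and \emph{GGC} are equivalent, and \emph{GGC} follows.

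\textbf{Main obstacle.} The delicate point is the twist-and-eigenspace bookkeeping in Step~1: $(p,i)$-regularity is a condition on $Y\big(F_m(\mu_p)^{\mathrm{cyc}}\big)(-i)_{\Delta}$, and one must match the Tate twists introduced by Kummer duality so that it kills precisely the eigenspace $e_{i-1}$ of each of $X'^0$, $\Psi$ and $A'(F_m)$ — and do so in the \emph{non-cyclotomic} direction $F_\infty$, which is why the strength of the argument, and hence the conclusion, is restricted to the $e_{i-1}$-part rather than all of $X'$. A secondary (but necessary) verification is that Theorems~\ref{t:5-5} and~\ref{t:10-5}, with their chains of snake lemmas and inverse limits over $m,n$, genuinely commute with $e_{i-1}(-)$; this is routine since $|\Delta|$ is invertible in $\Zfield_p$, but has to be checked at each stage. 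The Leopoldt conjecture is used exactly where the structure of $\widetilde k$ (and the passage from $\widetilde k(\mu_p)$) intervenes — on the $e_{i-1}$-eigenspace it is in any case a consequence of $(p,i)$-regularity via~\textbf{(M3)}.
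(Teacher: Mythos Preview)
Your overall strategy is the paper's: show that $(p,i)$-regularity kills the $e_{i-1}$-parts of the three obstruction kernels $X'(\cdot)^0$, $\Psi(\cdot)$, $A'(\cdot)$ via the Kummer dictionary \textbf{(P2)}--\textbf{(P3)} of \S\ref{sec20}, apply the $e_{i-1}$-localized Theorem~\ref{t:10-5} to get a green $\Zfield_p^2$-extension, then climb the tower via Theorem~\ref{t:5-5}.

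There is however a genuine error in your propagation step. Having replaced the base by $k'=k(\mu_p)$ and written $K^{(2)}=(k')^{\mathrm{cyc}}\cdot F_\infty$ with $F_\infty/k'$ a $\Zfield_p$-extension, you assert that each $F_m$ is ``a normal $p$-extension of $k$'' and hence $(p,i)$-regular by \textbf{(M2)}. Both assertions fail. Since $F_m\supset k'$, the degree $[F_m:k]$ is $d\cdot p^m$ with $d=|\Delta|$ prime to $p$, so $F_m/k$ is not a $p$-extension and \textbf{(M2)} does not apply. Worse, for a field containing $\mu_p$ the modules $\Zfield/p\Zfield(i)$ are all isomorphic as $G_S$-modules, so ``$(p,i)$-regularity of $F_m$'' collapses to full $p$-rationality of $F_m$---a statement that does \emph{not} follow from $(p,i)$-regularity of $k$ alone (it would require $(p,j)$-regularity of $k$ for every $j$). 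The repair, which is precisely the paper's route, is to keep $F_\infty$ at the level of $k$ (so that $F_m/k$ \emph{is} a $p$-extension and \textbf{(M2)} applies to give $(p,i)$-regularity of $F_m$), set $F'_m=F_m(\mu_p)$, and then translate $(p,i)$-regularity of $F_m$ into the vanishing of $e_{-i}\,t_\Lambda Y\big({F'_m}^{\mathrm{cyc}}\big)$, hence of $e_{i-1}\Psi\big({F'_m}^{\mathrm{cyc}}\big)$ and $e_{i-1}X'\big({F'_m}^{\mathrm{cyc}}\big)^0$, via proposition-definition~\ref{p:25-5}(5) together with \textbf{(P2)}--\textbf{(P3)}.

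One place where your sketch is actually more direct than the paper: you read $e_{i-1}A'(F'_m)=0$ straight off criterion~(3) of proposition-definition~\ref{p:25-5}. The paper instead derives $e_{i-1}A'(k')=0$ by first showing $e_{i-1}A'\big({k'}^{\mathrm{cyc}}\big)=0$ through the Kummer duality $\Gal(M^{\mathrm{cyc}}/{N'}^{\mathrm{cyc}})\cong\Hom\big(A'({k'}^{\mathrm{cyc}}),\mu_{p^\infty}\big)$ and then invoking Ozaki's capitulation result \cite{moza1} to inject $e_{i-1}A'(k')$ into it. Once the $F_m$-level setup is corrected as above, your shortcut via~(3) is legitimate and shorter.
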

%%
%%proof
%%
\begin{proof} Take a special $\Zfield_p^2$-extension $K^{(2)}/k$ as in theorem~\ref{t:10-5}, obtained by composing $k^{\text{cyc}}$ with an independent $\Zfield_p$-extension $F_{\infty}/k$ and introduce as before $k'=k(\mu_p)$, $F'_m=F_m(\mu_p)$, $\Delta=\Gal(k'/k)=\Gal(F'_m/F_m)$. As usual, for $j\in \Zfield$, consider the idempotent $e_j$ of $\Zfield_p[\Delta]$ attached the $j$-th power of the Teichm\"uller character. Recall that $M(j)^{\Delta}\cong M(j)_{\Delta}\cong e_jM$ and, by duality, $\big(M^{\vee}\big)(j)=\big(M(-j)\big)^{\vee}$, where the Galois action on the Pontryagin dual $M^{\vee}$ is defined by $(\sigma f)(m)=f(\sigma^{-1}(m))$. Let us show tha $e_{i-1}X'\big(K^{(2)}(\mu_p)\big)$ is pseudo null by considering two cases:
%%
%%paragraph
%%
\paragraph{(i)} If $i=1$, it follows from our hypotheses and proposition-definition~\ref{p:25-5} {\bfseries (3)} that $A'(K_{m,n})=0$ for all the intermediary fields $K_{m,n}$ (in the notations of the beginning of \S\ref{sec15}), so that $X'\big(K^{(2)}\big)=0$. This is a stronger result than needed, but note that it is not so easy to produce from scratch a field $K$ such that all $A'(K_{m,n})$ vanish (but see proposition~\ref{p:15-1}).
%%
%%paragraph
%%
\paragraph{(ii)} If $i\neq 1$, according to proposition-definition~\ref{p:25-5} (5), $k$ is $(p,i)$-regular if and only if $e_{-i}Y\big({k'}^{\text{cyc}}\big)$ is $\Lambda$-free, or equivalently $e_{-i}t_{\Lambda}Y\big({k'}^{\text{cyc}}\big)=(0)$ (with obvious adapted notations, everything happening now over $({k'}^{\text{cyc}}$).  After {\bfseries (P3)}, \S\ref{sec20}, the second condition is equivalent to $e_{i-1}X'\big({k'}^{\text{cyc}}\big)^0=0$ As for the first, applying $e_{-i}$ to the exact sequence decomposing  $t_{\Lambda}Y\big({k'}^{\text{cyc}}\big)$ at the end of {\bfseries (P2)}, we see that  $e_{-i}t_{\Lambda}Y\big({k'}^{\text{cyc}}\big)=(0)$ implies that 
\begin{equation*}
e_{-i}t_{\Lambda}\text{BP}\big({k'}^{\text{cyc}}\big)=(0)=e_{-i}\Hom\big(\Psi\big({k'}^{\text{cyc}}\big),\mu_{p^{\infty}}\big)\, .
\end{equation*} 
 But we know also from {\bfseries (P2)} that the nullity of $e_{i-1}\Psi\big({k'}^{\text{cyc}}\big)$ implies that
\begin{equation*}
e_{-i}\Gal\big(M^{\text{cyc}}/{N'}^{\text{cyc}}\big)\cong e_{-i}\Gal\big({K'}^{\text{BP}}_{\text{cyc}}/T^{\text{cyc}}\big)=e_{-i}t_{\Lambda}\text{BP}\big({k'}^{\text{cyc}}\big)\, ,
\end{equation*}
and thus the $(p,i)$-regularity hypothesis is equivalent to 
\begin{equation*}
e_{i-1}\Psi\big({k'}^{\text{cyc}}\big)=e_{i-1}X'\big({k'}^{\text{cyc}}\big)^0=0\, .
\end{equation*}
At this point, it remains to deal with the $e_{i-1}$-part of $A'(k')$. Kummer duality in Iwasawa theory shows that 
\begin{equation*}
\Gal\big(M^{\text{cyc}}/{N'}^{\text{cyc}}\big)\cong \Hom\big(A'\big({k'}^{\text{cyc}}\big),\mu_{p^{\infty}}\big),
\end{equation*}
 hence $e_{i-1}A'\big({k'}^{\text{cyc}}\big)=(0)$ because $(p,i)$-regularity implies 
 \begin{equation*}
 e_{-i}\Gal\big(M^{\text{cyc}}/ {N'}^{\text{cyc}}\big)\\=(0),
 \end{equation*}
 as we have just seen. 

But a result of Ozaki, \cite{moza1} (for an update see \cite{tngu4}, proposition ~2.2), states that the natural image of $X'\big({k'}^{\text{cyc}}\big)^{0}$ in $A'(k')$ coincide with ${\rm Cap}\big({k'}^{\text{cyc}}/k'\big)$, hence $e_{i-1}{\rm Cap}\big({k'}^{\text{cyc}}/k'\big)=(0)$, i.e.  $e_{i-1}A'(k')$ injects into $A'\big({k'}^{\text{cyc}}\big)$. Summarizing, $(p,i)$-regularity simultaneously annihilates the $e_{i-1}$-part of $A'(k')$, $\Psi\big({k'}^{\text{cyc}}\big)$ and  $X'\big({k'}^{\text{cyc}}\big)^{0}$, and the conditions in theorem~\ref{t:10-5} apply to the $e_{i-1}$-part of  the relevant objects over ${k'}^{\text{cyc}}$. Under our hypotheses, proposition~\ref{p:20-10} shows that  $(p,i)$-regularity of $k$ propagates to any $F_m$ and so $X'\big(K^{(2)}(\mu_p)\big)$ is pseudo-null. The passage from $X'\big(K^{(2)}(\mu_p)\big)$ to$X'\big(\widetilde{k}(\mu_p)\big)$ then proceeds as in theorem~\ref{t:5-5}, by climbing up the  $e_{i-1}$-part of a tower of green multiple $\Zfield_p$-extensions.
\end{proof}
%%
%%Exemples
%%
\begin{exs}\label{ex:25-5} \hfill\\
%%
%%paragraph
%%
\vspace{-10mm}\paragraph{(1)} The $p$-th cyclotomic field $k=\Qfield(\mu_p)$ admits a special $\Zfield_p^2$-extension of Greenberg type as in the example~1 of \S\ref{sec5} and theorem~\ref{t:15-5} applies. By the miscellaneous property {\bfseries (M1)}, $\Qfield(\mu_p)$ is $p$-rational if and only if $p$ is a regular prime, in which case $X'(\widetilde{k})$ is pseudo-null (it is actually null). But unfortunately we do not know whether there exists an infinite number of regular primes.
%%
%%paragraph
%%
\paragraph{(2)} Keep $k=\Qfield(\mu_p)$. Since $H^2(G_S (k^+),\Zfield/p\Zfield)= H^2(G_S( k),\Zfield/p\Zfield)^+$, the $p$-rationality of $\Qfield(\mu_p^+)$ is equivalent to the Vandiver conjecture . In fact, denoting by $A$ the $p$-class group of $\Qfield(\mu_p)$ and $e_i$ the usual idenpotent of $\Gal(\Qfield(\mu_p)/\Qfield)$, it is known that for $0\leq i < p-1$, $e_i(A/pA)\cong H^2\big(G_S\big(\Qfield(\mu_p)\big), \Zfield/p\Zfield(1-i)\big)$ (\cite{ikuz1}, lemma~1.2). The Vandiver conjecture is thus equivalent to the vanishing of these cohomology groups for all even $0\leq  i < p-1$, and then theorem~\ref{t:25-5} implies the pseudo-nullity of the plus part of $X'(\widetilde{k})$. The occurence here of the $p$-class group in place of the higher \'etale $K$-groups $(i\geq 2)$ comes from the fact that $\Qfield\big(\mu_{p^{\infty}}\big)$ admits only one $p$-place so that $t_{\Lambda}Y\big(\Qfield\big(\mu_{p^{\infty}}\big)\big)=t_{\Lambda}{\rm BP}\big(\Qfield(\mu_{p^{\infty}})\big)$.
%%
%%paragraph
%%
\paragraph{(3)} The adequation of Vandiver's conjecture with the settings of theorem~\ref{t:25-5} is not fortuitous. Although the base field here is not totally real, it is natural to wonder where the classical \emph{GC} may come into play. Let us go back to the general situation and notations of the properties {\bfseries (P1)} to {\bfseries (P3)} in \S\ref{sec15}. Our main reference will be the appendix of \cite{ng_ni1}.

 The natural map $t_{\Lambda}Y\big({k'}^{\text{cyc}}\big)\rightarrow X'\big({k'}^{\text{cyc}}\big)$ induces a surjective map
\begin{equation*}
t_{\Lambda}{\rm BP}\big({k'}^{\text{cyc}}\big)\rightarrow Z\big({k'}^{\text{cyc}}\big):=\Gal\big({L'}^{\text{cyc}}/({L'}^{\text{cyc}}\cap {T}^{\text{cyc}} \big)\, .
\end{equation*}
In the CM case, 
\begin{equation*}
Z\big({k'}^{\text{cyc}}\big)=Z\big({k'}^{\text{cyc}}\big)^+\cong X'\big({k'}^{\text{cyc}}\big)^+\bigoplus \alpha\big(X'\big({k'}^{\text{cyc}}\big)\big)(-1)\, ,
\end{equation*}
where $\alpha(\cdot )$ denote the adjoint; it follows that the classical \emph{GC}, which predicts the finiteness of  $X'\big({k'}^{\text{cyc}}\big)^+$, is equivalent to the \emph{finiteness} of $Z\big({k'}^{\text{cyc}}\big)$, because the adjoint has no non trivial finite submodule. This allows to propose an extended Greenberg conjecture  \emph{EGC}(=conjecture~6.0.1 in \cite{ng_ni1}), namely the \emph{general} finiteness of $Z\big({k'}^{\text{cyc}}\big)$. When the base field is $k=k'=\Qfield(\mu_p)$, Vandiver's conjecture is equivalent to $Z\big({k'}^{\text{cyc}}\big)=X'\big({k'}^{\text{cyc}}\big)^+=(0)$ (which implies the condition {\bfseries (i)} of proposition~\ref{p:15-1}). This also suggests to call ``Vandiver's property'' for a CM field $k$ (not a conjecture !) the nullity of $Z\big({k'}^{\text{cyc}}\big)$.
%%
%%paragraph
%%
\paragraph{(4)}  One could also examine directly what happens when $k'$ is a $p$-rational field which is the maximally totally real subfield  of $K=k'(\mu_p)$, with $\Delta=<c>$ generated by the complex conjugation. Assume also  that $K$ admits a special $\Zfield_p^2$-extension $K^{(2)}/K$ which is of  Greenberg type (see example~\ref{ex:5-1}, \S\ref{sec5}) where  $K^{(2)}$ is the compositum of the cyclotomic$\Zfield_p$-extension $K^{\text{cyc}}$ and an independant $\Zfield_p$-extension $F_{\infty}=\cup F_m$, with $\Gal(K/\Qfield)$ acting on $\Gal(F_{\infty}/K)$ by multiplication by $-1$. Denoting  by $(.)^{\pm}$ the eigenspaces of $c$, we have $H^2(G_S(K),\Zfield/p\Zfield)^+=H^2(G_S(k'),\Zfield/p\Zfield)=(0)$ by the $p$-rationality of $k'$. With the notations of proposition~\ref{p:25-5} this means the nullity of $\big(t_{\Lambda}Y\big(K^{\text{cyc}}\big)\big)^+$, where $\Lambda $ is the Iwasawa algebra of $\Gamma=\Gal\big(K^{\text{cyc}}/K\big)\cong\Gal\big(F_m^{\text{cyc}}/F_m\big)$. By construction, the extension $F_m/k'$ is no longer abelian but dihedral, more precisely, the group $G_m=\Gal(F_m/k')$ is generated by $H_m=\Gal(F_m/K)=<\sigma>$ and $\nabla=<\tau>$, where $\tau$ is a (non canonical) lift of $c$ such that $\tau \sigma=\sigma^{-1}\tau$ (it is again recommended to draw a Galois diagram). As a (non normal) subgroup of $G_m$, $\nabla$ acts on any $G_m$-module  $M$, and we write $M^{(\pm 1)}$for the eigenspaces of $M$ corresponding to the eigenvalues  $\pm 1$ (in additive notation).  Here  we prefer  to avoid the notation $M^{\pm}$ because the $M^{(\pm 1)}$ are not necesseraly stabilized by the whole $G_m$ since $F_m$ is not CM; but note that above $F_m$ we are allowed to use the (non canonical) $(.)^{(\pm 1)}$-parts of $\Lambda$-modules because any lift of $\tau$ to the level of $F_m^{\text{cyc}}$commutes with $\Gamma$. Descending from $F_m$ to $K$ in order to exploit the CM hypothesis  on $K$, and taking $H_m$-invariants to trivialize the action of $\sigma$, we get $(M^{H_m})^{\pm}\cong (M^{H_m})^{(\pm 1)}$, and since $H_m$ is a $p$-group, the nullity of both $(M^{H_m})^{\pm}$ will be equivalent  to that of $M$. Here the vanishing of $\big(X'\big(F_m^{\text{cyc}}\big)^0\big)^{(+1)}$ will follow from the $p$-rationality of $k'$, and that of $\big(X'\big(F_m^{\text{cyc}}\big)^0\big)^{(-1)}$ from the CM hypothesis on $K$.\medskip

More generally, one could stick to the line of \cite{ikuz1}, fix a base field $k$ which is $p$-rational (and plays the role of $\Qfield$) and try to give conditions for a subextension of $k(\mu_p)/k$ to verify \emph{GGC}. For examples for families of $p$-rational fields, see references below.
%%
%%paragraph
%%
\paragraph{(5)} To produce examples outside the setting of theorem~\ref{t:15-10}, one could replace $\Qfield(\mu_p)$ by $k(\mu_p)$, where $k$ is a number field in which $p$ splits, and the $p$-class group by the higher \'etale $K$-groups (see \cite{tngu3}, theorem~1.1.2). For $i\geq 2$, we have  seen above that $(p,i)$-regularity corresponds to the vanishing of the $p$-primary part of the \emph{global} object $K_{2i-2}(O_k)$.
\end{exs}

%%
%%section
%%
\section{\large Appendix on $p$-rational fields\label{sec30}}
We would like to briefly compare our approach and Fujii's for the construction of a green $\Zfield_p^2$-extension. Recall that a $(p,0)$-regular field is called $p$-rational in \cite{mo_ng1}, the terminology being meant to suggest that such a field ``behaves like  $\Qfield$'' at the prime $p$.
%%
%%proposition
%%
\begin{prop}\label{p:30-5} A number field $K$ is $p$-rational if and only if it satisfies one of the following equivalent conditions
\begin{description}
\item[(i)] The pro-$p$-group $\Gcal_S(K)$ is free, i.e. $H^2(\Gcal_S(K),\Zfield/p\Zfield)(=H^2(G_S(K),\\ \Zfield/p\Zfield))$ is null;
\item[(ii)] $K$ verifies Leopoldt's conjecture at $p$ and $\Gcal_S(K)^{\text{ab}}$ is $\Zfield_p$-free (necessarily of rank $1+r_2$); 
\item[(iii)] The cokernel $W_S=W_s(K)$ of the diagonal map  $\mu_{p^{\infty}}\rightarrow \bigoplus _{v\in S}\mu_{p^{\infty}}(K_{v})$
is null and $A'\big(K(\mu_p)\big)(-1)^{\Delta}=(0)$ with $\Delta=\Gal\big(K(\mu_p)/K\big)$;
\item[(iv)] The $p$-Hilbert class-field ${\rm Hilb}_K$ of $K$ is contained in the composite $\tilde{K}$ of all the $\Zfield_p$-extensions of $K$, $W_S=(0)$, and the diagonal map $U_K/p\rightarrow \bigoplus_{v\in S}U_{v}/p$, where $U_K$ (resp. $U_{v}$) denotes the group of units of $K$ (resp. $K_{\nu}$, is injective.
\end{description}
\end{prop}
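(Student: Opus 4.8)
\emph{Proof plan.} The pivot will be the vanishing of $H^2\big(G_S(K),\Zfield/p\Zfield\big)$, which is condition {\bfseries (1)} of proposition-definition~\ref{p:25-5} for $i=0$, i.e. the definition of $p$-rationality ($(p,0)$-regularity). The equivalence with {\bfseries (i)} is little more than a translation: a pro-$p$ group $\mathcal G$ is free precisely when $\mathrm{cd}_p\mathcal G\le 1$, i.e. when $H^2(\mathcal G,\Zfield/p\Zfield)=0$, and the inflation map $H^2\big(\Gcal_S(K),\Zfield/p\Zfield\big)\to H^2\big(G_S(K),\Zfield/p\Zfield\big)$ is an isomorphism in the present range ($p$ odd, $S\supseteq S_p$; see \cite{ne_sc_wi1}), so ``$\Gcal_S(K)$ free'' is exactly ``$H^2\big(G_S(K),\Zfield/p\Zfield\big)=0$''. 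It then remains to match this with {\bfseries (ii)}, {\bfseries (iii)}, {\bfseries (iv)}.

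For {\bfseries (ii)} I would use the global Euler--Poincaré characteristic formula, which for $p$ odd and trivial coefficients reads $\dim_{\Ffield_p}H^1\big(G_S(K),\Zfield/p\Zfield\big)-\dim_{\Ffield_p}H^2\big(G_S(K),\Zfield/p\Zfield\big)=1+r_2$ (\cite{ne_sc_wi1}). Since $H^1\big(G_S(K),\Zfield/p\Zfield\big)=\Hom\big(Y(K),\Zfield/p\Zfield\big)$ with $Y(K)=\Gcal_S(K)^{\mathrm{ab}}$ a finitely generated $\Zfield_p$-module, vanishing of $H^2$ is equivalent to $\dim_{\Ffield_p}Y(K)/pY(K)=1+r_2$, i.e. to $Y(K)$ being $\Zfield_p$-free \emph{of rank} $1+r_2$; and because $\mathrm{rank}_{\Zfield_p}Y(K)\ge 1+r_2$ always, with equality iff Leopoldt's conjecture holds at $p$, this splits exactly as ``$\Gcal_S(K)^{\mathrm{ab}}$ is $\Zfield_p$-free'' together with ``Leopoldt at $p$'', which is {\bfseries (ii)}.

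For {\bfseries (iii)} the tool is Poitou--Tate duality over $G_S(K)$ applied to $\Zfield/p\Zfield$, whose Cartier dual is $\mu_p$. One has $H^2\big(G_S(K),\Zfield/p\Zfield\big)=0$ iff both its image in $\bigoplus_{v\in S}H^2(K_v,\Zfield/p\Zfield)$ and the locally trivial subgroup $\Ker\big(H^2(G_S(K),\Zfield/p\Zfield)\to\bigoplus_{v\in S}H^2(K_v,\Zfield/p\Zfield)\big)$ vanish. By local duality the former is the Pontryagin dual of $\mathrm{coker}\big(\mu_p(K)\to\bigoplus_{v\in S}\mu_p(K_v)\big)$, and this cokernel vanishes exactly when $W_S(K)=0$ (the case $\mu_p\subset K$ reducing to ``a single $p$-place'', using that a unique $p$-place forces $\mu_{p^\infty}(K_v)=\mu_{p^\infty}(K)$). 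The locally trivial subgroup is, by Poitou--Tate, the dual of $\Ker\big(H^1(G_S(K),\mu_p)\to\bigoplus_{v\in S}H^1(K_v,\mu_p)\big)$; ascending to $K'=K(\mu_p)$ and taking $\Delta$-invariants ($p\nmid d=[K':K]$ makes this harmless), class field theory identifies that group with $\Hom$ of a $\Delta$-isotypic piece of $A'(K')$, and keeping track of the twist coming from $\mu_p\simeq\Zfield/p\Zfield(1)$ together with the $\Delta$-descent turns it into the mod-$p$ avatar of $A'\big(K(\mu_p)\big)(-1)^{\Delta}$, whose vanishing (finiteness) is equivalent to that of $A'\big(K(\mu_p)\big)(-1)^{\Delta}$. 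Hence $H^2\big(G_S(K),\Zfield/p\Zfield\big)=0\Leftrightarrow W_S(K)=0$ and $A'\big(K(\mu_p)\big)(-1)^{\Delta}=0$, which is {\bfseries (iii)} (and is in agreement with proposition-definition~\ref{p:25-5}{\bfseries (3)} at $i=0$).

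Finally, for {\bfseries (iv)} I would route through {\bfseries (ii)}, using the classical class-field-theoretic exact sequence linking $U_K\otimes\Zfield_p$, the semilocal units $\bigoplus_{v\mid p}U_v$, the $p$-class group and $Y(K)$ --- the sequence underlying the descriptions of $\tilde K$ and of the Leopoldt defect, as in \cite{mo_ng1} with the class field theory input of \cite{lwas1}: the injectivity of $U_K/p\to\bigoplus_{v\in S}U_v/p$ together with $W_S(K)=0$ forces the ``unit and root-of-unity'' contribution to $Y(K)_{\mathrm{tors}}$ to vanish (a mod-$p$ strengthening of Leopoldt), while ``${\rm Hilb}_K\subset\tilde K$'' kills the class-group contribution, and jointly they are equivalent to $Y(K)$ being $\Zfield_p$-free, i.e. to {\bfseries (ii)}. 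The step I expect to be the genuine work is {\bfseries (i)}$\Leftrightarrow${\bfseries (iii)}: one must carry the Tate twist and the $\Delta$-action cleanly through the descent from $K(\mu_p)$ to $K$, pin down the local Poitou--Tate contribution as exactly $W_S(K)$ (not a coarser quotient of it), and verify that the uniform condition ``$W_S(K)=0$'' really encodes both branches --- ``$\mu_p\not\subset K_v$ for all $v\mid p$'' and ``$K$ has a single $p$-place'' --- of proposition-definition~\ref{p:25-5}{\bfseries (3)} for $i=0$; the remaining equivalences amount to assembling standard Iwasawa-theoretic and Euler-characteristic facts.
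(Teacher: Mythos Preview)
Your plan is correct and, for the one equivalence the paper actually proves in detail --- {\bfseries (ii)}$\Leftrightarrow${\bfseries (iv)} --- it is the same argument: the inertia exact sequence $\overline{U}_K\to\bigoplus_{v\in S}\overline{U}_v\to Y(K)\to A(K)\to 0$ is used to split $tY(K)$ into the class-group piece (killed exactly by $\mathrm{Hilb}_K\subset\tilde K$) and the inertia piece $tI(K)$ (killed exactly by $W_S=0$ together with injectivity of $\delta_1$, the latter bootstrapping to all $\delta_n$ and hence recovering Leopoldt). The paper defers {\bfseries (i)}--{\bfseries (iii)} entirely to \cite{mo_ng1}, so your Euler--Poincar\'e sketch for {\bfseries (i)}$\Leftrightarrow${\bfseries (ii)} and Poitou--Tate sketch for {\bfseries (i)}$\Leftrightarrow${\bfseries (iii)} go beyond what is written there; these are the standard arguments and are fine, the only point worth making explicit being the one the paper singles out, namely that the three conditions in {\bfseries (iv)} already force Leopoldt (so that {\bfseries (ii)} is genuinely recovered, not assumed).
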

%%
%%proof
%%
\begin{proof} The equivalence between properties {\bfseries (i)} to {\bfseries (iii)} is known and can be found in \cite{mo_ng1}. Let us show only the equivalence between {\bfseries (ii)} and {\bfseries (iv)}. Consider the following Galois situation: $A(K)\cong \Gal({\rm Hilb}_K/K)=\Gcal_S(K)^{\text{ab}}$, $t(\cdot )$ denotes $\Zfield_p$-torsion and $\varphi: \, tY(K)\rightarrow A(K)$ is induced by the natural projection  $Y(K)\rightarrow A(K)$. The nullity of $tY(K)$ is equivalent to that of both $\Ima \varphi$ and $\Ker \varphi$. Since $\Ima\varphi=\Gal\big({\rm Hilb}_K/\widetilde{L}\cap ({\rm Hilb}_K\big)$, its nullity means that ${\rm Hilb}_K\subset \widetilde{K}$. To study the nullity of  $\Ker\varphi$, let us write  down the exact sequence of class field theory relative inertia
\begin{equation*}
\begin{tikzcd}[sep=small]
0\arrow[r] & \overline{U}_K\arrow[r] & \bigoplus_{v\in S}\overline{U}_v\arrow[r] & Y(K)\arrow[rr] \arrow[dr]& &A(K)  \arrow[r]  &  0\\
& & & & I(K)\arrow[ur]&
\end{tikzcd}
\end{equation*}
where the injectivity on the left is due to Leopoldt's conjecture, and $I(K)$ denotes the inertia subgroup at all $p$-places. For $n$ large enough, the snake lemma for multiplication by $p^n$ gives two exact sequences 
\begin{equation*}
0\rightarrow W_S\rightarrow tI(K)\rightarrow U_K/p^n\xrightarrow{\delta_n}\bigoplus_{v\in S} U_v/p^n
\end{equation*}
and
\begin{equation*}
0\rightarrow tI(K)\rightarrow tY(K)\xrightarrow{\varphi}A(K)
\end{equation*}
Hence  $\Ker\varphi=(0)$ if and only if  $tI(K)=(0)$, if and only if $W_S=(0)$ and $\delta_n$ is injective. Assuming $W_S=(0)$, it is clear that the injectivity of $\delta_n$ is equivalent to that of 
\begin{equation*}
{\rm fr\,}U_K/p^n\longrightarrow\bigoplus _{v\in S}{\rm fr\,}U_v/p^n\, , 
\end{equation*}
where ${\rm fr\,}(.)$ denotes the quotient modulo $\Zfield_p$-torsion. Since ${\rm fr\,}U_K$ and ${\rm fr\,}U_v$ are lattices, this is in turn equivalent to the injectivity of ${\rm fr\,}U_K/p\rightarrow \bigoplus_{v\in S}{\rm fr\,}/pU_v$, hence, still assuming that $W_S=(0)$, equivalent to the injectivity of $\delta_1$. Actually the previous argument shows  that the nullity of $W_S$ and the injectivity of $\delta_1$ imply the injectivity of all $\delta_n$, which is in turn a particular case of one of the numerous formulations of Leopoldt's conjecture for $K$: there exists a constant $c=c(K)$ such that for all $n\gg 0$, for all $u \in U_K$, one has $u\in {U_K}^{p^n}$ if $u\in {U_v}^{p^{n+c}}$ for all $v\in S$ (see e.g. \cite{as_ng1} proposition~1). The equivalence bettween {\bfseries (ii)} and {\bfseries (iv)} is thus proved. 
\end{proof}

Recall \emph{Itoh's condition's} for a CM field $k$:
\begin{description}
\item[(I1)] the odd prime number $p$ splits completely in $k$,
\item[(I2)] Leopoldt's conjecture holds for $p$ and $k^+$,
\item[(I3)] $p$ does not divide the class number of $k$,
\item[(I4)] the module $X(k^+_{\text{cyc}})$ is trivial (this is stronger that \emph{GC}).
\end{description}
%%
%%proposition
%%
\begin{prop}\label{p:30-10} A CM-field $k$ which satisfies  Itoh's conditions is $p$-rational (and in particular verifies Leopoldt's conjecture).
\end{prop}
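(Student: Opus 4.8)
The plan is to verify directly the criterion of Proposition~\ref{p:30-5}(ii): a number field $F$ is $p$-rational exactly when Leopoldt's conjecture holds for $F$ and $Y(F)=\Gcal_S(F)^{\text{ab}}$ is $\Zfield_p$-free. First I would extract the facts from Itoh's conditions used throughout. By (I1), $p$ is unramified in $k$ and splits completely in $k$ and in $k^+$, so $k_v=\Qfield_p$ for every $v\mid p$, $\mu_p\not\subset k$ (hence also $\mu_p\not\subset k^+$), each $p$-place of $k^+$ splits into exactly two $p$-places of $k$, and, since the relevant local $p$-power roots of unity are trivial, $W_S(k)=W_S(k^+)=(0)$; by (I3), $A(k)=(0)$, and since $h_{k^+}\mid h_k$ also $A(k^+)=(0)$. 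Leopoldt for $k$ is then immediate: $k$ being CM and $p$ odd, one has $\overline{U_k}=\overline{U_{k^+}}$ (the Hasse unit index and $\#\mu(k)$ are prime to $p$), so under complex conjugation the Leopoldt map for $k$ splits into the Leopoldt map for $k^+$ on the plus part, injective by (I2), and a map out of a finite group on the minus part, which is trivially injective.

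The core of the argument is to show that $k^+$ itself is $p$-rational. By (I2), $Y(k^+)\cong\Zfield_p\oplus tY(k^+)$, and the fixed field of $tY(k^+)$ is $\widetilde{k^+}=k^+_{\text{cyc}}$, so it suffices to prove $tY(k^+)=(0)$. The class field theory sequence relative to inertia, $0\to\overline{U_{k^+}}\to\bigoplus_{v\mid p}\overline{U_{(k^+)_v}}\to Y(k^+)\to A(k^+)\to 0$ (as in the proof of Proposition~\ref{p:30-5}; left-exactness uses Leopoldt for $k^+$), together with $A(k^+)=(0)$, shows that $Y(k^+)$ is generated by the inertia subgroups $I_v$ of the $p$-places $v$. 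The decisive point is that, because $(k^+)_v=\Qfield_p$, the module $\overline{U_{(k^+)_v}}$ has $\Zfield_p$-rank $1$, so $I_v$ is a quotient of a $\Zfield_p$-module of rank $1$; and at the same time $I_v$ maps onto the inertia of $v$ in $k^+_{\text{cyc}}/k^+$, which — again because $(k^+)_v=\Qfield_p$, so that the local cyclotomic $\Zfield_p$-extension is totally ramified of full degree — is all of $\Gal(k^+_{\text{cyc}}/k^+)\cong\Zfield_p$. A quotient of $\Zfield_p$ surjecting onto $\Zfield_p$ is $\Zfield_p$, hence $I_v\cong\Zfield_p$ and $I_v\cap tY(k^+)=(0)$. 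Consequently, writing $M$ for the maximal abelian pro-$p$ extension of $k^+$ unramified outside $p$, so that $\Gal(M/k^+)=Y(k^+)$, the extension $M/k^+_{\text{cyc}}$ is unramified at all $p$-places, hence everywhere; therefore $tY(k^+)=\Gal(M/k^+_{\text{cyc}})$ is a quotient of $X(k^+_{\text{cyc}})$, which is trivial by (I4). Thus $Y(k^+)\cong\Zfield_p$ and $k^+$ is $p$-rational.

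For the passage back down to $k$, by the Leopoldt statement already proved it remains to check that $Y(k)$ is $\Zfield_p$-free. The same inertia sequence and $A(k)=(0)$ give $Y(k)\cong\bigl(\bigoplus_{v\mid p}\overline{U_{k_v}}\bigr)/\overline{U_k}$, with $\bigoplus_{v\mid p}\overline{U_{k_v}}$ and $\overline{U_k}=\overline{U_{k^+}}$ both $\Zfield_p$-free; so $Y(k)$ is $\Zfield_p$-free iff $\overline{U_k}/p\to\bigoplus_{v\mid p}\overline{U_{k_v}}/p$ is injective. Since every $p$-place of $k$ lies over a $p$-place $v^+$ of $k^+$ with $k_v=(k^+)_{v^+}=\Qfield_p$, two of them over each $v^+$, and since $\overline{U_k}=\overline{U_{k^+}}$ consists of elements fixed by complex conjugation, this map has the same kernel as $\overline{U_{k^+}}/p\to\bigoplus_{v^+\mid p}\overline{U_{(k^+)_{v^+}}}/p$, which is injective because $k^+$ is $p$-rational — this being precisely the third clause of Proposition~\ref{p:30-5}(iv) for $k^+$, whose remaining clauses hold since $\mathrm{Hilb}_{k^+}=k^+\subseteq\widetilde{k^+}$ and $W_S(k^+)=(0)$. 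Hence $Y(k)$ is $\Zfield_p$-free and $k$ is $p$-rational.

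The hard part is the second step, and in it the claim that $M/k^+_{\text{cyc}}$ is unramified everywhere: this is exactly where hypothesis (I1) is indispensable, as it is the rank-$1$-ness of the local units at $p$ that forces $I_v\cong\Zfield_p$, and only then does (I4) finish the job. By contrast, the first and third steps are routine manipulations with the class-field-theory sequences of Proposition~\ref{p:30-5} and the CM structure of $k$; the points needing a little care are that Leopoldt for $k$ genuinely descends from (I2) and that ``doubling'' the $p$-places from $k^+$ to $k$ does not enlarge the kernel of the local unit map.
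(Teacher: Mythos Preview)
Your proof is correct and follows the same two-step skeleton as the paper's: first show $k^+$ is $p$-rational (via the fact that the maximal abelian $S$-ramified pro-$p$-extension of $k^+$ becomes unramified over $k^+_{\text{cyc}}$, so that (I4) kills $tY(k^+)$), then pass to $k$ using the unit criterion of Proposition~\ref{p:30-5} and the CM structure. The only difference is that the paper quotes a lemma of Ozaki--Taya (\cite{htay1}, lemma~2.3; equivalently \cite{ba_ng1}, lemma~3.1) for the unramifiedness of $M/k^+_{\text{cyc}}$, whereas you supply its proof directly by exploiting that (I1) makes each $\overline{U_{(k^+)_v}}\cong\Zfield_p$ so that $I_v\cong\Zfield_p$ --- which is precisely the content of that lemma here.
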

%%
%%proof
%%
\begin{proof} The usual notations $\Gamma$, $\Lambda,$ etc. (such as in the proof of proposition~\ref{p:30-5}) will pertain to the cyclotomic  extension $k^+_{\text{cyc}}$ of $k^+$. In particular, $Y(k^+_{\text{cyc}})=\varprojlim Y(k^+_n)$ and $X(k^+_{\text{cyc}})=\varprojlim A(k^+_n)$. We proceed in two steps:
%%
%%paragraph
%%
\paragraph{(a)}\emph{$k^+$ is $p$-rational}: Leopoldt's conjecture for $k^+$  implies obviously that $Y(k^+_{\text{cyc}})_{\Gamma}\cong tY(k^+)$. Let $M$ be the extension of $k^+_{\text{cyc}}$ s.t. $\Gal(M/k^+_{\text{cyc}})=Y(k^+_{\text{cyc}})_{\Gamma}$. Under condition~{\bfseries (I1)}, a result of Ozaki and Taya (\cite{htay1}, lemma~2.3; for another proof of an equivalent property , see \cite{ba_ng1}, lemma~3.1) asserts that $M/k^+_{\text{cyc}}$ is unramified so that  $M=k^+_{\text{cyc}}$ under {\bfseries (I4)}, hence $tY(k^+)$ is trivial , i.e. $k^+$ is $p$-rational.
%%
%%paragraph
%%
\paragraph{(b)} \emph{$k$ is $p$-rational}: We use the last criterion of proposition~\ref{p:30-5}. Condition {\bfseries (I3)} is stronger than the condition ${\rm Hilb}_k\subset \tilde{k}$. It remains only to study the injectivity of the diagonal map  $U_k/p\rightarrow \bigoplus_{v\in S}U_v/p$. The ``plus'' part is taken care of by the $p$-rationality of $k^+$. Concerning the ``minus'' part, just notice that ${U_k}^-$ consists of roots of unity because $k$ is CM, and the local roots of unity have order dividing $(p-1)$ because of the $p$-splitting condition {\bfseries (I1)}.
\end{proof}

As was recalled at the beginning of section~\ref{sec10}, taking a base field $k$ which is CM and verifies Itoh's conditions, Fujii constructed in \cite{sfuj1} a sequence of (uniquely defined) multiple $\Zfield_p$-extensions \begin{equation*}
k\subset K^{(1)}\subset\cdots \subset K^{(d)}\subset\cdots \tilde{k}\, , 
\end{equation*}
using the $p$-splitting condition {\bfseries (I1)} in a crucial manner. Note that Fujii's $K^{(2)}$ does not contain $k^{\text{cyc}}$. In \cite{sfuj1}, step 1 of section~3, $X(K^{(1)})$ is shown to be trivial. In step 2, the pseudo-nullity of $X(K^{(2)})$ is proved  in the following way: let $L_2$ be the maximal abelian pro-$p$-extension of $K^{(2)})$ which is abelian over $K^{(1)}$, so that $\Gal(L_2/K^{(2)})\cong X(K^{(2)})_{\Gamma}$, where $\Gamma=\Gal(K^{(2)}/K^{(1)})$. Let $K^{(2)}=\cup K_m$. By hard class field theoritic calculations, Fujii shows that for $m$ large, the extension  $L_2/K_m$ is abelian of rank $2$, which means that $X(K^{(2)})_{\Gamma}$ is finite , and the sufficient criterion in lemma ~\ref{l:5-5} can be applied. But note that Fujii's result and ours here are of a different nature  because $k^{\text{cyc}}$ does not play the same role in the two settings.\medskip

\emph{Aknowledgements}. My hearty thanks to my colleague A. Movahhedi for many useful exchanges.
\renewcommand\refname{\normalsize References \textnormal{(intentionnally restricted to the commutative setting)}}

\bigskip

Universit\'e de Franche Comt\'e\\
CNRS UMR 6623\\
25030 Besançon Cedex France

\begin{thebibliography}{**}
\bibitem[AN]{as_ng1} J. {\sc Assim}, T. {\sc Nguyen Quang Do}: Sur la constante de Kummer-Leopoldt d'un corps de nombres, \emph{Manuscripta Math.}, {bfseries 115} ((2004), 55-72.
\bibitem[BN]{ba_ng1} R. {\sc Badino}, T. {\sc Nguyen Quang Do}: Sur les \'egalit\'es du miroir et certaines formes faibles de la conjecture de Greenberg, \emph{Manuscripta Math.}, {\bfseries 116} (2005), 323-340.
\bibitem[BBL]{ba_ba_lo1}A. {\sc Bandini}, F. {\sc Bars}, I. {\sc Longhi}: Characteristic ideals and Iwasawa Theory, \emph{New York J. Math.} {\bfseries 20} (2014), 759-778.
\bibitem[FGS]{fe_gr1}L. J. {\sc Federer}, B. H. {\sc Gross} (with an appendix by W. Sinnott): Regulators and Iwasawa modules, \emph{Inv. Math.}, {\bfseries 62} (1981), 443-457.
\bibitem[FN]{fl_ng1} V. {\sc Fleckinger}, T. {\sc Nguyen Quang Do}: Bases normales, unit\'es et conjecture faible de Leopoldt, \emph{Manuscripta Math.}, {\bfseries 71} (1991), 183-195.
\bibitem[Fu]{sfuj1} S. {\sc Fujii}: On Greenberg's generalized conjecture for CM-fields, \emph{J. reine angew. Math.}, {\bfseries 731} (2017), 259-278.
\bibitem[Grb1]{rgre1} R. {\sc Greenberg}: On the Iwasawa invariants of totally real number fields, \emph{Amer. J. Math.}, {\bfseries 98} (1976), 263-284.,
\bibitem[Grb2]{rgre2} R. {\sc Greenberg}: Iwasawa theory-past and present, \emph{Adv. Stud. Pure Math.}, {\bfseries 30}, Math. Soc. Japan, Tokyo (2001), 335-385.
\bibitem[Grb3]{rgre3} R. {\sc Greenberg}: Galois representations with open image, \emph{Ann. Math. Qu\'ebec}, {\bfseries 40}, 1 (2016), 83-119.
\bibitem[It]{tito1} T. {\sc Itoh}: On multiple $\Zfield_p$-extensions of imaginary abelian quartic fields, \emph{J. Number Theory}, {\bfseries 131} (2011), 59-66.
\bibitem[Iw]{kiwa2} K. {\sc Iwasawa}: On $\Zfield_{\ell}$-extensions of algebraic number fields, \emph{Ann. of Math.}, {\bfseries 98} (1973), 246-326.
\bibitem[Jan]{ujan1} U. {\sc Jannsen}: Iwasawa modules up to isomorphism, in\emph{Algebraic Number Theory- in honor of K. Iwasawa}, \emph{Adv. Studies in pure Math.}, {\bfseries 17} (1989), 117-207.
\bibitem[Kata]{tkat1} T. {\sc Kataoka}: A consequence of Greenberg's generalized conjecture on Iwasawa invariants of $\Zfield_p$-extensions, \emph{J. of Number Theory}, {\bfseries 172} (2017), 200-233.
\bibitem[Kato]{kkat1} K. {\sc Kato}: On universal norms of $p$-units in some non-commutative Galois extensions, \emph{Doc. Math.} extra vol. (2006), 551-565.
\bibitem[Kle]{skle1} S. {\sc Kleine}: Relative extensions of number fields and Greenberg's generalized conjecture, \emph{Acta Arith.}, {\bfseries 174}, 4 (2016), 367-392.
\bibitem[Ko]{mkol2} M. {\sc Kolster}: Remarks on \'etale $K$-theory and Leopoldt conjecture, \emph{Sem. Th\'eorie des Nombres}, Paris (1992-1993), Birkha\"user Progress in Math. {\bfseries 116}, 37-62. 
\bibitem[Kur]{mkur1} M. {\sc Kurihara}, Some remarks on conjectures about cyclotomic fields and $K$-groups of $\Zfield$, \emph{Compositio Math.}, {\bfseries 81}, 2 (1992), 223-236.
\bibitem[Kuz]{ikuz1} I. V. {\sc Kuz'min}: The Tate module for algebraic number fields; \emph{Math. USSR Izv.}, {\bfseries 6}, 2 (1972), 263-321.
\bibitem[LMN]{fl_mo_ng1} M.  {\sc Le Floc'h}, A. {\sc Movahhedi}, T. {\sc Nguyen Quang Do}: On capitulation cokernels in Iwasawa theory, \emph{Amer. J. of Math.}, {\bfseries 127} (2005), 851-877.
\bibitem[MS]{mo_sh1} W.  {\sc McCallum}, R.  {\sc Sharifi}: A cup-product in the Galois Cohomology of number fields, \emph{Duke Math. J.}, {\bfseries 120}, 2 (2003), 269-310.
\bibitem[Mi]{jmin1} J. {\sc Minardi}: Iwasawa modules for $\Zfield^d_p$-extensions of algebraic number fields, \emph{Thesis}, Univ .of Washington (1986).
\bibitem[Mo]{amov1} A {\sc Movahhedi}: Sur les $p$-extensions des corps $p$-rationnels, \emph{Math. Nachr.}, {\bfseries 149} (1990), 163-176.
\bibitem[MN]{mo_ng1} A. {\sc Movahhedi}, T. {\sc Nguyen Quang Do}: Sur l'arithm\'etique des corps de nombres  $p$-rationnels, \emph{S\'em. Th\'eorie des Nombres , Paris 1987-1988}, Birkh\"auser Progress in Math. {\bfseries 81}, 155-200.
\bibitem[N1]{tngu1} T. {\sc Nguyen Quang Do}: Sur la $\Zfield_p$-torsion de certain modules Galoisiens, \emph{Ann. Inst. Fourier}, {\bfseries 36} (1986), 27-46.
\bibitem[N2]{tngu2} T. {\sc Nguyen Quang Do}: Sur la torsion de certains modules galoisiens II, \emph{S\'em. Th\'eorie des Nombres Paris 1986-87}, Birkha\"user Progress in Math. {\bfseries 75}, 271-297.
\bibitem[N3]{tngu3} T. {\sc Nguyen Quang Do}: Analogues sup\'erieurs du noyau sauvage, \emph{J. Th\'eorie de Nombres Bordeaux}, {\bfseries 4}, 2 (1992), 263-271.
\bibitem[N4]{tngu4} T. {\sc Nguyen Quang Do}: Formules de genres et conjecture de Greenberg, \emph{Ann. Math. Qu\'ebec}, {\bfseries 42}, 1 (2018), 267-280.
\bibitem[N5]{tngu5} T. {\sc Nguyen Quang Do}: Sur une forme faible de la conjecture de Greenberg II, \emph{Int. J. of Numb. Th.}, {\bfseries 13}:04 (2017), 1061-1070.
\bibitem[NLB]{ng_le1} T. {\sc Nguyen Quang Do}, M. {\sc Lescop} (with an appendix by J.-R. {\sc Belliard}): Iwasawa descent and co-descent for units modulo circular units, \emph{Pure Appl. Math. Quat.}, {\bfseries 2}, 2 (2006), 465-496. 
\bibitem[NN]{ng_ni1} T. {\sc Nguyen Quang Do}, V. {\sc Nicolas}: Nombres de Weil, sommes de Gauss et annulateurs Galoisiens, \emph{Amer. J. of Math.}, {\bfseries 133}, 6 (2011), 1533-1573.
\bibitem[NV]{ng_va1} T. {\sc Nguyen Quang Do}, D. {\sc Vauclair}: $K_2$ et conjecture de Greenberg dans les $\Zfield_p$-extensions multiples, \emph{J. Th\'eorie des Nombres Bordeaux},  {\bfseries 17}, (2005), 669-688.
\bibitem[NSW]{ne_sc_wi1} J. {\sc Neukirch}, A. {\sc Schmidt}, K. {\sc Wingberg}: \emph{Cohomologie of Number Fields}, Springer Grund. der math. Wissen. vol. 323 (2008).
\bibitem[Oza]{moza1} M. {\sc Ozaki}: The class group of $\Zfield_p$-extensions over totally real number fields,  \emph{Tohoku Math. J.}, {\bfseries 49} (1997), 431-435.
\bibitem[Sc]{psch1} P. {\sc Schneider}: \"Uber gewisse Galoiscohomologiegruppen, \emph{Math. Z.}, {\bfseries 168} (1979), 181-205.
\bibitem[T]{htay1} H. {\sc Taya}: On $p$-adic zeta functions and $\Zfield_p$-extensions of certain totally real fields, \emph{Tohoku Math. J.}, {\bfseries 51} (1999), 21-33.
\bibitem[W]{lwas1} L. {\sc Washington}: \emph{Introduction to cyclotomic fields}, Springer GTM {\bfseries 83} (1997).
\end{thebibliography}
\end{document}